\DeclareMathOperator*{\argmin}{argmin}   
\newtheorem{theorem}{Theorem}
\newtheorem{lemma}[theorem]{Lemma}
\newtheorem{corollary}[theorem]{Corollary}
\newtheorem{proposition}[theorem]{Proposition}
\theoremstyle{definition}
\newtheorem{definition}{Definition}
\newtheorem{assumption}{Assumption}
\newtheorem{example}{Example}
\newtheorem*{remark}{Remark}
\newacronym{acr:dro}{DRO}{Distributionally Robust Optimization}
\newacronym{acr:drc}{DRC}{Distributionally Robust Control}
\newacronym{acr:dup}{DUP}{Distributional Uncertainty Propagation}
\newacronym{acr:kl}{KL}{Kullback-Leibler}
\newacronym{acr:tv}{TV}{Total Variation}
\newacronym{acr:lti}{LTI}{linear time-invariant}
\newacronym{acr:mmd}{MMD}{Maximum Mean Discrepancy}
\newacronym{acr:ols}{OLS}{ordinary least squares}
\newacronym{acr:wrls}{WRLS}{weighted and ridge-regularized least squares}
\newacronym{acr:ot}{OT}{Optimal Transport}
\newacronym{acr:cvar}{CVaR}{Conditional Value at Risk}
\newcommand{\reals}{\mathbb{R}}
\newcommand{\nonnegativeReals}{\mathbb{R}_{\geq 0}}
\newcommand{\naturals}{\mathbb{N}}
\newcommand{\Id}{\mathrm{Id}}
\renewcommand{\d}{\mathrm{d}}
\DeclareMathOperator*{\range}{Range}
\DeclareMathOperator*{\kernel}{Ker}
\newcommand{\ones}{\mathbbm{1}}
\newcommand{\eye}{I}
\newcommand{\innerProduct}[2]{\langle #1,#2\rangle}
\newcommand{\adjoint}[1]{#1^\ast}
\newcommand{\transpose}[1]{#1^\top}
\newcommand{\T}[1]{\transpose{#1}}
\newcommand{\inverse}[1]{#1^{-1}}
\newcommand{\inv}[1]{\inverse{#1}}
\newcommand{\pseudoinverse}[1]{#1^\dagger}
\newcommand{\pinv}[1]{\pseudoinverse{#1}}
\newcommand{\sample}[2]{\widehat{#1}^{(#2)}}
\newcommand{\entry}[2]{#1_{#2}}
\newcommand{\Q}{\mathbb{Q}}
\renewcommand{\P}{\mathbb{P}}
\newcommand{\expectedValue}[2]{\mathbb{E}_{#1}\left[#2\right]}
\newcommand{\diracDelta}[1]{\delta_{#1}}
\newcommand{\support}[1]{\text{supp}(#1)}
\newcommand{\tensorProd}[2]{#1\times #2}
\newcommand{\proj}[1]{\pi_{#1}}
\newcommand{\indicator}[1]{\ones_{#1}}
\newcommand{\setPlans}[2]{\Gamma(#1,#2)}
\newcommand{\probSpace}[1]{\mathcal{P}(#1)}
\newcommand{\Pp}[2]{\mathcal{P}_{#1}(#2)}
\newcommand{\pushforward}[1]{{#1}_{\#}}
\newcommand{\transportCost}[3]{W_{#1}(#2,#3)}
\newcommand{\ball}[3]{\mathbb{B}_{#1}^{#2}(#3)}
\let\argmin\relax
\DeclareMathOperator*{\argmin}{arg\,min}
\DeclareMathOperator*{\st}{s.t.}
\DeclareMathOperator{\cvar}{CVaR}
\newcommand{\DS}{\displaystyle}
\begin{document}
\title{Distributional Uncertainty Propagation via Optimal Transport}
\author{Liviu Aolaritei$^{* \mathsection}$, Nicolas Lanzetti$^*$, Hongruyu Chen, and Florian D\"orfler
\thanks{$^*$: Equal contribution; $^\mathsection$: Corresponding author.}
\thanks{This work was supported by the Swiss National Science Foundation under NCCR Automation, grant agreement 51NF40\_180545.}
\thanks{The authors are with the Automatic Control Laboratory, Department of Electrical Engineering and Information Technology at ETH Z\"urich, Switzerland {\tt \{aliviu,lnicolas,hongrchen,dorfler\}@ethz.ch}.}%
}

\maketitle

\begin{abstract}
This paper addresses the limitations of standard uncertainty models, e.g., robust (norm-bounded) and stochastic (one fixed distribution, e.g., Gaussian), and proposes to model uncertainty via Optimal Transport (OT) ambiguity sets. These constitute a very rich uncertainty model, which enjoys many desirable geometrical, statistical, and computational properties, and which: (1) naturally generalizes both robust and stochastic models, and (2) captures many additional real-world uncertainty phenomena (e.g., black swan events). Our contributions show that OT ambiguity sets are also analytically tractable: they propagate easily and intuitively through linear and nonlinear (possibly corrupted by noise) transformations, and the result of the propagation is again an OT ambiguity set or can be tightly upper bounded by an OT ambiguity set. In the context of dynamical systems, our results allow us to consider multiple sources of uncertainty (e.g., initial condition, additive noise, multiplicative noise) and to capture in closed-form, via an OT ambiguity set, the resulting uncertainty in the state at any future time. Our results are actionable, interpretable, and readily employable in a great variety of computationally tractable control and estimation formulations. To highlight this, we study three applications in trajectory planning, consensus algorithms, and least squares estimation. We conclude the paper with a list of exciting open problems enabled by our results.
\end{abstract}

\begin{IEEEkeywords}
uncertainty propagation, optimal transport, distributionally robust control, stochastic optimization.
\end{IEEEkeywords}

\section{Introduction}
\label{sec:introduction}


Uncertainty modeling is a fundamental step in any decision-making problem across all areas of science and engineering. Designing a good uncertainty model is a very challenging problem on its own, due to the many aspects and trade-offs that need to be taken into consideration. From a practical standpoint, the uncertainty model should be \emph{expressive} enough to capture relevant uncertainty behaviors present in real-world systems, while avoiding being too `coarse' and leading to overly conservative decisions. From a theoretical standpoint, the expressivity should not compromise the \emph{analytical} (e.g., propagate in closed-form through dynamical systems) and \emph{computational} (i.e., lead to tractable decision-making) properties of the model. Additionally, it is highly desirable to have a unique uncertainty model, which enables a \emph{unified} study of decision-making problems.

To further complicate the modeling process, the modeler often has access only to partial statistical information about the uncertainty. For example, the \emph{support} of the uncertainty might be known (e.g., from first principles), but not the likelihood of its possible realizations. Alternatively, a limited number of \emph{samples} from the uncertainty might be available (e.g., in data-driven scenarios), but no information on the support. Finally, the probability distribution of the uncertainty might be available at present, but no information about future \emph{distribution shifts}. In all these cases, one is confronted with \emph{distributional uncertainty}, whereby not only is the system affected by uncertainty, but also the underlying probability distribution of the uncertainty is unknown and only partially observable. All these aspects cannot be addressed in a unifying manner by either the robust (i.e., norm-bounded uncertainty) or stochastic (i.e., one fixed distribution, e.g., an empirical or Gaussian distribution) models, and call for a \emph{distributionally robust} uncertainty description.

In this paper, we propose to model uncertainty via \emph{\gls{acr:ot} ambiguity sets}. OT ambiguity sets are balls of probability distributions constructed using an OT distance (e.g., the celebrated \emph{Wasserstein} distance), and centered at a reference probability distribution. These constitute a very rich uncertainty model, which generalizes both the robust and stochastic models. Moreover, they capture many additional real-world uncertainty phenomena, such as (specific types of) \emph{black swan events}, i.e., unpredictable rare events with dramatic consequences (see~\cref{subsec:properties:OT}). Finally, their inherent distributionally robust formulation naturally captures many classes of distribution shifts. All these properties highlight the expressivity of \gls{acr:ot} ambiguity sets.

So far, \gls{acr:ot} ambiguity sets have been primarily employed in~\emph{\gls{acr:dro}} problems, with many recent papers showing that this approach leads to computationally tractable formulations that successfully robustify state-of-the-art data-driven optimization and machine learning models; see \cite{Peyman2018,sinha2017certifying,Blanchet2019,gao2022finite,shafieezadeh2023new} and references therein. Recently, such methodology has also spread in control, establishing an exciting new direction in (OT-based) \emph{\gls{acr:drc}}. Existing work has been primarily focused on optimal control and MPC \cite{yang2020wasserstein,aolaritei2023wasserstein,zhong2023efficient,micheli2022data,fochesato2022data,mark2021data,coulson2021distributionally,navsalkar2023data,kim2023distributional,mcallister2023inherent,zolanvari2023iterative}, estimation and filtering \cite{shafieezadeh2018wasserstein,wang2022distributionally,han2023distributionally,lotidis2023wasserstein,brouillon2023regularization}, and uncertainty quantification \cite{aolaritei2023capture,aolaritei2022uncertainty,boskos2023high,boskos2020data}. For non-OT-based works, see \cite{VanParys2016,coppens2021data,schuurmans2023general,dixit2022distributionally,romao2023distributionally} and references therein.

Differently from \gls{acr:dro}, which is a static optimization problem, \gls{acr:drc} is a \emph{dynamic} problem involving the interplay between a stochastic dynamical system and a real-time distributionally robust (optimal) control algorithm. Due to its dynamic nature, a key challenge in developing and analysing \gls{acr:drc} methods is represented by the ability to predict the future behavior of the system. This, in turn, is accomplished by understanding how OT ambiguity sets propagate through the system dynamics.

In this paper, we address this problem in the context of both linear and nonlinear stochastic dynamical systems, with additive or multiplicative uncertainty.
To do so, we take a step back and start by studying how OT ambiguity sets propagate through (i) arbitrary deterministic maps $f(x)$ (which boil down to $f(x) = Ax$ in the linear case), and (ii) stochastic maps of the form $f(x) = x+y$ or $f(x) = x \cdot y$ (i.e., pointwise sum and product), for an uncertain random vector $y$. This is done in~\cref{sec:propagation,sec:additive:multiplicative}, respectively. We then specialize our results to stochastic dynamical systems and \gls{acr:drc} problems in Section~\ref{sec:applications}. The proposed framework is coined (OT-based) \emph{\gls{acr:dup}}, which we believe to be the fundamental link between static \gls{acr:dro} and dynamic \gls{acr:drc} problems (see \cref{fig:DUP}).

Our main contributions can be summarized as follows:

\begin{enumerate}
    \item[0)] \textbf{Expressivity of OT ambiguity sets}. We present a tutorial on the modeling power of OT ambiguity sets; see Section~\ref{sec:OT}.

    \smallskip

    \item[1)] \textbf{\gls{acr:dup} via deterministic nonlinear maps}. In~\cref{thm:nonlin:trans}, we study the propagation of \gls{acr:ot} ambiguity sets through both arbitrary and structured (bijective, injective, and surjective) nonlinear maps $f$. First, we show that \gls{acr:ot} ambiguity sets are \emph{exactly} (in closed-form) propagated through bijective maps, and they are \emph{closed} under propagation (i.e., the result of the propagation is again an \gls{acr:ot} ambiguity set). These properties are crucial features of \gls{acr:ot} ambiguity sets, which highlight their advantage over other stochastic uncertainty models, such as standard stochastic models consisting of a fixed distribution (e.g., Gaussian) or other types of distributionally robust descriptions (e.g., moment ambiguity sets). Secondly, we show that in the remaining cases, the result of the propagation can be upper bounded by an \gls{acr:ot} ambiguity set. 

    \smallskip

    \item[2)] \textbf{\gls{acr:dup} via deterministic linear maps}. In~\cref{thm:lin:trans}, we restrict our attention to linear maps and prove a chain of inclusions that strenghtens~\cref{thm:nonlin:trans} (when restricted to linear transformations). Moreover, we show that for linear maps the exactness and closedness of the propagation can be further extended from bijective to surjective maps.

    \smallskip

    \item[3)] \textbf{\gls{acr:dup} via stochastic additive and multiplicative maps}. In~\cref{thm:conv:trans,thm:Hadamard}, we study the convolution and Hadamard product of two \gls{acr:ot} ambiguity sets. These two probabilistic operations correspond to the pointwise sum and product of two uncertain random vectors, respectively. In both cases, we show that the result of the convolution can be upper-bounded by another \gls{acr:ot} ambiguity set. 

    \smallskip

    \item[4)] \textbf{Applications}. In~\cref{sec:applications}, we employ our \gls{acr:dup} results in the context of control and estimation problems. Specifically, we start by employing the results in 1)-3) for discrete-time \gls{acr:lti} systems with (i) uncertain initial condition, (ii) additive uncertainty, and (iii) multiplicative uncertainty. In all three cases, we show that the distributional uncertainty in the state can be captured in closed-form by an \gls{acr:ot} ambiguity set. We then study three distinct applications: trajectory planning, consensus, and least squares estimation. We conclude the paper with preliminary results for nonlinear systems and a discussion on future research directions.
\end{enumerate}

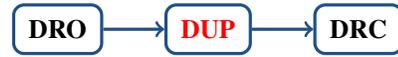
\begin{figure}[t]
    \centering
    \begin{tikzpicture}
        \definecolor{mycolor}{RGB}{30,81,136}
        \node[rectangle,rounded corners,draw=mycolor,very thick,inner sep=0.2cm] (a) at (-2,0) {\textbf{DRO}}; 
        \node[rectangle,rounded corners,draw=mycolor,very thick,inner sep=0.2cm] (b) at (0,0) {\color{red}{\textbf{DUP}}}; 
        \node[rectangle,rounded corners,draw=mycolor,very thick,inner sep=0.2cm] (c) at (2,0) {\textbf{DRC}}; 
        \draw[->,color=mycolor,very thick] (a) -- (b); 
        \draw[->,color=mycolor,very thick] (b) -- (c); 
    \end{tikzpicture}
    \caption{\gls{acr:dup} enables the use of static \gls{acr:dro} approaches in dynamic \gls{acr:drc} problems.}
    \label{fig:DUP}
\end{figure}

The \gls{acr:dup} results show that \gls{acr:ot} ambiguity sets are \emph{analytically tractable}, and constitute a promising well-posed uncertainty model for dynamical systems and control. Specifically, they allow to consider multiple sources of distributional uncertainty (e.g., from initial condition, additive noise, multiplicative noise) and capture via \gls{acr:ot} ambiguity sets the resulting distributional uncertainty in the state at any future time step. Moreover, our results are exact for a great variety of practical cases. 

For linear systems (and certain classes of nonlinear systems), the resulting \gls{acr:ot} ambiguity sets are also \emph{computationally tractable}, and can be directly employed in various \gls{acr:drc} formulations which can be reformulated as tractable convex programs using, for example, the \gls{acr:dro} results in \cite{shafieezadeh2023new} for many cases of practical interest. An initial effort in this direction has been made in \cite{aolaritei2023wasserstein}, where the \gls{acr:dup} framework has been exploited to formulate a Wasserstein Tube MPC which: (i) is a direct generalization of the deterministic Tube MPC to the stochastic setting, (ii) is recursively feasible, and (iii) can achieve an optimal trade-off between safety and performance.

Aside from the computational advantages, we envision that the \gls{acr:dup} framework can be directly employed in the analysis of the closed-loop properties (e.g., invariance, stability) of systems affected by distributional uncertainty.

\subsection{Mathematical Notation and Preliminaries}

Throughout the paper, whenever we use $\mathcal X$ and $\mathcal Y$ we implicitly assume that they are Polish spaces, i.e., separable and completely metrizable topological spaces (e.g., $\reals^n$ and $L^p$ spaces with $1\leq p<+\infty$). We denote by $\probSpace{\mathcal{X}}$ the space of Borel probability distributions on $\mathcal{X}$. The support of a distribution $\P\in\probSpace{\mathcal{X}}$ is denoted by $\support{\P}\subset \mathcal X$, and the delta distribution at $x\in\mathcal{X}$ is denoted by $\diracDelta{x}$. Given $\P, \Q \in \mathcal P(\mathcal X)$, we denote by $\P \otimes \Q$ their product distribution. 
We denote by $x \sim \P$ the fact that $x$ is distributed according to $\P$. 
We implicitly assume that all maps $f:\mathcal X \to \mathcal Y$ are Borel. Projection maps are denoted by $\pi$, and the identity map on $\mathcal X$ is $\Id_\mathcal{X}$ (or simply $\Id$ whenever the context is clear). The indicator function of the set $\mathcal A$ is denoted by $\indicator{\mathcal A}$. Finally, given $x,y \in \reals^n$, $x \cdot y$ denotes their Hadamard (or pointwise) product, defined element-wise by $(x \cdot y)_i = x_i y_i$ for $i\in [n]:=\{1,\ldots,n\}$.

We focus on three classes of transformations of probability distributions: \emph{pushforward} via a transformation, \emph{convolution} with another distribution, and \emph{Hadamard product} with another distribution. We start by defining the pushforward.

\begin{definition}
\label{def:pushforwad}
Let $\P\in\probSpace{\mathcal{X}}$ and $f:\mathcal{X}\to\mathcal{Y}$. Then, the pushforward of $\P$ via $f$ is denoted by $\pushforward{f}\P$, and is defined as $(\pushforward{f}\P)(\mathcal A)\coloneqq \P(f^{-1}(\mathcal A))$, for all Borel sets $\mathcal A\subset\mathcal{Y}$.
\end{definition}

\cref{def:pushforwad} says that if $x\sim\P$, then $\pushforward{f}\P$ is the probability distribution of the random variable $y=f(x)$. 
We now consider $\mathcal X=\reals^n$ and define the convolution and the Hadamard product.

\begin{definition}
\label{def:convolution}
Let $\P, \Q\in\probSpace{\reals^n}$. The convolution $\P\ast\Q\in\probSpace{\reals^n}$ is defined by
$
    (\P\ast\Q)(\mathcal A)
    \coloneqq 
    \int_{\reals^n\times\reals^n}\indicator{\mathcal A}(x+y)\d(\P\otimes\Q)(x,y), 
$
for all Borel sets $\mathcal A\subset\reals^n$.
\end{definition}

\begin{definition}
\label{def:Hadamard}
Let $\P, \Q\in\probSpace{\reals^n}$. The Hadamard product $\P\odot\Q\in\probSpace{\reals^n}$ is defined by
$
    (\P\odot\Q)(\mathcal A)
    \coloneqq 
    \int_{\reals^n\times\reals^n}\indicator{\mathcal A}(x\cdot y)\d(\P\otimes\Q)(x,y), 
$
for all Borel sets $\mathcal A\subset\reals^n$.
\end{definition}

The convolution captures the sum of independent random variables: if $x \sim \P$ and $y \sim \Q$ are independent, then $x+y$ is distributed according to $\P\ast\Q$.
Similarly, the Hadamard product captures element-wise multiplication of independent random variable: if $x \sim \P$ and $y \sim \Q$ are independent, then $x \cdot y$ is distributed according to $\P\odot\Q$.
We exemplify these three definitions in the case of empirical probability measures: 

\begin{example}
\label{ex:pushforwad:empirical}
Let $\P=\frac{1}{N}\sum_{i=1}^N\diracDelta{\sample{x}{i}}$ and $\Q=\frac{1}{M}\sum_{j=1}^M\diracDelta{\sample{y}{j}}$ be empirical distributions with $N$ and $M$ samples.
Then, $\pushforward{f}\P{}=\frac{1}{N}\sum_{i=1}^N\delta_{f(\sample{x}{i})}$, 
$\P\ast\Q = \frac{1}{NM}\sum_{i=1}^N\sum_{j=1}^M\diracDelta{\sample{x}{i}+\sample{y}{j}}$, and
$\P\odot\Q = \frac{1}{NM}\sum_{i=1}^N\sum_{j=1}^M\diracDelta{\sample{x}{i}\cdot\sample{y}{j}}$. In particular, all distributions are empirical and supported on the propagated samples.
\end{example}


\section{Optimal Transport Ambiguity Sets}
\label{sec:OT}

\begin{figure}[t]
    \centering
    \includegraphics[width=0.63\linewidth]{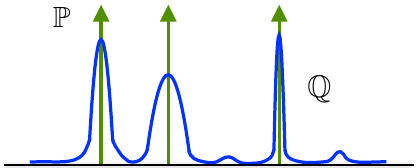}
    \caption{For appropriate $c$ and $\varepsilon$, $\mathbb Q$ is an $\varepsilon$-perturbation of the empirical distribution $\P$, and belongs to the ambiguity set $\ball{\varepsilon}{c}{\P}$.}
    \label{fig:Q}
\end{figure}

In this section, we formally define \gls{acr:ot} ambiguity sets and explore their modeling power for uncertainty quantification. Consider a non-negative lower semi-continuous function $c:\mathcal X\times \mathcal X\to\nonnegativeReals$ (henceforth, referred to as \emph{transportation cost}) and two probability distributions $\P,\Q\in\probSpace{\mathcal X}$. Then, the \emph{\gls{acr:ot} discrepancy} between $\P$ and $\Q$ is defined by  
\begin{equation}\label{eq:otcost}
    \transportCost{c}{\P}{\Q}
    \coloneqq 
    \inf_{\gamma\in\setPlans{\P}{\Q}}\int_{\mathcal X\times \mathcal X}c(x_1,x_2)\d\gamma(x_1,x_2),
\end{equation}
where $\setPlans{\P}{\Q}$ is the set of all probability distributions over $\mathcal X\times\mathcal X$ with marginals $\P$ and $\Q$, often called \emph{transport plans} or \emph{couplings}~\cite{Villani2009a}. The semantics are as follows: we seek the minimum cost to transport the probability distribution $\P$ onto the probability distribution $\Q$ when transporting a unit of mass from $x_1$ to $x_2$ costs $c(x_1,x_2)$. If $c(x_1,x_2) = \|x_1-x_2\|^p$, for some norm $\|\cdot\|$ on $\reals^d$, $(W_c)^{1/p}$ boils down to the celebrated (type-$p$) \emph{Wasserstein} distance. Intuitively, $\transportCost{c}{\P}{\Q}$ quantifies the discrepancy between $\P$ and $\Q$, and it naturally provides us with a definition of ambiguity in the space of probability distributions (see, e.g., \cref{fig:Q}). In particular, the \emph{\gls{acr:ot} ambiguity set} of radius $\varepsilon$ and centered at $\P$ is defined as 
\begin{equation}\label{eq:otambiguityset}
    \ball{\varepsilon}{c}{\P}
    \coloneqq 
    \{\Q\in\probSpace{\mathcal X}: \transportCost{c}{\P}{\Q}\leq\varepsilon\}
    \subset\probSpace{\mathcal X}.
\end{equation}
In words, $\ball{\varepsilon}{c}{\P}$ contains all probability distributions onto which $\P$ can be transported with a budget of at most $\varepsilon$. This includes both continuous and discrete distributions, distributions not concentrated on the support of $\P$, and even distributions whose mass asymptotically escapes to infinity, as shown next.
\begin{example}
Let $\mathcal X=\reals$, $c(x_1,x_2)=|x_1-x_2|^2$, $\P=\delta_0$, and let $\Q$ be the Gaussian distribution with mean $0$ and variance $\varepsilon$. Then, $\transportCost{c}{\diracDelta{0}}{\Q} = \mathbb E_{\Q}\left[|x|^2  \right]=\varepsilon$. Moreover, $\transportCost{c}{\diracDelta{0}}{\diracDelta{\sqrt{\varepsilon}}}=\varepsilon$, and $\transportCost{c}{\diracDelta{0}}{{\frac{\varepsilon}{n^2}}\diracDelta{n}+(1-{\frac{\varepsilon}{n^2}})\diracDelta{0}}=\varepsilon$ for all $n\in\naturals_{\geq\sqrt{\varepsilon}}$.
\end{example}
These properties cease to hold if the discrepancy between probability distributions is measured via the Kullback-Leibler (KL) divergence or Total Variation (TV) distance \cite{gibbs2002choosing}. We conclude this subsection by highlighting that \gls{acr:ot} ambiguity sets are well-behaved under monotone changes in $c$ and $\varepsilon$.
\begin{lemma}
\label{lemma:ambiguitysetmonotone}
Let $\P\in\probSpace{\mathcal X}$, $c, c_1, c_2$ be transportation costs over $\mathcal X \times \mathcal X$, and $\varepsilon,\varepsilon_1, \varepsilon_2>0$.
\begin{enumerate}
    \item If $\varepsilon_1\leq \varepsilon_2$ then $\ball{\varepsilon_1}{c}{\P}\subseteq\ball{\varepsilon_2}{c}{\P}$;
    \item If $c_1\leq c_2$, then $\ball{\varepsilon}{c_2}{\P}\subseteq\ball{\varepsilon}{c_1}{\P}$.
\end{enumerate}
\end{lemma}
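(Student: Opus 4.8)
The plan is to prove both inclusions directly from the definition of the \gls{acr:ot} discrepancy in~\eqref{eq:otcost} and of the ambiguity set in~\eqref{eq:otambiguityset}, establishing in each case that membership in the smaller set implies membership in the larger one. Both statements are elementary monotonicity facts, so the argument will be short; the only point worth isolating is that the feasible set of couplings $\setPlans{\P}{\Q}$ depends solely on the marginals $\P$ and $\Q$, and not on the transportation cost, which is exactly what makes the comparison in the second claim go through cleanly.

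For the first claim, I would fix an arbitrary $\Q\in\ball{\varepsilon_1}{c}{\P}$. By definition this means $\transportCost{c}{\P}{\Q}\leq\varepsilon_1$, and chaining this with the hypothesis $\varepsilon_1\leq\varepsilon_2$ gives $\transportCost{c}{\P}{\Q}\leq\varepsilon_2$, i.e., $\Q\in\ball{\varepsilon_2}{c}{\P}$. Since $\Q$ was arbitrary, the inclusion $\ball{\varepsilon_1}{c}{\P}\subseteq\ball{\varepsilon_2}{c}{\P}$ follows. This step uses nothing beyond transitivity of the order on $\nonnegativeReals$.

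For the second claim, the key step is to show that $c_1\leq c_2$ (pointwise on $\mathcal X\times\mathcal X$) lifts to $\transportCost{c_1}{\P}{\Q}\leq\transportCost{c_2}{\P}{\Q}$ for every pair $\P,\Q$. To this end I would observe that, for any fixed coupling $\gamma\in\setPlans{\P}{\Q}$, monotonicity of the Lebesgue integral yields $\int_{\mathcal X\times\mathcal X}c_1\,\d\gamma\leq\int_{\mathcal X\times\mathcal X}c_2\,\d\gamma$. Taking the infimum over the \emph{same} set $\setPlans{\P}{\Q}$ on both sides preserves the inequality, giving the claimed bound on the discrepancies. Consequently, if $\Q\in\ball{\varepsilon}{c_2}{\P}$, then $\transportCost{c_1}{\P}{\Q}\leq\transportCost{c_2}{\P}{\Q}\leq\varepsilon$, so $\Q\in\ball{\varepsilon}{c_1}{\P}$, which establishes $\ball{\varepsilon}{c_2}{\P}\subseteq\ball{\varepsilon}{c_1}{\P}$.

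I do not anticipate any genuine obstacle here, as both parts reduce to order-preservation under, respectively, transitivity and integration against a fixed measure. The one place to be slightly careful is the interchange of the pointwise cost inequality with the infimum over couplings in the second claim: this is valid precisely because $\setPlans{\P}{\Q}$ is independent of the cost, so both infima range over an identical feasible set and the inequality holds termwise before passing to the infimum.
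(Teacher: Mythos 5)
Your proof is correct: the paper states this lemma without proof (treating it as elementary), and your argument is exactly the standard one implicitly intended — transitivity of the order for the radius claim, and pointwise monotonicity of the integral combined with the fact that $\setPlans{\P}{\Q}$ does not depend on the cost for the cost claim. Nothing is missing; in particular, you correctly identified the one subtle point, namely that both infima in the second claim range over the same feasible set of couplings, so the termwise inequality survives passage to the infimum.
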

In words, an increase in the transportation cost shrinks the \gls{acr:ot} ambiguity set, whereas an increase of the radius enlarges it. These simple observations arm practitioners with actionable knobs to control the level of distributional uncertainty.


\subsection{Robustness in (space, likelihood)}
\label{subsec:properties:OT}

\begin{figure*}[t]
    \centering
    \includegraphics[width=0.99\textwidth]{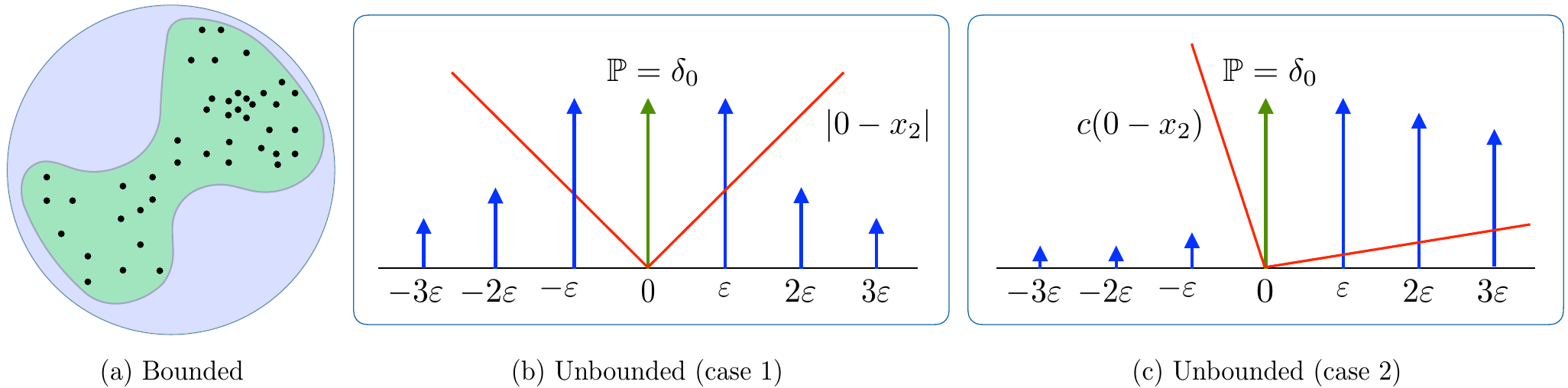}
    \caption{\gls{acr:ot} ambiguity sets allow to: (a) reduce conservativeness of robust uncertainty models, (b) capture arbitrarily large uncertainty values, together with the information about how rare those events are, and (c) include side information about the uncertainty in the transportation cost.}
    \label{fig:AmbSet}
\end{figure*}

\gls{acr:ot} ambiguity sets are attractive to model distributional uncertainty for various reasons, which we detail next. 
Differently from standard uncertainty models, namely robust (i.e., norm-bounded uncertainty) and stochastic (i.e., one fixed distribution), \gls{acr:ot} ambiguity sets capture uncertainty in both space (i.e., the support of the distribution) and likelihood (i.e., the probability of specific events). These are encoded in $\ball{\varepsilon}{c}{\P}$ through $c$ and $\varepsilon$: given a reference distribution $\P$ and a radius $\varepsilon$ (intended as transportation `budget'), the transportation cost $c$ controls the displacement of probability mass from $\P$, and consequently the \emph{shape} and \emph{size} of $\ball{\varepsilon}{c}{\P}$. Specifically, given $x_1$ in the support of $\P$, then higher the value of $c(x_1,x_2)$, less probability mass is transported from $x_1$ to $x_2$. This automatically bounds the likelihood of the value $x_2$ for any distribution in the \gls{acr:ot} ambiguity set $\ball{\varepsilon}{c}{\P}$. In particular, if $c(x_1,x_2)$ is large enough for all $x_1$ in the support of $\P$ and all $x_2 \in \mathcal A$, for some $\mathcal A \subseteq \mathcal X$, then no distribution inside $\ball{\varepsilon}{c}{\P}$ will be supported on $\mathcal A$. 

This feature of \gls{acr:ot} ambiguity sets allows us to capture a great variety of uncertainty phenomena, as detailed next. 

\smallskip

\emph{(i) $\mathcal X$ bounded.}
In this case, $\ball{\varepsilon}{c}{\P}$ naturally interpolates between the robust and the stochastic uncertainty models. Specifically, if $\varepsilon \to \text{diam}(\mathcal X) =\max_{x_1,x_2 \in \mathcal X} c(x_1,x_2)$, then $\ball{\varepsilon}{c}{\P}$ recovers the probabilistic representation of the robust model  (i.e., the set of all delta distributions $\delta_\xi$ for all $\xi \in \mathcal X$). If instead $\varepsilon \to 0$, then $\ball{\varepsilon}{c}{\P} = \P$ recovers the stochastic model. This interpolation between the two models allows to \emph{reduce the conservativeness} of the robust model, while \emph{robustifying} the stochastic model. Importantly, this property opens the door to studying decision-making formulations which optimally trade between safety and performance (see \cite{aolaritei2023wasserstein} for a control application). This is particularly important in \emph{data-driven} scenarios, where $\P$ is an empirical distribution supported on a finite number of samples from the uncertainty. This is illustrated in \cref{fig:AmbSet}(a), where the black dots are the samples, the green shape is the support of the true unknown uncertainty distribution, and the blue circle is the robust model. In this case, $\ball{\varepsilon}{c}{\P}$ can be properly designed to capture with high probability only the green area.

\smallskip

\emph{(ii) $\mathcal X$ unbounded}. In this case, \gls{acr:ot} ambiguity sets have the ability to capture arbitrarily large uncertainty values, together with the information about how rare those events are. We illustrate this reasoning in \cref{fig:AmbSet}(b), where for simplicity of exposition we consider $\mathcal X=\reals$, $c(x_1,x_2)=|x_1-x_2|$ (represented by the red lines), $\P = \delta_0$ (i.e., the reference distribution is empirical and supported on the point $0$). Then, a simple calculation shows that $\transportCost{c}{\diracDelta{0}}{\frac{t-1}{t}\diracDelta{0} + \frac{1}{t}\diracDelta{t \varepsilon}}=\varepsilon$ for all $t\in\mathbb Z$. In words, $\ball{\varepsilon}{c}{\P}\coloneqq\ball{\varepsilon}{|\cdot|}{\delta_0}$ captures the uncertainty value $t \varepsilon$ together with the information that the probability of this value is at most $\frac{1}{t}$. This reasoning extends to more general scenarios (e.g., arbitrary $\P$ and general $c$; see \cref{fig:AmbSet}(c)) and makes \gls{acr:ot} ambiguity sets particularly suitable to model and robustify against \emph{black swan events}, i.e., extreme outliers with catastrophic effects which are impossibly difficult to predict. This is experimentally validated in \cite{aolaritei2023hedging}, in the context of day-ahead energy trading in energy markets.


\section{Pushforward of \gls{acr:ot} Ambiguity Sets}
\label{sec:propagation}

In this section, we study how \gls{acr:ot} ambiguity sets propagate via linear and nonlinear transformations. We start by showing that naive approaches (in particular, propagation of the center only, or propagation based on Lipschitz bounds) fail to effectively capture the propagation of distributional uncertainty.


\subsection{Naive Approaches and their Shortcomings}

Given the \gls{acr:ot} ambiguity set $\ball{\varepsilon}{c}{\P}$, one might be tempted to approximate the result of the propagation $\pushforward{f}\ball{\varepsilon}{c}{\P}$ by $\ball{\varepsilon}{c}{\pushforward{f}\P}$. This approach suffers from fundamental limitations already in very simple settings, easily resulting in crude overestimation or catastrophic underestimation of the distributional uncertainty, as shown in the next example.

\begin{example}
\label{ex:propagation:centernotenough}
Let $c(x_1-x_2)=|x_1-x_2|^2$ on $\reals$.
\begin{enumerate}
    \item Let $f \equiv 0$. Then, $\pushforward{f}\ball{\varepsilon}{c}{\P}$ only contains $\diracDelta{0}$ (regardless of $\P$), whereas $\ball{\varepsilon}{c}{\pushforward{f}\P}=\ball{\varepsilon}{c}{\diracDelta{0}}$ contains all distributions whose second moment is at most $\varepsilon$. Therefore, $\ball{\varepsilon}{c}{\pushforward{f}\P}$ \emph{overestimates} the true distributional uncertainty.
    
    \item Let $f(x)=2x$, and $\P = \delta_0$. Then, $\ball{\varepsilon}{c}{\pushforward{f}\P}=\ball{\varepsilon}{c}{\diracDelta{0}}$. In~\cref{thm:nonlin:trans} we show that $\pushforward{f}\ball{\varepsilon}{c}{\P} = \ball{4\varepsilon}{c}{\diracDelta{0}}$. Thus, $\ball{\varepsilon}{c}{\pushforward{f}\P}$ \emph{underestimates} the true uncertainty.
\end{enumerate}
\end{example}

Moreover, one might be tempted to bound the propagated distributional uncertainty with the Lipschitz constant $L$ of $f$, i.e., to upper bound $\pushforward{f}\ball{\varepsilon}{c}{\P}$ with $\ball{L\varepsilon}{c}{f_\# \P}$.
However, this approach suffers from three major limitations.
First, many transformations are not Lipschitz continuous.
Second, the transportation cost $c(x_1,x_2)$ might not be $\norm{x_1-x_2}$, which makes the Lipschitz bound not directly applicable. Indeed, already in~\cref{ex:propagation:centernotenough}, an increase by a factor of 2 in the radius does not alleviate the underestimation of the true ambiguity set: one needs to use $L^2=2^2$ to account for the quadratic transportation cost $|x_1-x_2|^2$.
Finally, even if the transportation cost is $\norm{x_1-x_2}$, Lipschitz bounds might be overly conservative, as shown next. 
 
\begin{example}
Let $c(x_1,x_2)=\|x_1-x_2\|_2$ on $\reals^n$, $\P = \delta_0$, and $A$ be a diagonal matrix with diagonal entries $\{n,0,\ldots,0\}$ (thus $L =n$). Then, $\pushforward{A}\ball{\varepsilon}{c}{\delta_0}$ eliminates all the distributional uncertainty in the last $d-1$ dimensions. However, $\ball{n\varepsilon}{c}{\delta_0}$ contains, among others, all distributions of the form $\delta_0 \otimes \Q$ with $\delta_0 \in \mathcal P(\reals)$, and $\Q \in \mathcal P(\reals^{n-1})$ satisfying $\mathbb E_{\Q}\left[\|x\|_2  \right]\leq n\varepsilon$.
\end{example}

These shortcomings prompt us to study the exact propagation of \gls{acr:ot} ambiguity sets.


\subsection{Nonlinear Transformations}
\label{subsec:nonlin:trans}

We seek to characterize the ambiguity set $\pushforward{f}\ball{\varepsilon}{c}{\P}$ for a general Borel map $f:\mathcal{X}\to\mathcal{Y}$. We proceed in three steps. In~\cref{lemma:stability set plans}, we prove a result at the level of the \emph{set of couplings} between two probability distributions $\P$ and $\Q$. Armed with this result, in \cref{prop:Wc:pushforward} we study the pushforward at the level of the \emph{\gls{acr:ot} discrepancy}. Our analysis culminates in~\cref{thm:nonlin:trans}, where we show how general \gls{acr:ot} ambiguity sets propagate under pushforward via arbitrary (nonlinear) maps.

At the level of couplings, the propagation is well behaved: the set of couplings between $\pushforward{f}\P$ and $\pushforward{f}\Q$, i.e., $\setPlans{\pushforward{f}\P}{\pushforward{f}\Q}$, corresponds to the pushforward via $\tensorProd{f}{f}$ of $\setPlans{\P}{\Q}$.

\begin{lemma}
\label{lemma:stability set plans}
Let $\P, \Q\in\probSpace{\mathcal X}$, and consider an arbitrary transformation $f:\mathcal X \to \mathcal Y$. Then,
\begin{equation*}
    \pushforward{(\tensorProd{f}{f})}\setPlans{\P}{\Q}
    =\setPlans{\pushforward{f}\P}{\pushforward{f}\Q}.
\end{equation*}
\end{lemma}

We now study the \gls{acr:ot} discrepancy \eqref{eq:otcost} between the pushforward of two probability distributions. \cref{lemma:stability set plans} stipulates that, instead of optimizing over the set of all couplings $\setPlans{\pushforward{f}\P}{\pushforward{f}\Q}$, we can equivalently optimize over the pushforward of all couplings $\pushforward{(\tensorProd{f}{f})}\setPlans{\P}{\Q}$.
This helps us shed light on the relation between the \gls{acr:ot} discrepancy between $\pushforward{f}\P$ and $\pushforward{f}\Q$ and the \gls{acr:ot} discrepancy between $\P$ and $\Q$.

\begin{proposition}
\label{prop:Wc:pushforward}
Let $\P, \Q\in\probSpace{\mathcal X}$, consider a transformation $f:\mathcal X \to \mathcal Y$, and let $d:\mathcal Y \times \mathcal Y \to \nonnegativeReals$ be a transportation cost on $\mathcal Y$. Then,
    \begin{align}
    \label{eq:W_c:arbitrary:f}
        \transportCost{d\circ (\tensorProd{f}{f})}{\P}{\Q}
        =
        \transportCost{d}{\pushforward{f}\P}{\pushforward{f}\Q}.
    \end{align}
Moreover, let $c:\mathcal X \times \mathcal X \to \nonnegativeReals$ be a transportation cost on $\mathcal X$. Then, we have the following cases:

1) If $f$ is bijective, with inverse $\inv{f}$, then
    \begin{align}
    \label{eq:W_c:bijective:f}
        \transportCost{c}{\P}{\Q}
        =
        \transportCost{c\circ (\tensorProd{\inv{f}}{\inv{f}})}{\pushforward{f}\P}{\pushforward{f}\Q}.
    \end{align}
    
2) If $f$ is injective, with left inverse $\inv{f}$, then
    \begin{align}
    \label{eq:W_c:injective:f}
        \transportCost{c}{\P}{\Q}
        =
        \transportCost{c\circ (\tensorProd{\inv{f}}{\inv{f}})}{\pushforward{f}\P}{\pushforward{f}\Q}.
    \end{align}
    
3) If $f$ is surjective, with right inverse $\inv{f}$, then 
    \begin{align}
    \label{eq:W_c:surjective:f}
        \transportCost{c\circ (\tensorProd{\inv{f}}{\inv{f}}) \circ (\tensorProd{f}{f})}{\P}{\Q} = \transportCost{c\circ (\tensorProd{\inv{f}}{\inv{f}})}{\pushforward{f}\P}{\pushforward{f}\Q}.
    \end{align} 
\end{proposition}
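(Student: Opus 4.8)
The plan is to establish the master identity \eqref{eq:W_c:arbitrary:f} first, and then to derive the three special cases \eqref{eq:W_c:bijective:f}--\eqref{eq:W_c:surjective:f} as essentially immediate corollaries. For \eqref{eq:W_c:arbitrary:f}, the strategy is to rewrite the feasible set of the transport problem on $\mathcal Y$ via \cref{lemma:stability set plans} and then transfer the objective back to $\mathcal X$ through the change-of-variables formula for pushforwards. Concretely, I would start from the right-hand side $\transportCost{d}{\pushforward{f}\P}{\pushforward{f}\Q} = \inf_{\gamma' \in \setPlans{\pushforward{f}\P}{\pushforward{f}\Q}} \int_{\mathcal Y \times \mathcal Y} d \, \d\gamma'$. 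By \cref{lemma:stability set plans}, every feasible $\gamma'$ equals $\pushforward{(\tensorProd{f}{f})}\gamma$ for some $\gamma \in \setPlans{\P}{\Q}$, and conversely every such pushforward is feasible. Applying $\int g \, \d(\pushforward{h}\mu) = \int (g\circ h)\,\d\mu$ with $g = d$ and $h = \tensorProd{f}{f}$ turns $\int d \, \d\gamma'$ into $\int (d \circ (\tensorProd{f}{f}))\,\d\gamma$, so the two infima coincide and both equal $\transportCost{d \circ (\tensorProd{f}{f})}{\P}{\Q}$. Nonnegativity and lower semicontinuity of $d$ ensure all integrals are well-defined (possibly $+\infty$).

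The bijective case \eqref{eq:W_c:bijective:f} and the injective case \eqref{eq:W_c:injective:f} then follow by one and the same substitution: applying \eqref{eq:W_c:arbitrary:f} with $\inv{f}$ in place of $f$, with $\pushforward{f}\P, \pushforward{f}\Q$ in place of $\P, \Q$, and with cost $c$ in place of $d$, yields $\transportCost{c \circ (\tensorProd{\inv{f}}{\inv{f}})}{\pushforward{f}\P}{\pushforward{f}\Q} = \transportCost{c}{\pushforward{\inv{f}}\pushforward{f}\P}{\pushforward{\inv{f}}\pushforward{f}\Q}$. Since $\pushforward{\inv{f}}\pushforward{f} = \pushforward{(\inv{f}\circ f)} = \pushforward{\Id}$ whenever $\inv{f}$ is a left inverse of $f$ — which holds both in the bijective and in the injective case — the right-hand side collapses to $\transportCost{c}{\P}{\Q}$. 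This explains why the two cases share a single proof: only the left-inverse relation $\inv{f}\circ f = \Id$ is used, and bijectivity is not actually needed.

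The surjective case \eqref{eq:W_c:surjective:f} is the most direct. Here $\inv{f}$ is only a right inverse, so $\inv{f}\circ f$ need not be the identity and the collapse above is unavailable. Instead, I would simply specialize the master identity \eqref{eq:W_c:arbitrary:f} to the cost $d \coloneqq c \circ (\tensorProd{\inv{f}}{\inv{f}})$ on $\mathcal Y$, which is a legitimate transportation cost since $\inv{f}:\mathcal Y \to \mathcal X$ and $c$ is defined on $\mathcal X \times \mathcal X$. Substituting this $d$ into \eqref{eq:W_c:arbitrary:f} reproduces exactly the claim, with $d \circ (\tensorProd{f}{f}) = c \circ (\tensorProd{\inv{f}}{\inv{f}}) \circ (\tensorProd{f}{f})$ appearing on the left-hand side.

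The only genuinely delicate point is the first paragraph: one must check that replacing the infimum over $\gamma'$ by an infimum over $\gamma$ is lossless. This is precisely where \cref{lemma:stability set plans} is indispensable, since it certifies that the image of $\setPlans{\P}{\Q}$ under $\pushforward{(\tensorProd{f}{f})}$ is \emph{exactly} $\setPlans{\pushforward{f}\P}{\pushforward{f}\Q}$ — no feasible coupling downstairs is missed and none created spuriously — and that the transferred objective is constant along the fibers of this surjection. Everything else reduces to the change-of-variables formula, which is routine.
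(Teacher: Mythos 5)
Your proposal is correct and matches the paper's proof in all essentials: both establish the master identity \eqref{eq:W_c:arbitrary:f} by combining \cref{lemma:stability set plans} with the change-of-variables formula for pushforwards, and both obtain the three cases by specializing it (the paper merges the bijective and injective cases exactly as you do, observing that only the left-inverse identity is needed, and its surjective-case computation is precisely \eqref{eq:W_c:arbitrary:f} with $d = c\circ(\tensorProd{\inv{f}}{\inv{f}})$, which you invoke directly). The only cosmetic difference is in cases 1)--2): the paper substitutes $d = c\circ(\tensorProd{\inv{f}}{\inv{f}})$ into \eqref{eq:W_c:arbitrary:f} and cancels $(\tensorProd{\inv{f}}{\inv{f}})\circ(\tensorProd{f}{f})=\Id$ on the cost side, whereas you apply \eqref{eq:W_c:arbitrary:f} to the map $\inv{f}$ and cancel $\pushforward{(\inv{f}\circ f)}=\pushforward{\Id}$ on the measure side---the same one-line argument read in mirror image.
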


\cref{prop:Wc:pushforward} states that if the transportation cost $c$ on $\mathcal{X}$ is of the form $d\circ(\tensorProd{f}{f})$, for some transportation cost $d$ on $\mathcal{Y}$, then the \gls{acr:ot} discrepancies $\transportCost{c}{\P}{\Q}$ and $\transportCost{d}{\pushforward{f}\P}{\pushforward{f}\Q}$ coincide. 
This a priori assumption of the transportation cost can be dropped if the function $f$ satisfies additional assumptions.  
For instance, if $f$ is bijective or injective (and thus, a left inverse $\inv{f}$ exists), then for \emph{any} transportation cost $c$ on $\mathcal{X}$, the \gls{acr:ot} discrepancies $\transportCost{c}{\P}{\Q}$ and  $\transportCost{c\circ(\tensorProd{\inv{f}}{\inv{f}})}{\pushforward{f}\P}{\pushforward{f}\Q}$ coincide. 
We can now state the main result of this subsection.

\begin{figure}[t]
    \centering
    \includegraphics[width=0.75\linewidth]{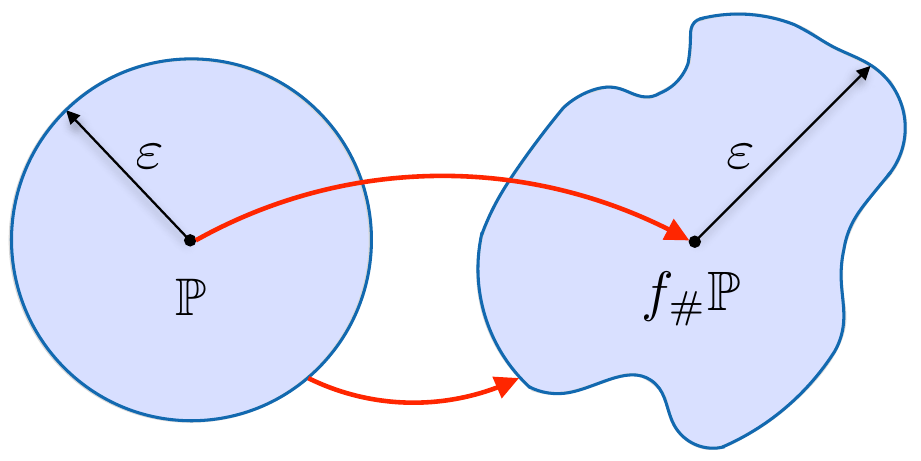}
    \caption{Pushforward of \gls{acr:ot} ambiguity sets.}
    \label{fig:prop}
\end{figure}

\begin{theorem}[Nonlinear transformations]
\label{thm:nonlin:trans}
Let $\P\in\probSpace{\mathcal X}$, consider an arbitrary transformation $f:\mathcal X \to \mathcal Y$, and let $d:\mathcal Y \times \mathcal Y \to \nonnegativeReals$ be a transportation cost on $\mathcal Y$. Then,
    \begin{align}
    \label{eq:B_epsilon:arbitrary:f}
        \pushforward{f}{\mathbb B_\varepsilon^{d\circ (\tensorProd{f}{f})}(\P)} 
        \subset 
        \mathbb B_\varepsilon^{d}(\pushforward{f}{\P}).
    \end{align}
Moreover, let $c:\mathcal X \times \mathcal X \to \nonnegativeReals$ be a transportation cost on $\mathcal X$. Then, we have the following cases:
\begin{enumerate}
\item If $f$ is bijective, with inverse $\inv{f}$, then
    \begin{align}
    \label{eq:B_epsilon:bijective:f}
        \pushforward{f}{\mathbb B_\varepsilon^{c}(\P)} 
        = 
        \mathbb B_\varepsilon^{c\circ (\tensorProd{\inv{f}}{\inv{f}})}(\pushforward{f}{\P}).
    \end{align}
    
\item If $f$ is injective, with left inverse $\inv{f}$, then
    \begin{equation}
    \begin{aligned}
    \label{eq:B_epsilon:injective:f}
        \pushforward{f}{\mathbb B_\varepsilon^{c}(\P)} 
        &= 
        \pushforward{(f \circ \inv{f})}\mathbb B_\varepsilon^{c\circ (\tensorProd{\inv{f}}{\inv{f}})}(\pushforward{f}{\P}) 
        \\
        &\subset 
        \mathbb B_\varepsilon^{c\circ (\tensorProd{\inv{f}}{\inv{f}})}(\pushforward{f}{\P}).
    \end{aligned}
    \end{equation}
    
\item If $f$ is surjective, with right inverse $\inv{f}$, then 
    \begin{equation}
    \begin{aligned}
    \label{eq:B_epsilon:surjective:f}
        \pushforward{f}{\ball{\varepsilon}{c\circ (\tensorProd{\inv{f}}{\inv{f}}) \circ (\tensorProd{f}{f})}{\P}} 
        &= 
        \mathbb B_\varepsilon^{c\circ (\tensorProd{\inv{f}}{\inv{f}})}(\pushforward{f}{\P}) 
        \\
        &\subset 
        \pushforward{f}{\mathbb B_\varepsilon^{c}\left(\pushforward{(\inv{f}\circ f)}\P\right)}.
    \end{aligned}
    \end{equation} 
\end{enumerate}
\end{theorem}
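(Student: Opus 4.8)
The plan is to lift each of the four discrepancy identities in \cref{prop:Wc:pushforward} to the level of ambiguity sets, exploiting the elementary description $\pushforward{f}\ball{\varepsilon}{c}{\P}=\{\pushforward{f}\Q:\transportCost{c}{\P}{\Q}\le\varepsilon\}$. For the arbitrary case \eqref{eq:B_epsilon:arbitrary:f}, I would take $\Q'=\pushforward{f}\Q$ with $\Q\in\ball{\varepsilon}{d\circ(\tensorProd{f}{f})}{\P}$ and apply \eqref{eq:W_c:arbitrary:f} to obtain $\transportCost{d}{\pushforward{f}\P}{\Q'}=\transportCost{d\circ(\tensorProd{f}{f})}{\P}{\Q}\le\varepsilon$; only an inclusion holds because $\Q\mapsto\pushforward{f}\Q$ need not be onto $\probSpace{\mathcal Y}$. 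The bijective case \eqref{eq:B_epsilon:bijective:f} uses the same computation via \eqref{eq:W_c:bijective:f} for the ``$\subseteq$'' direction; for ``$\supseteq$'', bijectivity makes $\pushforward{\inv{f}}$ a two-sided inverse of $\pushforward{f}$ on $\probSpace{\mathcal Y}$, so every $\Q'$ in the target ball equals $\pushforward{f}(\pushforward{\inv{f}}\Q')$ and \eqref{eq:W_c:bijective:f} certifies $\pushforward{\inv{f}}\Q'\in\ball{\varepsilon}{c}{\P}$.

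For the injective case \eqref{eq:B_epsilon:injective:f}, write $d\coloneqq c\circ(\tensorProd{\inv{f}}{\inv{f}})$. The final inclusion is immediate from \eqref{eq:B_epsilon:arbitrary:f} since $\inv{f}\circ f=\Id$ forces $d\circ(\tensorProd{f}{f})=c$. For the equality with $\pushforward{(f\circ\inv{f})}\ball{\varepsilon}{d}{\pushforward{f}\P}$ I would set $g\coloneqq f\circ\inv{f}$ and record that $g$ is idempotent, that it fixes $\pushforward{f}\P$ (because $g\circ f=f$), and that $d$ is invariant under $\tensorProd{g}{g}$ (because $\inv{f}\circ g=\inv{f}$). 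The inclusion ``$\subseteq$'' then holds by using $\Q'=\pushforward{f}\Q$ as its own representative, noting $\pushforward{g}\Q'=\Q'$; for ``$\supseteq$'' I would map $\Q''\in\ball{\varepsilon}{d}{\pushforward{f}\P}$ to $\Q\coloneqq\pushforward{\inv{f}}\Q''$, so that $\pushforward{f}\Q=\pushforward{g}\Q''$, and combine the $\tensorProd{g}{g}$-invariance of $d$ with \eqref{eq:W_c:arbitrary:f} applied to $g$ to get $\transportCost{d}{\pushforward{f}\P}{\pushforward{g}\Q''}=\transportCost{d}{\pushforward{f}\P}{\Q''}\le\varepsilon$, which by \eqref{eq:W_c:injective:f} equals $\transportCost{c}{\P}{\Q}$.

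For the surjective case \eqref{eq:B_epsilon:surjective:f}, again set $d\coloneqq c\circ(\tensorProd{\inv{f}}{\inv{f}})$. The first equality upgrades \eqref{eq:B_epsilon:arbitrary:f}: a right inverse with $f\circ\inv{f}=\Id$ makes $\pushforward{f}$ onto $\probSpace{\mathcal Y}$ through $\Q=\pushforward{\inv{f}}\Q'$, and \eqref{eq:W_c:surjective:f} transfers membership in both directions. The second inclusion is the crux, since the center of the target ball is $\pushforward{(\inv{f}\circ f)}\P$, not $\pushforward{f}\P$. Given $\Q'$ with $\transportCost{d}{\pushforward{f}\P}{\Q'}\le\varepsilon$, I would set $\R\coloneqq\pushforward{\inv{f}}\Q'$, so that $\pushforward{f}\R=\Q'$, and bound $\transportCost{c}{\pushforward{(\inv{f}\circ f)}\P}{\R}$. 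The key step is to apply the injective identity \eqref{eq:W_c:injective:f} to $\inv{f}$ itself, which is injective with left inverse $f$: writing $h\coloneqq\inv{f}\circ f$ this gives $\transportCost{d}{\pushforward{f}\P}{\Q'}=\transportCost{c\circ(\tensorProd{h}{h})}{\pushforward{h}\P}{\R}$, using $d\circ(\tensorProd{f}{f})=c\circ(\tensorProd{h}{h})$. Finally, since $h$ is idempotent and restricts to the identity on $\range(\inv{f})$ — the common support of $\pushforward{h}\P$ and $\R$ — the cost $c\circ(\tensorProd{h}{h})$ agrees with $c$ on $\range(\inv{f})\times\range(\inv{f})$, so the two discrepancies coincide and the bound $\transportCost{c}{\pushforward{h}\P}{\R}\le\varepsilon$ follows.

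The hard part will be this second inclusion of the surjective case: unlike the others, it is not a direct transcription of a discrepancy identity, precisely because the reference measure shifts from $\pushforward{f}\P$ to $\pushforward{(\inv{f}\circ f)}\P$. Its resolution rests on two geometric facts about the idempotent map $h=\inv{f}\circ f$ — that it fixes $\range(\inv{f})$ pointwise and that both $\pushforward{h}\P$ and $\R$ are supported there — which collapse $c\circ(\tensorProd{h}{h})$ back to $c$ on the relevant support. Verifying these support statements (and, analogously, the $\tensorProd{g}{g}$-invariance of $d$ in the injective case) is where the real care is needed, while everything else is bookkeeping with $\inv{f}\circ f=\Id$ or $f\circ\inv{f}=\Id$.
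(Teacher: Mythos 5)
Your proposal is correct, and for \eqref{eq:B_epsilon:arbitrary:f}, the bijective case, the injective case, and the first equality in \eqref{eq:B_epsilon:surjective:f}, it is essentially the paper's own proof: lift the identities of \cref{prop:Wc:pushforward} to the level of ambiguity sets, using $\pushforward{\inv{f}}\Q'$ as the witness whenever a preimage of a measure in the target ball is needed.

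The genuine divergence is the second inclusion of \eqref{eq:B_epsilon:surjective:f}, which you rightly call the crux. Your route applies the injective identity \eqref{eq:W_c:injective:f} to $\inv{f}$ (injective, with left inverse $f$), which produces the discrepancy $\transportCost{c\circ(\tensorProd{h}{h})}{\pushforward{h}\P}{\pushforward{\inv{f}}\Q'}$ with $h=\inv{f}\circ f$, and then trades the cost $c\circ(\tensorProd{h}{h})$ for $c$ by a support argument: $h$ is idempotent, fixes $\range(\inv{f})$ pointwise, and both measures are concentrated there. This is valid, but it silently requires $\range(\inv{f})$ to be a Borel set so that ``concentrated on $\range(\inv{f})$'' and the restriction of couplings to $\range(\inv{f})\times\range(\inv{f})$ make sense; this is true by the Lusin--Suslin theorem, since $\inv{f}$ is an injective Borel map between Polish spaces, but it is a point you would need to spell out. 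The paper avoids all of this: it applies the \emph{arbitrary-case} statement (\eqref{eq:W_c:arbitrary:f}, or its ball form \eqref{eq:B_epsilon:arbitrary:f}) to the map $\inv{f}:\mathcal Y\to\mathcal X$ with target cost $c$ on $\mathcal X$. Since the ball's cost $c\circ(\tensorProd{\inv{f}}{\inv{f}})$ is literally ``target cost composed with the map,'' this yields in one line
\begin{equation*}
    \transportCost{c\circ(\tensorProd{\inv{f}}{\inv{f}})}{\pushforward{f}\P}{\Q'}
    =
    \transportCost{c}{\pushforward{(\inv{f}\circ f)}\P}{\pushforward{\inv{f}}\Q'},
\end{equation*}
which is exactly the bound you are after, with no support analysis; in set form,
$\ball{\varepsilon}{c\circ(\tensorProd{\inv{f}}{\inv{f}})}{\pushforward{f}\P}
=\pushforward{f}\pushforward{\inv{f}}\ball{\varepsilon}{c\circ(\tensorProd{\inv{f}}{\inv{f}})}{\pushforward{f}\P}
\subset\pushforward{f}\ball{\varepsilon}{c}{\pushforward{(\inv{f}\circ f)}\P}$.
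So the two approaches prove the same statement, but the paper's makes your ``hard part'' just one more instance of the same one-line mechanism used everywhere else, whereas your detour costs an extra measurability/support argument and buys nothing additional.
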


\begin{remark}
We highlight the striking similarity between the propagation of OT ambiguity sets in Theorem~\ref{thm:nonlin:trans} and the propagation of deterministic bounded sets defined as $\mathcal{B}_\varepsilon^c(x_0)\coloneqq\{x\in\mathcal X: c(x_0,x)\leq\varepsilon\}$. Simple calculations show that for a bijective transformations $f:\mathcal X \to \mathcal Y$ we have $f(\mathcal{B}_\varepsilon^c(x_0))
=\{y\in \mathcal Y:c(f^{-1}(f(x_0)),f^{-1}(y))\leq \varepsilon\} =\mathcal{B}_\varepsilon^{c\circ (f^{-1}\times f^{-1}})(f(x_0))$. Surprisingly, an analogous result, namely~\eqref{eq:B_epsilon:bijective:f}, holds in probability spaces for~\gls{acr:ot} ambiguity sets. This shows that propagating OT ambiguity sets is \emph{as easy as} propagating standard deterministic sets.
\end{remark}

\begin{remark}
In general, inclusion~\eqref{eq:B_epsilon:injective:f} cannot be improved to equality. For instance, consider the map $f:\reals\to\reals^2$ defined by $f(x_1)=(x_1,0)$, with left inverse $\inv{f}(x_1,x_2)=x_1$. Clearly, $f$ is injective. Moreover, consider the transportation cost $c(x_1,x_2)=|x_1-x_2|$, $\P=\delta_{0}$. Let $\Q=\delta_{(0,1)}$. By construction, $\Q$ cannot be obtained from the pushforward via $f$ of a probability distribution over $\reals$, and so $\Q\notin\pushforward{f}\ball{\varepsilon}{c}{\P}$. However, $\transportCost{c\circ(\tensorProd{\inv{f}}{\inv{f}})}{\Q}{\pushforward{f}\P}
    =
    |\inv{f}(0,1)-\inv{f}(0,0)|=0$,
and so $\Q\in\ball{\varepsilon}{c\circ(\tensorProd{\inv{f}}{\inv{f}})}{\pushforward{f}\P}$.
\end{remark}

In a nutshell, \cref{thm:nonlin:trans} asserts that the pushforward of an \gls{acr:ot} ambiguity set of radius $\varepsilon$ centered at $\P$ via an invertible map $f$ coincides with an \gls{acr:ot} ambiguity set with the same radius $\varepsilon$ centered at $\pushforward{f}\P$, constructed with the transportation cost $c\circ(\tensorProd{\inv{f}}{\inv{f}})$; see \cref{fig:prop}. When $f$ is only injective, the equality reduces to inclusion. In the surjective case, an upper bound for $\pushforward{f}\ball{\varepsilon}{c}{\P}$ can be easily recovered from \cref{lemma:ambiguitysetmonotone}:

\begin{corollary}
\label{cor:nonlin:trans}
Let $\P\in\probSpace{\mathcal X}$, consider an arbitrary surjective transformation $f:\mathcal X\to \mathcal Y$, with right inverse $\inv{f}$, and let $c:\mathcal X\times \mathcal X\to\nonnegativeReals$ be a transportation cost on $\mathcal X$ satisfying
\begin{equation}
    \label{eq:B_epsilon:surjective:f:rivisited:assumption}
    (c\circ(\tensorProd{\inv{f}}{\inv{f}})\circ(\tensorProd{f}{f}))(x_1,x_2)
    \leq
    c(x_1,x_2)
\end{equation}
for all $x_1,x_2\in \mathcal X$.
Then, 
\begin{equation}
    \label{eq:B_epsilon:surjective:f:rivisited}
    \pushforward{f}\ball{\varepsilon}{c}{\P}
    \subset
    \ball{\varepsilon}{c\circ(\tensorProd{\inv{f}}{\inv{f}})}{\pushforward{f}\P}.
\end{equation}
If equality holds in~\eqref{eq:B_epsilon:surjective:f:rivisited:assumption}, then equality holds in~\eqref{eq:B_epsilon:surjective:f:rivisited}.
\end{corollary}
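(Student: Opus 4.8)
The plan is to derive the corollary directly from the surjective case of \cref{thm:nonlin:trans} combined with the monotonicity of OT ambiguity sets in the transportation cost (\cref{lemma:ambiguitysetmonotone}). The key observation is that the modified cost appearing on the left-hand side of \eqref{eq:B_epsilon:surjective:f}, namely $\tilde c \coloneqq c\circ(\tensorProd{\inv{f}}{\inv{f}})\circ(\tensorProd{f}{f})$, is by hypothesis \eqref{eq:B_epsilon:surjective:f:rivisited:assumption} pointwise dominated by the original cost $c$, i.e., $\tilde c \leq c$. This is precisely the regime in which \cref{lemma:ambiguitysetmonotone} applies.

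First I would invoke \cref{lemma:ambiguitysetmonotone}(2): since $\tilde c \leq c$, an increase in the transportation cost shrinks the ambiguity set, so $\ball{\varepsilon}{c}{\P}\subseteq\ball{\varepsilon}{\tilde c}{\P}$. Next, since the pushforward is monotone with respect to set inclusion (if $\mathcal S_1 \subseteq \mathcal S_2$, then $\pushforward{f}\mathcal S_1 \subseteq \pushforward{f}\mathcal S_2$), applying $\pushforward{f}$ to both sides yields $\pushforward{f}\ball{\varepsilon}{c}{\P}\subseteq\pushforward{f}\ball{\varepsilon}{\tilde c}{\P}$. Finally, the surjective identity \eqref{eq:B_epsilon:surjective:f} identifies the right-hand side exactly, namely $\pushforward{f}\ball{\varepsilon}{\tilde c}{\P}=\ball{\varepsilon}{c\circ(\tensorProd{\inv{f}}{\inv{f}})}{\pushforward{f}\P}$. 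Chaining these three steps gives the desired inclusion \eqref{eq:B_epsilon:surjective:f:rivisited}.

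For the equality claim, I would note that if equality holds in \eqref{eq:B_epsilon:surjective:f:rivisited:assumption}, i.e., $\tilde c = c$, then the inclusion in the first step becomes an equality of sets $\ball{\varepsilon}{c}{\P}=\ball{\varepsilon}{\tilde c}{\P}$; hence their pushforwards coincide and \eqref{eq:B_epsilon:surjective:f} immediately upgrades \eqref{eq:B_epsilon:surjective:f:rivisited} to an equality. There is no serious obstacle here: the argument is essentially a two-line assembly of the monotonicity lemma and the already-established surjective case of the theorem. The only points requiring (minor) care are the \emph{direction} of the monotonicity—a larger transportation cost yields a \emph{smaller} ball—and the elementary fact that pushforward preserves inclusions, which I would state explicitly to keep the argument self-contained.
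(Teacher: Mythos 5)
Your proof is correct and follows exactly the route the paper intends (the text preceding the corollary states that the bound "can be easily recovered from \cref{lemma:ambiguitysetmonotone}"): you combine the cost-monotonicity of ambiguity sets from \cref{lemma:ambiguitysetmonotone}(2) with the surjective equality \eqref{eq:B_epsilon:surjective:f} of \cref{thm:nonlin:trans}, using that pushforward preserves set inclusion. The equality case is likewise handled as the paper intends, since \eqref{eq:B_epsilon:surjective:f:rivisited:assumption} holding with equality makes the two balls identical before pushing forward.
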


We exemplify~\cref{cor:nonlin:trans} in the case of norms. 

\begin{example}
\label{example:surjective:norm:1}
Let $f:\reals^n\to\nonnegativeReals$ be defined by $f(x)=\norm{x}$, where $\norm{\cdot}$ is a norm on $\reals^n$. Clearly, $f$ is surjective, and a right inverse is given by $\inv{f}(t)=tx_0$ for some $x_0\in\reals^n$ with $\norm{x_0}=1$. Then,  $(\inv{f}\circ f)(x)=\inv{f}(\norm{x})=\norm{x}x_0$.
Consider a transportation cost $c(x_1,x_2)=\psi(\norm{x_1-x_2})$, for monotone $\psi:\nonnegativeReals\to\nonnegativeReals$. Elementary calculations show that $(c\circ(\tensorProd{\inv{f}}{\inv{f}})\circ(\tensorProd{f}{f}))(x_1,x_2)\leq c(x_1,x_2)$.
Therefore, \cref{cor:nonlin:trans} yields the inclusion $\pushforward{f}\ball{\varepsilon}{c}{\P}\subset\ball{\varepsilon}{c\circ(\tensorProd{\inv{f}}{\inv{f}})}{\pushforward{f}\P}$.
\end{example}


\subsection{Linear Transformations}
\label{subsec:lin:trans}

We now investigate \emph{linear} transformations between finite-dimensional Euclidean spaces, defined by a matrix $A\in\reals^{m\times n}$. The linear structure, together with mild assumptions on the transportation cost, allow us to strengthen the results of~\cref{subsec:nonlin:trans} in three ways.
First, we will derive an upper bound on $\pushforward{A}\ball{\varepsilon}{c}{\P}$ even if the linear map defined by $A$ is not injective or surjective. In such case, right and left inverses do not exist, and \cref{thm:nonlin:trans} is not applicable.
Second, we will prove a chain of inclusions that strenghtens the statements of~\cref{thm:nonlin:trans} (when restricted to linear transformations). Finally, we will show that the equality \eqref{eq:B_epsilon:bijective:f} can be extended to arbitrary surjective linear maps.

To accomplish this, we restrict the analysis to finite-dimensional Hilbert spaces. Consider $\mathcal X\coloneqq (\reals^n, \innerProduct{\cdot}{\cdot}_X)$, with $\innerProduct{x_1}{x_2}_X = x_1^\top X x_2$, for $X \in \reals^{n \times n}$ symmetric and positive definite, and $\mathcal Y\coloneqq  (\reals^m, \innerProduct{\cdot}{\cdot}_Y)$, with $Y \in \reals^{m \times m}$ symmetric and positive definite. Given $A\in\reals^{m\times n}$, let $\adjoint{A}$ denote its adjoint and $\pinv{A}$ denote its Moore-Penrose inverse.

\begin{remark}
Following the definition of $\mathcal X$ and $\mathcal Y$, it can be easily shown that $\adjoint{A}=X^{-1} \T{A} Y$. In particular, if $X=\eye_{n \times n}$ and $Y=\eye_{m \times m}$, we recover the standard inner product and $\adjoint{A}=\T{A}$ (i.e, the adjoint is the transpose). Moreover, if $A$ is full rank, then $\pinv{A}$ has a closed-form expression. Specifically, if $A$ is full row-rank then $\pinv{A}=\inv{X}\T{A}\inv{(A\inv{X}\T{A})}$, and if $A$ is full column-rank then $\pinv{A}=\inv{(\T{A}Y A)}\T{A}Y$.
\end{remark}

Throughout this subsection, we require the following assumption on the transportation cost.

\begin{assumption}
\label{assump:transportation:cost}
The transportation cost $c$ is
\begin{itemize}
    \item[(i)] translation-invariant: $c(x_1,x_2) := c(x_1 - x_2)$;

    \item[(ii)] orthomonotone: $c(x_1+x_2) \geq c(x_1)$ for all $x_1,x_2 \in \reals^n$ satisfying $\innerProduct{x_1}{x_2}_X = 0$.
\end{itemize}
\end{assumption}

Following Assumption~\ref{assump:transportation:cost}, the transportation cost $c$ becomes a function $\reals^n \to \nonnegativeReals$. Accordingly, we use the simplified notation $c \circ A^{-1}$ instead of $c \circ (A^{-1} \times A^{-1})$; i.e., $(c \circ A^{-1})(x_1-x_2) = c (A^{-1}x_1 - A^{-1}x_2)$.

\begin{remark}
All transportation costs of the form $c(x_1-x_2)=\psi(\norm{x_1-x_2}_X)$, where $\norm{\cdot}_X$ is the norm induced by the inner product $ \innerProduct{\cdot}{\cdot}_X$, and $\psi:\nonnegativeReals\to\nonnegativeReals$ is monotone and lower semi-continuous, are translation-invariant and orthomonotone. Indeed, for all $x_1,x_2\in\reals^n$ such that $\innerProduct{x_1}{x_2}_X=0$,
\begin{equation*}
    \psi(\norm{x_1+x_2}_X)
    =\psi\left(\sqrt{\norm{x_1}_X^2+\norm{x_2}_X^2}\right)
    \geq \psi(\norm{x_1}_X).
\end{equation*}
\end{remark}

We are now ready to state the main result of this subsection.

\begin{theorem}[Linear transformations]
\label{thm:lin:trans}
Let $\P\in\probSpace{\reals^n}$ and $A \in \reals^{m \times n}$. Moreover, let $c:\reals^n \to \nonnegativeReals$ satisfy \cref{assump:transportation:cost}. Then, the following chain of relationships holds:
\begin{equation}
\begin{aligned}
\label{eq:B_epsilon:arbitrary:A}
    \pushforward{A}{\ball{\varepsilon}{c}{\P}} 
    &\subset
    \pushforward{(A \pinv{A})} \ball{\varepsilon}{c \circ \pinv{A}}{\pushforward{A}\P}
    \\
    &=
    \pushforward{A} \ball{\varepsilon}{c}{\pushforward{(\pinv{A}A)} \P}
    \subset 
    \ball{\varepsilon}{c \circ \pinv{A}}{\pushforward{A}\P},
\end{aligned}
\end{equation}
Moreover, if the matrix $A$ is full row-rank (i.e., surjective), then all the inclusions in \eqref{eq:B_epsilon:arbitrary:A} collapse to equality, and we have
\begin{align}
\label{eq:B_epsilon:surjective:A:1}
    \pushforward{A}\ball{\varepsilon}{c}{\P} 
    = 
    \ball{\varepsilon}{c \circ \pinv{A}}{\pushforward{A}\P}.
\end{align}
\end{theorem}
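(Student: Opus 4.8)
The plan is to reduce the whole statement to the pushforward identity for the \gls{acr:ot} discrepancy in \cref{prop:Wc:pushforward} combined with the elementary algebra of the Moore--Penrose inverse. Write $R \coloneqq A\pinv{A}$ and $\Pi \coloneqq \pinv{A}A$. By the defining properties of $\pinv{A}$ (relative to $\innerProduct{\cdot}{\cdot}_X$ and $\innerProduct{\cdot}{\cdot}_Y$), $R$ is the $Y$-orthogonal projection onto $\range A$ and $\Pi$ is the $X$-orthogonal projection onto $\kernel(A)^\perp$, and they satisfy $A\Pi=A$, $RA=A$, and $A(\Id-\Pi)=0$. Since $\Pi$ is an $X$-orthogonal projection, \cref{assump:transportation:cost}(ii) gives $c(\Pi x)\le c(x)$, i.e. $c\circ\Pi\le c$. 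The workhorse, obtained from \eqref{eq:W_c:arbitrary:f} with $f=A$ and $d=c\circ\pinv{A}$ (so that $(c\circ\pinv{A})\circ(\tensorProd{A}{A})=c\circ\Pi$) together with monotonicity of the discrepancy in the cost, is
\[
\transportCost{c\circ\pinv{A}}{\pushforward{A}\mu}{\pushforward{A}\nu}
= \transportCost{c\circ\Pi}{\mu}{\nu}
\le \transportCost{c}{\mu}{\nu}, \qquad \mu,\nu\in\probSpace{\reals^n},
\]
and, dually, \eqref{eq:W_c:arbitrary:f} with $f=\pinv{A}$ gives $\transportCost{c}{\pushforward{(\pinv{A})}\mu'}{\pushforward{(\pinv{A})}\nu'}=\transportCost{c\circ\pinv{A}}{\mu'}{\nu'}$ for $\mu',\nu'\in\probSpace{\reals^m}$.

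For the chain \eqref{eq:B_epsilon:arbitrary:A} I argue element-wise. First inclusion: for $\Q\in\ball{\varepsilon}{c}{\P}$ the displayed identity yields $\transportCost{c\circ\pinv{A}}{\pushforward{A}\P}{\pushforward{A}\Q}\le\varepsilon$, so $\pushforward{A}\Q\in\ball{\varepsilon}{c\circ\pinv{A}}{\pushforward{A}\P}$; since $RA=A$ we have $\pushforward{A}\Q=\pushforward{R}\pushforward{A}\Q$, placing $\pushforward{A}\Q$ in $\pushforward{R}\ball{\varepsilon}{c\circ\pinv{A}}{\pushforward{A}\P}$. For the middle equality I prove both inclusions. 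Using $A\Pi=A$, the same computation with $\mu=\pushforward{\Pi}\P$ shows that every $\pushforward{A}\Q$ with $\Q\in\ball{\varepsilon}{c}{\pushforward{\Pi}\P}$ already lies in $\ball{\varepsilon}{c\circ\pinv{A}}{\pushforward{A}\P}$; this simultaneously gives the final inclusion of \eqref{eq:B_epsilon:arbitrary:A} and, after applying $\pushforward{R}$, the ``$\supseteq$'' half of the middle equality. For ``$\subseteq$'', given $\pushforward{R}\Q'$ with $\transportCost{c\circ\pinv{A}}{\pushforward{A}\P}{\Q'}\le\varepsilon$, I set $\Q\coloneqq\pushforward{(\pinv{A})}\Q'$; the dual identity gives $\transportCost{c}{\pushforward{\Pi}\P}{\Q}\le\varepsilon$ (since $\pushforward{(\pinv{A})}\pushforward{A}\P=\pushforward{\Pi}\P$), while $A\pinv{A}=R$ yields $\pushforward{A}\Q=\pushforward{R}\Q'$, so $\pushforward{R}\Q'\in\pushforward{A}\ball{\varepsilon}{c}{\pushforward{\Pi}\P}$.

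It remains to establish the collapse to equality when $A$ is full row-rank, where $R=\Id_m$. The only nontrivial point is the reverse inclusion $\ball{\varepsilon}{c\circ\pinv{A}}{\pushforward{A}\P}\subset\pushforward{A}\ball{\varepsilon}{c}{\P}$, which I would obtain from two facts. Fact (a): the surjective case of \cref{thm:nonlin:trans}, eq.~\eqref{eq:B_epsilon:surjective:f}, applies since $\pinv{A}$ is now a genuine right inverse, giving $\pushforward{A}\ball{\varepsilon}{c\circ\Pi}{\P}=\ball{\varepsilon}{c\circ\pinv{A}}{\pushforward{A}\P}$. Fact (b): the cost-relaxation identity $\pushforward{A}\ball{\varepsilon}{c\circ\Pi}{\P}=\pushforward{A}\ball{\varepsilon}{c}{\P}$, valid for arbitrary $A$. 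Chaining (b) then (a) gives $\pushforward{A}\ball{\varepsilon}{c}{\P}=\ball{\varepsilon}{c\circ\pinv{A}}{\pushforward{A}\P}$, and substituting this back into the chain forces every inclusion there to be an equality.

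Fact (b), which I expect to be the main obstacle, requires checking that relaxing the cost from $c$ to $c\circ\Pi$ does not enlarge the image under $\pushforward{A}$. One inclusion is immediate from $c\circ\Pi\le c$. For the reverse, take $\Q\in\ball{\varepsilon}{c\circ\Pi}{\P}$ and an optimal coupling $\gamma$ for $\transportCost{c\circ\Pi}{\P}{\Q}$ (which exists because $c\circ\Pi$ is nonnegative and lower semicontinuous on a Polish space), so $\int c(\Pi(x_1-x_2))\,\d\gamma\le\varepsilon$. Define $T(x_1,x_2)\coloneqq(\Id-\Pi)x_1+\Pi x_2$ and $\Q''\coloneqq\pushforward{T}\gamma$: the map $(\pi_1,T)$ pushes $\gamma$ to a coupling of $\P$ and $\Q''$ of cost $\int c(x_1-T(x_1,x_2))\,\d\gamma=\int c(\Pi(x_1-x_2))\,\d\gamma\le\varepsilon$, so $\Q''\in\ball{\varepsilon}{c}{\P}$, while $AT(x_1,x_2)=Ax_2$ because $A(\Id-\Pi)=0$ and $A\Pi=A$, whence $\pushforward{A}\Q''=\pushforward{A}\Q$. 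Intuitively, $T$ retains the $\kernel A$-component of $\P$'s mass (invisible to $A$) while importing the $\kernel(A)^\perp$-component from $\Q$, so the cheaper $c\circ\Pi$-transport is realized as a genuine $c$-transport without altering the $A$-image.
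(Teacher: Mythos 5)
Your proof is correct. For the chain \eqref{eq:B_epsilon:arbitrary:A} you follow essentially the paper's own strategy: the workhorse inequality $\transportCost{c\circ\pinv{A}}{\pushforward{A}\mu}{\pushforward{A}\nu}\leq\transportCost{c}{\mu}{\nu}$, obtained from the coupling-pushforward result (\cref{lemma:stability set plans}) and orthomonotonicity applied to the projector $\Pi=\pinv{A}A$, followed by bookkeeping with $A\pinv{A}A=A$. The genuine divergence is in the full row-rank equality \eqref{eq:B_epsilon:surjective:A:1}, whose hard direction is $\ball{\varepsilon}{c\circ\pinv{A}}{\pushforward{A}\P}\subset\pushforward{A}\ball{\varepsilon}{c}{\P}$. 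The paper proves this from scratch: it pulls an optimal plan back through $\tensorProd{\pinv{A}}{\pinv{A}}$ and invokes the Disintegration Theorem to assemble a preimage measure $\bar\Q$ whose $\kernel(A)$-conditionals are copied from $\P$ and whose $\range(\adjoint{A})$-component comes from the plan, then verifies marginals and cost by hand via an auxiliary coupling. You instead split the statement into (a) the surjective equality \eqref{eq:B_epsilon:surjective:f} of \cref{thm:nonlin:trans}, applicable because $\pinv{A}$ is a genuine right inverse when $A$ has full row rank and $c\circ(\tensorProd{\pinv{A}}{\pinv{A}})\circ(\tensorProd{A}{A})=c\circ\Pi$ under translation invariance, and (b) the cost-relaxation identity $\pushforward{A}\ball{\varepsilon}{c\circ\Pi}{\P}=\pushforward{A}\ball{\varepsilon}{c}{\P}$, proved by pushing an optimal coupling through the explicit map $T(x_1,x_2)=(\Id-\Pi)x_1+\Pi x_2$. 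This realizes the same geometric idea as the paper's construction --- keep the kernel mass of $\P$, import the range mass of $\Q$ --- but through a measurable map rather than conditional measures, so no disintegration or gluing is needed; both arguments do rely on attainment of an optimal plan, which you correctly justify by lower semicontinuity of $c\circ\Pi$. Your route is more modular and more elementary: your Fact (b) holds for arbitrary $A$ and isolates exactly the gap between \cref{thm:nonlin:trans}(3) and the linear statement, whereas the paper's direct construction buys only self-containedness of that final inclusion at the cost of heavier measure-theoretic machinery.
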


In words, \cref{thm:lin:trans} asserts that the result of the propagation $\pushforward{A}{\ball{\varepsilon}{c}{\P}}$ is itself an \gls{acr:ot} ambiguity set (or can be upper bounded by one), with the same radius $\varepsilon$, propagated center $\pushforward{A}\P$, and an $A$-induced transportation cost $c\circ \pinv{A}$. 
The following example shows that the equality \eqref{eq:B_epsilon:surjective:A:1} does generally \emph{not} hold for non-surjective linear maps.
\begin{example}
Consider $X = Y = \eye_{2 \times 2}$, and
$
A\coloneqq
\begin{bsmallmatrix}
    1 & 0 \\ 0 & 0
\end{bsmallmatrix}
$
with pseudoinverse $\pinv{A}=A$, the quadratic transportation cost $c(x_1-x_2)=\norm{x_1-x_2}_2^2$, and the probability distribution $\P=\delta_{(0,0)}\in\mathcal{P}(\reals^2)$. Let $\Q=\delta_{(0,1)}\in\mathcal{P}(\reals^2)$. Since $(0,1)\not\in\range(A)$, $\Q$ does not belong to $\pushforward{A}\ball{\varepsilon}{c}{\P}$ regardless of $\varepsilon$. However, $\transportCost{c\circ\pinv{A}}{\Q}{\pushforward{A}\P}=\norm{\pinv{A}\begin{bsmallmatrix} 0 \\ 0\end{bsmallmatrix}-\pinv{A}\begin{bsmallmatrix} 0 \\ 1\end{bsmallmatrix}}_2^2=0$.
Thus, $\Q\in\ball{\varepsilon}{c \circ \pinv{A}}{\pushforward{A}\P}$, and 
$\pushforward{A}\ball{\varepsilon}{c}{\P}\subsetneq\ball{\varepsilon}{c \circ \pinv{A} }{\pushforward{A}\P}$.
\end{example}

\cref{thm:lin:trans} continues to hold if the \gls{acr:ot} ambiguity set $\ball{\varepsilon}{c}{\P}$ is defined over a subset $\mathcal X \subset \reals^n$ (i.e., $\ball{\varepsilon}{c}{\P}$ contains only distributions supported on $\mathcal X$). This is summarized in the next corollary, whose proof follows similar lines to the proof of \cref{thm:lin:trans} and is therefore omitted. 

\begin{corollary}
\label{cor:lin:trans:bounded}
Let $\mathcal X \subset \reals^n$, $\P \in \probSpace{\mathcal X}$ and let $\ball{\varepsilon}{c}{\P}$ be defined over $\mathcal X$. Moreover, let $A \in \reals^{m \times n}$ be full-row rank and let $c:\reals^n \to \nonnegativeReals$ satisfy \cref{assump:transportation:cost}. Then,
$
    \pushforward{A}\ball{\varepsilon}{c}{\P} 
    = 
    \ball{\varepsilon}{c \circ \pinv{A}}{\pushforward{A}\P},
$
with $\ball{\varepsilon}{c \circ \pinv{A}}{\pushforward{A}\P}$ restricted to all distributions supported on $A \mathcal X  \subseteq \reals^m$.
\end{corollary}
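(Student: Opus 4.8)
The plan is to mirror the surjective case of \cref{thm:lin:trans} line by line, threading the support constraints through each relation; the only genuinely new ingredient is verifying that the distribution built in the reverse inclusion is supported on $\mathcal X$. Since $A$ is full row-rank, $\pinv{A}$ is a right inverse, so $A\pinv{A}=\eye_m$ and $P\coloneqq\pinv{A}A$ is the $X$-orthogonal projection onto $\kernel(A)^\perp=\range(\pinv{A})$. For any $z$, writing $z=Pz+(\eye-P)z$ with $Pz\perp_X(\eye-P)z$ and $(\eye-P)z\in\kernel(A)$, orthomonotonicity (\cref{assump:transportation:cost}) delivers the pointwise bound $c\circ P\le c$, which is the workhorse of both inclusions.

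For the forward inclusion, let $\Q$ be supported on $\mathcal X$ with $\transportCost{c}{\P}{\Q}\le\varepsilon$; then $\pushforward{A}\Q$ is trivially supported on $A\mathcal X$. Instantiating \eqref{eq:W_c:arbitrary:f} of \cref{prop:Wc:pushforward} with $d=c\circ\pinv{A}$ gives $\transportCost{c\circ\pinv{A}}{\pushforward{A}\P}{\pushforward{A}\Q}=\transportCost{c\circ P}{\P}{\Q}$, and $c\circ P\le c$ together with monotonicity of $W_c$ in the cost (\cref{lemma:ambiguitysetmonotone}) yields $\transportCost{c\circ P}{\P}{\Q}\le\transportCost{c}{\P}{\Q}\le\varepsilon$. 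Hence $\pushforward{A}\Q\in\ball{\varepsilon}{c\circ\pinv{A}}{\pushforward{A}\P}$ with the prescribed support.

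For the reverse inclusion, fix $\Q'$ supported on $A\mathcal X$ with $\transportCost{c\circ\pinv{A}}{\pushforward{A}\P}{\Q'}\le\varepsilon$ and take an optimal coupling $\gamma'$ of $(\pushforward{A}\P,\Q')$, which is supported on $A\mathcal X\times A\mathcal X$. I would lift $\gamma'$ to $\reals^n\times\reals^n$ by retaining the source point and displacing it by the minimal vector reaching the target fiber, namely $x_2=x_1+\pinv{A}(w_2-Ax_1)$, gluing $\gamma'$ to a disintegration of $\P$ over $\pushforward{A}\P$. Since $A\pinv{A}=\eye_m$, the second marginal $\Q$ satisfies $\pushforward{A}\Q=\Q'$; since the displacement lies in $\kernel(A)^\perp$, it is the cheapest among all $v$ with $Av=w_2-w_1$ by orthomonotonicity, and the lifted plan has cost $\int c(\pinv{A}(w_1-w_2))\,\d\gamma'=\transportCost{c\circ\pinv{A}}{\pushforward{A}\P}{\Q'}\le\varepsilon$, whence $\transportCost{c}{\P}{\Q}\le\varepsilon$.

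The main obstacle, left implicit in the "similar lines" remark, is the support requirement $\Q(\mathcal X)=1$: the lift guarantees $Ax_2=w_2\in A\mathcal X$, but not a priori $x_2\in\mathcal X$. Decomposing $x_2=\pinv{A}w_2+(\eye-P)x_1$ shows the lift fixes the $\kernel(A)^\perp$-component to $\pinv{A}w_2$ and inherits the $\kernel(A)$-component from $x_1$; thus $x_2\in\mathcal X$ precisely when $\mathcal X$ is compatible with the splitting $\reals^n=\kernel(A)^\perp\oplus\kernel(A)$ (e.g.\ a box for a coordinate-type $A$, as in the applications), in which case the minimal-cost displacement already lands in $\mathcal X$. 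Forcing $x_2\in\mathcal X$ by adding a nonzero $\kernel(A)$-component would, by orthomonotonicity, only increase the cost, so this compatibility is exactly what is needed for equality to persist under the support restriction; the remaining measurability of $w_2\mapsto x_2$ is handled by the standard gluing/disintegration argument underlying \cref{lemma:stability set plans}.
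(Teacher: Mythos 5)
Your proposal takes exactly the route the paper intends---the proof is omitted there precisely because it is meant to ``follow similar lines'' to the full row-rank case of \cref{thm:lin:trans}---and your forward inclusion is complete and correct: the pushforward of any distribution supported on $\mathcal X$ is supported on $A\mathcal X$, and \cref{lemma:stability set plans} together with orthomonotonicity gives $\transportCost{c\circ\pinv{A}}{\pushforward{A}\P}{\pushforward{A}\Q}=\transportCost{c\circ\pinv{A}A}{\P}{\Q}\leq\transportCost{c}{\P}{\Q}\leq\varepsilon$. Your reverse-inclusion lift (retain the $\kernel(A)$-component of the source point, move the $\kernel(A)^\perp$-component to $\pinv{A}w_2$) is the same disintegration construction as in the paper's proof of \cref{thm:lin:trans}.

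The substantive point is the one you raise in your last paragraph, and you should state it more forcefully: the compatibility of $\mathcal X$ with the splitting $\reals^n=\kernel(A)^\perp\oplus\kernel(A)$ is not a technicality that a cleverer lift could avoid, because without it the corollary as stated is \emph{false}. Take $n=2$, $m=1$, $A=\begin{bsmallmatrix}1 & 0\end{bsmallmatrix}$, $c=\norm{\cdot}_2^2$, $\mathcal X=\{(0,0),(1,1)\}$, $\P=\diracDelta{(0,0)}$, $\varepsilon=1$. Every distribution supported on $\mathcal X$ has the form $\Q_p=(1-p)\diracDelta{(0,0)}+p\,\diracDelta{(1,1)}$ with $\transportCost{c}{\P}{\Q_p}=2p$, so $\pushforward{A}\ball{1}{c}{\P}=\{(1-p)\diracDelta{0}+p\,\diracDelta{1}:\,p\leq 1/2\}$. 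On the other hand, $\pinv{A}=\begin{bsmallmatrix}1\\0\end{bsmallmatrix}$ gives $\transportCost{c\circ\pinv{A}}{\diracDelta{0}}{(1-p)\diracDelta{0}+p\,\diracDelta{1}}=p$, so the right-hand ball restricted to $A\mathcal X=\{0,1\}$ contains every $p\leq 1$; in particular $\diracDelta{1}$ belongs to the right-hand side but not to the left. The failure is exactly your condition: $\pinv{A}(1)+(\Id-\pinv{A}A)(0,0)=(1,0)\notin\mathcal X$. So your argument proves the corollary only under the additional hypothesis that $\pinv{A}w+(\Id-\pinv{A}A)x\in\mathcal X$ for all $x\in\mathcal X$ and $w\in A\mathcal X$ (satisfied, e.g., when $\mathcal X=\pinv{A}\mathcal W\oplus\mathcal V$ with $\mathcal V\subset\kernel(A)$, which covers boxes under coordinate-type maps), and some such hypothesis must be added to the statement; for arbitrary $\mathcal X$ only the inclusion $\pushforward{A}\ball{\varepsilon}{c}{\P}\subset\ball{\varepsilon}{c\circ\pinv{A}}{\pushforward{A}\P}$ (restricted to $A\mathcal X$) survives. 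This is a gap in the paper's claim rather than in your proof.
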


\cref{thm:nonlin:trans,thm:lin:trans} provide a full description for the case of full-rank matrices. For invertible and full column-rank matrices (i.e., the injective case), the result follows directly from~\cref{thm:nonlin:trans} with no assumption on the transportation cost (i.e., for a \emph{general} $c:\reals^n\times\reals^n\to\nonnegativeReals$). The surjective case, instead, relies on~\cref{thm:lin:trans}, and therefore requires the transportation cost to be translation-invariant and orthomonotone.

We conclude this section with the application of \cref{thm:nonlin:trans,thm:lin:trans} to some important special cases.

\begin{corollary}
\label{cor:lin:trans:specialcase}
Let $\P\in\probSpace{\reals^n}$, $c:\reals^n\to\nonnegativeReals$ be a translation-invariant transportation cost over $\reals^n$, and $\varepsilon>0$.
\begin{enumerate}
    \item If $f:\reals^n\to\reals^n$ is a translation, i.e., $f(x) x + b$ for $b\in\reals^n$, then
    $
        \pushforward{f}\ball{\varepsilon}{c}{\P}=\ball{\varepsilon}{c}{\pushforward{f}\P}.
    $
    \item If $f:\reals^n\to\reals^n$ is a scaling, i.e., $f(x)=\alpha x$ for $\alpha\in\reals$, and $c$ satisfies $c(\alpha (x_1-x_2))=|\alpha|^p c(x_1-x_2)$, for some $p\geq 0$, then
    $
        \pushforward{f}\ball{\varepsilon}{c}{\P}=\ball{\varepsilon}{c/ \abs{\alpha}^p}{\pushforward{f}\P}=\ball{\abs{\alpha}^p \varepsilon}{c}{\pushforward{f}\P}.\;
    $
    In particular, if $\alpha=0$, then $\pushforward{f}\ball{\varepsilon}{c}{\P}=\{\diracDelta{0}\}$.
    \item If $f:\reals^n\to\reals^n$ is a rotation (resp., reflection) map, i.e., $f(x)=A x$, where $A$ is an orthogonal matrix with determinant 1 (resp., $-1$), and $c(x_1-x_2) = \psi(\norm{x_1-x_2})$ for $\psi:\nonnegativeReals\to\nonnegativeReals$, then
    $
        \pushforward{A}\ball{\varepsilon}{c}{\P}=\ball{\varepsilon}{c}{\pushforward{A}\P}.
    $
    \item Let the assumptions of \cref{thm:lin:trans} be satisfied. If $A$ is an orthogonal projection matrix, i.e., $A$ satisfies $A = \adjoint{A}$ and $A^2=A$, then
    $
        \pushforward{A}\ball{\varepsilon}{c}{\P} \subset  \ball{\varepsilon}{c\circ A}{\pushforward{A}\P}.  
    $
\end{enumerate}
\end{corollary}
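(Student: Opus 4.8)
The plan is to recognize each of the four cases as a direct application of the bijective branch of \cref{thm:nonlin:trans} (parts 1--3) or of the chain of inclusions in \cref{thm:lin:trans} (part 4); the only work will be to simplify the induced transportation cost $c\circ\inv{f}$ (resp.\ $c\circ\pinv{A}$) using the structure of the map. In each case I would first identify the relevant inverse and then show that the induced cost is either $c$ itself, a scalar multiple of $c$, or $c\circ A$.

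For the translation $f(x)=x+b$, the map is bijective with $\inv{f}(y)=y-b$, so \eqref{eq:B_epsilon:bijective:f} gives $\pushforward{f}\ball{\varepsilon}{c}{\P}=\ball{\varepsilon}{c\circ\inv{f}}{\pushforward{f}\P}$; since $\inv{f}(x_1)-\inv{f}(x_2)=x_1-x_2$, translation-invariance of $c$ yields $c\circ\inv{f}=c$, which is the claim. For the scaling $f(x)=\alpha x$ with $\alpha\neq 0$, again $f$ is bijective with $\inv{f}(y)=\alpha^{-1}y$, and the homogeneity hypothesis gives $c(\inv{f}(x_1)-\inv{f}(x_2))=c(\alpha^{-1}(x_1-x_2))=\abs{\alpha}^{-p}c(x_1-x_2)$, i.e.\ $c\circ\inv{f}=c/\abs{\alpha}^p$; this produces the first equality, and the second follows from the positive homogeneity of the \gls{acr:ot} discrepancy \eqref{eq:otcost}, $\transportCost{c/\abs{\alpha}^p}{\cdot}{\cdot}=\abs{\alpha}^{-p}\transportCost{c}{\cdot}{\cdot}$, so that the condition $W_{c/\abs{\alpha}^p}\leq\varepsilon$ is identical to $W_c\leq\abs{\alpha}^p\varepsilon$ and hence $\ball{\varepsilon}{c/\abs{\alpha}^p}{\cdot}=\ball{\abs{\alpha}^p\varepsilon}{c}{\cdot}$. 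When $\alpha=0$ one has $f\equiv 0$, so $\pushforward{f}\Q=\diracDelta{0}$ for every $\Q$ and thus $\pushforward{f}\ball{\varepsilon}{c}{\P}=\{\diracDelta{0}\}$ directly from \cref{def:pushforwad}. For the rotation/reflection case, an orthogonal $A$ is invertible with $\inv{A}=\adjoint{A}$ and preserves the norm, so for $c(z)=\psi(\norm{z})$ I would compute $c(\inv{A}(x_1-x_2))=\psi(\norm{\inv{A}(x_1-x_2)})=\psi(\norm{x_1-x_2})=c(x_1-x_2)$, giving $c\circ\inv{A}=c$ and closing the argument via \eqref{eq:B_epsilon:bijective:f}.

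The projection case is the only genuinely different one, since an orthogonal projection is in general neither injective nor surjective and \cref{thm:nonlin:trans} does not apply. Here I would instead invoke the last inclusion in \eqref{eq:B_epsilon:arbitrary:A} of \cref{thm:lin:trans}, namely $\pushforward{A}\ball{\varepsilon}{c}{\P}\subset\ball{\varepsilon}{c\circ\pinv{A}}{\pushforward{A}\P}$ (applicable since $c$ satisfies \cref{assump:transportation:cost}), and then show $\pinv{A}=A$. The latter follows by verifying the four Moore--Penrose axioms for a symmetric idempotent $A$: for example $A\pinv{A}A=A^3=A$ and $(A\pinv{A})^{*}=(A^2)^{*}=A$, using $A=\adjoint{A}$ and $A^2=A$. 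Substituting $\pinv{A}=A$ yields the stated inclusion $\pushforward{A}\ball{\varepsilon}{c}{\P}\subset\ball{\varepsilon}{c\circ A}{\pushforward{A}\P}$.

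I expect no serious obstacle, as each case reduces to a one-line computation on the cost. The two points needing slight care are the radius--cost conversion in the scaling case, which relies on the positive homogeneity $W_{\lambda c}=\lambda W_c$ of the \gls{acr:ot} discrepancy (immediate from its definition \eqref{eq:otcost}), and the identification of the ``natural'' inverse in each setting, i.e.\ $\inv{A}=\adjoint{A}$ for orthogonal $A$ and $\pinv{A}=A$ for a symmetric idempotent $A$.
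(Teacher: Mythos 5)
Your proposal is correct and follows essentially the same route as the paper's proof: parts 1--3 are instances of the bijective case \eqref{eq:B_epsilon:bijective:f} of \cref{thm:nonlin:trans} with the induced cost $c\circ(\inv{f}\times\inv{f})$ simplified via translation-invariance, $p$-homogeneity (plus the radius--cost conversion $W_{\lambda c}=\lambda W_c$), and norm-preservation of orthogonal maps, respectively, while part 4 uses \cref{thm:lin:trans} after identifying $\pinv{A}=A$ for a self-adjoint idempotent. Your added details (the Moore--Penrose verification, the $\alpha=0$ degenerate case, and the explicit ball-rescaling argument) are all sound and merely flesh out steps the paper leaves implicit.
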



\section{Convolution and Hadamard Product of \gls{acr:ot} Ambiguity Sets}
\label{sec:additive:multiplicative}

In this section, we study the effect of additive and multiplicative uncertainty. In the probability space, these two operations correspond to the convolution and Hadamard product of probability distributions (see~\cref{def:convolution,def:Hadamard}). As in the previous section, we do not restrict ourselves to fixed probability distributions but allow the uncertainty to be described by an \gls{acr:ot} ambiguity set. Accordingly we seek to characterize the two objects $\ball{\varepsilon_1}{c}{\P}\ast\ball{\varepsilon_2}{c}{\Q}$ and $\ball{\varepsilon_1}{c}{\P}\odot\ball{\varepsilon_2}{c}{\Q}$. Throughout this section, we assume that $\mathcal{X}=\reals^n$ (endowed with any distance).


\subsection{Convolution}
\label{subsec:additive:noise}

We start with the convolution of \gls{acr:ot} ambiguity sets. We proceed in three steps. In~\cref{lemma:coupling:convolution}, we prove a result at the level of the set of couplings. Armed with this result, in~\cref{prop:W:convolution} we study the convolution at the level of the \gls{acr:ot} discrepancy, which is used to define \gls{acr:ot} ambiguity sets. Our analysis culminates in \cref{thm:conv:trans}, where we show that the convolution of \gls{acr:ot} ambiguity sets can be captured by another \gls{acr:ot} ambiguity set. Throughout this subsection, we require the following assumption on the transportation cost.

\begin{assumption}
\label{assump:transportation:cost:conv}\,
\begin{enumerate}[(i)]
    \item
    The transportation cost $c$ is translation-invariant: $c(x_1,x_2)\coloneqq c(x_1 - x_2)$.

    \item
    There exists $p\geq 1$ so that $c^{\frac{1}{p}}$ satisfies triangle inequality; i.e., $c^{\frac{1}{p}}(x_1 - x_3) \leq c^{\frac{1}{p}}(x_1 - x_2) + c^{\frac{1}{p}}(x_2 - x_3)$ for all $x_1,x_2, x_3 \in \reals^n$.
\end{enumerate}
\end{assumption}

\cref{assump:transportation:cost:conv} is satisfied in particular by any power of a norm on $\reals^n$. At the level of coupling, we can provide a ``lower bound'': the set of couplings between $\P_1\ast\Q$ and $\P_2\ast\Q$ contains the set of couplings between $\P_1$ and $\P_2$ convolved with $\pushforward{(\tensorProd{\Id}{\Id})}\Q$. 

\begin{lemma}
\label{lemma:coupling:convolution}
Let $\P_1,\P_2, \Q \in \probSpace{\reals^ n}$. Then,
\begin{equation}\label{eq:lemma:coupling:convolution}
    \setPlans{\P_1}{\P_2}\ast \pushforward{(\Id\times\Id)}\Q  \subset \setPlans{\P_1\ast\Q}{\P_2 \ast \Q}.
\end{equation}
\end{lemma}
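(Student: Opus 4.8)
The plan is to prove the inclusion elementwise. I would fix an arbitrary coupling $\gamma \in \setPlans{\P_1}{\P_2}$ and show that $\gamma \ast \pushforward{(\Id\times\Id)}\Q$ is a coupling of $\P_1\ast\Q$ and $\P_2\ast\Q$, i.e., that it has these two measures as its marginals. Here the convolution is understood on the product space $\reals^n\times\reals^n\cong\reals^{2n}$, where addition acts coordinatewise, and $\mu\coloneqq\pushforward{(\Id\times\Id)}\Q$ is the law of $(y,y)$ for $y\sim\Q$, i.e. the copy of $\Q$ supported on the diagonal of $\reals^n\times\reals^n$.

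The cleanest route is the probabilistic representation. Let $(x_1,x_2)\sim\gamma$ and $y\sim\Q$ be independent; then $\gamma\ast\mu$ is by definition the law of $(x_1,x_2)+(y,y)=(x_1+y,\,x_2+y)$. I would then read off the two marginals directly: the first coordinate $x_1+y$ is the sum of the independent variables $x_1\sim\P_1$ and $y\sim\Q$, hence distributed according to $\P_1\ast\Q$; symmetrically, the second coordinate $x_2+y$ is distributed according to $\P_2\ast\Q$. Consequently $\gamma\ast\mu\in\setPlans{\P_1\ast\Q}{\P_2\ast\Q}$, which is exactly the claimed inclusion. If a purely measure-theoretic argument is preferred, the same computation is obtained by pushing $\gamma\ast\mu$ forward through the projections $\proj{1},\proj{2}$ and invoking \cref{def:convolution}, interchanging integrals via Fubini over the product measure $\gamma\otimes\mu$; I expect this to be routine bookkeeping.

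The only point requiring care, and the conceptual heart of the statement, is that the diagonal support of $\mu$ forces the \emph{same} noise realization $y$ to be added to both coordinates. This is precisely what keeps $\gamma\ast\mu$ a genuine coupling of the two convolved measures, and it also explains why the inclusion is generally strict: an arbitrary element of $\setPlans{\P_1\ast\Q}{\P_2\ast\Q}$ may correlate its two marginals through noise acting independently on each coordinate, which is not captured by the diagonal construction. I do not anticipate a serious obstacle; the main thing to verify rigorously is that the product-space convolution and the diagonal pushforward interact as described, which follows immediately from the independence of $\gamma$ and $\mu$.
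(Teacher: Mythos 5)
Your proposal is correct and follows essentially the same route as the paper: both fix $\gamma\in\setPlans{\P_1}{\P_2}$ and verify that $\gamma\ast\pushforward{(\Id\times\Id)}\Q$ has marginals $\P_1\ast\Q$ and $\P_2\ast\Q$, the paper doing so via bounded Borel test functions and the definition of convolution, which is exactly the ``routine bookkeeping'' variant you sketch. Your probabilistic phrasing (the law of $(x_1+y,\,x_2+y)$ with the same $y$ added to both coordinates) is just the random-variable rendering of that computation, so there is no substantive difference.
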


\begin{remark}
In general, the inclusion~\eqref{eq:lemma:coupling:convolution} cannot be improved to equality. For instance, let $n=1$ and $\P_1=\P_2=\diracDelta{0}$. Then, $\setPlans{\P_1}{\P_2}=\{\diracDelta{(0,0)}\}$. Moreover, let $\Q= 0.5\,\diracDelta{1}+0.5\,\diracDelta{2}$. Then, $\setPlans{\P_1}{\P_2}\ast\pushforward{(\tensorProd{\Id}{\Id})}\Q=\{\pushforward{(\tensorProd{\Id}{\Id})}\Q\}=\{0.5\,\diracDelta{(1,1)}+0.5\,\diracDelta{(2,2)}\}$, i.e., it contains only one coupling. However, $\P_1\ast\Q=\P_2\ast\Q=\Q$, and so $\setPlans{\P_1\ast\Q}{\P_2\ast\Q}=\setPlans{\Q}{\Q}$, which is not a singleton since there are infinitely many couplings between $\Q$ and itself (e.g., $\pushforward{(\tensorProd{\Id}{\Id})}{\Q}$, $\Q\otimes\Q$, etc.). 
\end{remark}

We now study the \gls{acr:ot} discrepancy between two probability distributions, both convolved with $\Q$. Thanks to~\cref{lemma:coupling:convolution}, we can produce an upper bound on the \gls{acr:ot} discrepancy by restricting ourselves to couplings of the form $\setPlans{\P_1}{\P_2}\ast\pushforward{(\tensorProd{\Id}{\Id})}\Q$. This observation yields a contraction property.

\begin{proposition}
\label{prop:W:convolution}
Let $\P_1,\P_2, \Q \in \probSpace{\reals^ n}$ and let $c:\reals^n\to\nonnegativeReals$ satisfy~\cref{assump:transportation:cost:conv}(i). Then,
\begin{equation}\label{eq:prop:W:convolution}
    \transportCost{c}{\P_1 \ast \Q}{\P_2 \ast \Q}\leqslant \transportCost{c}{\P_1 }{\P_2}.
\end{equation}
\end{proposition}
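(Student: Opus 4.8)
The plan is to exploit Lemma~\ref{lemma:coupling:convolution}, which tells us that every coupling of the form $\gamma \ast \pushforward{(\Id\times\Id)}\Q$, with $\gamma\in\setPlans{\P_1}{\P_2}$, is a legitimate coupling of $\P_1\ast\Q$ and $\P_2\ast\Q$. Since $\transportCost{c}{\P_1\ast\Q}{\P_2\ast\Q}$ is an infimum of the transport cost over \emph{all} couplings in $\setPlans{\P_1\ast\Q}{\P_2\ast\Q}$, restricting the infimum to this smaller (convolved) subfamily can only increase its value. This immediately yields
\begin{equation*}
    \transportCost{c}{\P_1\ast\Q}{\P_2\ast\Q}
    \leq
    \inf_{\gamma\in\setPlans{\P_1}{\P_2}}
    \int_{\reals^n\times\reals^n} c(z_1-z_2)\,\d\bigl(\gamma\ast\pushforward{(\Id\times\Id)}\Q\bigr)(z_1,z_2).
\end{equation*}
So the first step is simply to write down this restriction-of-the-infimum inequality cleanly, recalling the definition \eqref{eq:otcost} of the OT discrepancy.

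The second step is to evaluate the integral on the right-hand side for a fixed $\gamma$. Here I would unfold the definition of convolution (Definition~\ref{def:convolution}) applied to the product space $\reals^n\times\reals^n$: a point drawn from $\gamma\ast\pushforward{(\Id\times\Id)}\Q$ has the form $(x_1+y,\,x_2+y)$ where $(x_1,x_2)\sim\gamma$ and $y\sim\Q$ (the diagonal pushforward $\pushforward{(\Id\times\Id)}\Q$ adds the \emph{same} $y$ to both coordinates). Consequently the integrand becomes
\begin{equation*}
    c\bigl((x_1+y)-(x_2+y)\bigr) = c(x_1 - x_2),
\end{equation*}
where translation-invariance (Assumption~\ref{assump:transportation:cost:conv}(i)) is exactly what makes the shared noise $y$ cancel. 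The integral therefore factorizes: integrating out $y\sim\Q$ contributes a factor of $1$, and what remains is $\int c(x_1-x_2)\,\d\gamma(x_1,x_2)$.

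Combining these two steps, the right-hand side collapses to $\inf_{\gamma\in\setPlans{\P_1}{\P_2}}\int c(x_1-x_2)\,\d\gamma(x_1,x_2)$, which is precisely $\transportCost{c}{\P_1}{\P_2}$, giving the claimed contraction \eqref{eq:prop:W:convolution}. I expect no genuine obstacle here — the result is essentially a one-line consequence of the coupling lemma plus translation-invariance — but the one point demanding care is the bookkeeping in the second step: making sure the convolution on the product space $\reals^n\times\reals^n$ is interpreted with the \emph{diagonal} pushforward $\pushforward{(\Id\times\Id)}\Q$ so that the same $y$ is added to both marginals, and verifying (via Fubini, using nonnegativity/measurability of $c$) that the $y$-integration is clean and that the resulting measure indeed has the right marginals. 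Note that only part (i) of Assumption~\ref{assump:transportation:cost:conv} is needed for this proposition; the triangle-inequality condition (ii) is reserved for the subsequent theorem.
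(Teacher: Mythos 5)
Your proposal is correct and follows essentially the same route as the paper's proof: restrict the infimum in \eqref{eq:otcost} to the subfamily of couplings $\setPlans{\P_1}{\P_2}\ast\pushforward{(\Id\times\Id)}\Q$ provided by \cref{lemma:coupling:convolution}, then use translation-invariance to cancel the common shift $y$ and integrate it out. Your observation that only \cref{assump:transportation:cost:conv}(i) is needed here also matches the paper exactly.
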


\cref{prop:W:convolution} shows that the \gls{acr:ot} discrepancy between $\P_1\ast\Q$ and $\P_2\ast\Q$ is upper bounded by the \gls{acr:ot} discrepancy between $\P_1$ and $\P_2$.
Adding noise to probability distributions before computing the Wasserstein distance has been used in machine learning to devise a \emph{smoothed} version of the Wasserstein distance. 
With~\cref{prop:W:convolution}, we can give a general result on the convolution of \gls{acr:ot} ambiguity sets. 

\begin{theorem}[Convolution]
\label{thm:conv:trans}
Let $\P, \Q \in \probSpace{\reals^ n}$, and let $c:\reals^n \to \nonnegativeReals$ satisfy \cref{assump:transportation:cost:conv}. Then,
\begin{equation}\label{thm:conv:trans:upperbound}
    \ball{\varepsilon_1}{c}{\P}\ast \ball{\varepsilon_2}{c}{\Q}
    \subset
    \ball{(\varepsilon_1^{1/p}+\varepsilon_2^{1/p})^p}{c}{\P\ast \Q}.
\end{equation}
\end{theorem}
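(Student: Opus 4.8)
The plan is to reduce the set inclusion to a single pointwise distance estimate, and to obtain that estimate by combining two ingredients: the contraction property of \cref{prop:W:convolution} and the triangle inequality for the type-$p$ transport discrepancy $\transportCost{c}{\cdot}{\cdot}^{1/p}$. Concretely, I pick an arbitrary element of the left-hand side; by the definition of the convolution of two ambiguity sets it has the form $\P'\ast\Q'$ with $\P'\in\ball{\varepsilon_1}{c}{\P}$ and $\Q'\in\ball{\varepsilon_2}{c}{\Q}$, so that $\transportCost{c}{\P}{\P'}\leq\varepsilon_1$ and $\transportCost{c}{\Q}{\Q'}\leq\varepsilon_2$. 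The goal then becomes showing $\transportCost{c}{\P\ast\Q}{\P'\ast\Q'}\leq(\varepsilon_1^{1/p}+\varepsilon_2^{1/p})^p$, which is exactly the statement that $\P'\ast\Q'$ lies in the right-hand side.

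Granting the triangle inequality for $\transportCost{c}{\cdot}{\cdot}^{1/p}$ (discussed below), I insert the intermediate distribution $\P'\ast\Q$ and bound
\begin{equation*}
\transportCost{c}{\P\ast\Q}{\P'\ast\Q'}^{1/p}
\leq
\transportCost{c}{\P\ast\Q}{\P'\ast\Q}^{1/p}
+
\transportCost{c}{\P'\ast\Q}{\P'\ast\Q'}^{1/p}.
\end{equation*}
To the first term I apply \cref{prop:W:convolution} with the common convolution factor $\Q$, giving $\transportCost{c}{\P\ast\Q}{\P'\ast\Q}\leq\transportCost{c}{\P}{\P'}\leq\varepsilon_1$. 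For the second term I use the commutativity of convolution to write $\P'\ast\Q=\Q\ast\P'$ and $\P'\ast\Q'=\Q'\ast\P'$, so that both arguments now share the common factor $\P'$; \cref{prop:W:convolution} then yields $\transportCost{c}{\P'\ast\Q}{\P'\ast\Q'}\leq\transportCost{c}{\Q}{\Q'}\leq\varepsilon_2$. Substituting these two bounds and raising to the power $p$ produces exactly the claimed radius $(\varepsilon_1^{1/p}+\varepsilon_2^{1/p})^p$, completing the membership argument.

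The main obstacle is the triangle inequality for $\transportCost{c}{\cdot}{\cdot}^{1/p}$, which is precisely where \cref{assump:transportation:cost:conv}(ii) is needed; everything else above is a short, mechanical combination. I would establish it by the classical gluing argument: given couplings $\gamma_{12}\in\setPlans{\mu_1}{\mu_2}$ and $\gamma_{23}\in\setPlans{\mu_2}{\mu_3}$, glue them along their shared marginal $\mu_2$ to obtain a measure $\gamma_{123}$ on $\reals^n\times\reals^n\times\reals^n$, and project onto the first and third coordinates to produce an admissible coupling $\gamma_{13}\in\setPlans{\mu_1}{\mu_3}$. Bounding $\transportCost{c}{\mu_1}{\mu_3}^{1/p}$ by the $L^p(\gamma_{123})$-norm of $(x_1,x_2,x_3)\mapsto c^{1/p}(x_1-x_3)$, invoking the triangle inequality $c^{1/p}(x_1-x_3)\leq c^{1/p}(x_1-x_2)+c^{1/p}(x_2-x_3)$ supplied by the assumption, and then applying Minkowski's inequality in $L^p(\gamma_{123})$ splits this into the sum of the $L^p$-norms of $c^{1/p}(x_1-x_2)$ and $c^{1/p}(x_2-x_3)$, i.e.\ into $\transportCost{c}{\mu_1}{\mu_2}^{1/p}+\transportCost{c}{\mu_2}{\mu_3}^{1/p}$ after optimizing over the gluing. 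This is the standard proof that the type-$p$ Wasserstein distance induced by the metric $c^{1/p}$ is itself a metric, and I would either carry it out or cite it; once it is in place the theorem follows immediately from the two-step estimate above.
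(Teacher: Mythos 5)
Your proposal is correct and follows essentially the same route as the paper's proof: unpack an element of the left-hand side as $\P'\ast\Q'$, apply the triangle inequality for $\transportCost{c}{\cdot}{\cdot}^{1/p}$ (justified exactly as you do, via the gluing/Minkowski argument behind the type-$p$ Wasserstein triangle inequality, which the paper cites rather than rewrites) with an intermediate distribution, and bound each of the two terms by the contraction property of \cref{prop:W:convolution}. The only cosmetic difference is the choice of intermediate distribution ($\P'\ast\Q$ in your argument versus $\P\ast\tilde\Q$ in the paper's), which is an immaterial symmetric variant.
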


\begin{remark}
In general, the upper bound~\labelcref{thm:conv:trans:upperbound} cannot be improved to equality. For instance, let $n=1$, $c(x_1-x_2)=|x_1-x_2|$, $\varepsilon_1=0.1$, and $\varepsilon_2 = 0$. Let $\P = \delta_{1}$ and $\Q=0.5\,\delta_{0.9}+0.5\,\delta_{1.1}$, and consider the \gls{acr:ot} ambiguity set $\ball{\varepsilon_1}{c}{\P \ast \Q}=\ball{0.1}{c}{0.5\,\delta_{1.9}+0.5\,\delta_{2.1}}$. The probability distribution $\diracDelta{2}$ belongs to $\ball{\varepsilon_1+\varepsilon_2}{c}{\P \ast \Q}$. However, $\diracDelta{2}\notin \ball{\varepsilon_1}{c}{\P} \ast \ball{\varepsilon_2}{c}{\Q}=\ball{\varepsilon_1}{c}{\P} \ast \Q$, since the convolution of any distribution in $\ball{\varepsilon_1}{c}{\P}$ with $\Q$ is supported on at least two distinct points. Thus, $\ball{\varepsilon_1}{c}{\P}\ast \Q\subsetneq \ball{\varepsilon_1}{c}{\P \ast \Q}$.
Nonetheless, in some special cases, we do have $\ball{\varepsilon_1}{c}{\P} \ast \Q = \ball{\varepsilon_1}{c}{\P\ast \Q}$; e.g., if $\Q = \diracDelta{x}$, then convolution with $\Q$ is a translation by $x$, and the equality follows from~\cref{cor:lin:trans:specialcase}.
\end{remark}

\cref{thm:conv:trans} continues to hold if the \gls{acr:ot} ambiguity sets $\ball{\varepsilon_1}{c}{\P}$ and $\ball{\varepsilon_2}{c}{\Q}$ are defined over some subsets $\mathcal X, \mathcal Y \subset \reals^n$. This is summarized in the next corollary, whose proof follows similar lines to the proof of \cref{thm:conv:trans} and is thus omitted. 

\begin{corollary}
\label{cor:conv:trans}
Let $\P \in \probSpace{\mathcal X}$ and $\Q \in \probSpace{\mathcal Y}$, with $\mathcal X, \mathcal Y \subset \reals^n$. Moreover, let $\ball{\varepsilon_1}{c}{\P}$ be defined over $\mathcal X$ and $\ball{\varepsilon_2}{c}{\Q}$ be defined over $\mathcal Y$. Finally, let $c:\reals^n \to \nonnegativeReals$ satisfy \cref{assump:transportation:cost:conv}. Then,
\begin{equation}\label{thm:conv:trans:upperbound:restricted}
    \ball{\varepsilon_1}{c}{\P}\ast \ball{\varepsilon_2}{c}{\Q}
    \subset
    \ball{(\varepsilon_1^{1/p}+\varepsilon_2^{1/p})^p}{c}{\P\ast \Q},
\end{equation}
with $\ball{(\varepsilon_1^{1/p}+\varepsilon_2^{1/p})^p}{c}{\P\ast \Q}$ restricted to all distributions supported on the Minkowski sum $\mathcal X \oplus \mathcal Y$.
\end{corollary}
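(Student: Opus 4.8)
The plan is to prove the set inclusion elementwise. Fix arbitrary $\P' \in \ball{\varepsilon_1}{c}{\P}$ and $\Q' \in \ball{\varepsilon_2}{c}{\Q}$, so that $\transportCost{c}{\P}{\P'} \leq \varepsilon_1$ and $\transportCost{c}{\Q}{\Q'} \leq \varepsilon_2$. Since every element of $\ball{\varepsilon_1}{c}{\P} \ast \ball{\varepsilon_2}{c}{\Q}$ is of the form $\P' \ast \Q'$, it suffices to prove the single bound $\transportCost{c}{\P \ast \Q}{\P' \ast \Q'} \leq (\varepsilon_1^{1/p}+\varepsilon_2^{1/p})^p$, which is precisely the membership $\P' \ast \Q' \in \ball{(\varepsilon_1^{1/p}+\varepsilon_2^{1/p})^p}{c}{\P\ast \Q}$.

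The backbone of the argument is a triangle inequality for $\transportCost{c}{\cdot}{\cdot}^{1/p}$, which I would first establish. Under \cref{assump:transportation:cost:conv}(ii) the function $c^{1/p}$ satisfies the triangle inequality; combining the standard gluing lemma (which couples two distributions through an intermediate one) with Minkowski's inequality in $L^p$ then shows that $\transportCost{c}{\cdot}{\cdot}^{1/p}$ itself obeys the triangle inequality. Applying this with the intermediate distribution $\P \ast \Q'$ gives
\begin{equation*}
    \transportCost{c}{\P \ast \Q}{\P' \ast \Q'}^{1/p}
    \leq
    \transportCost{c}{\P \ast \Q}{\P \ast \Q'}^{1/p}
    +
    \transportCost{c}{\P \ast \Q'}{\P' \ast \Q'}^{1/p}.
\end{equation*}
I would then bound each term on the right using the contraction property of \cref{prop:W:convolution}. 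In the second term, both arguments share the common convolution factor $\Q'$, so \cref{prop:W:convolution} yields $\transportCost{c}{\P \ast \Q'}{\P' \ast \Q'} \leq \transportCost{c}{\P}{\P'} \leq \varepsilon_1$. In the first term, both arguments share the common factor $\P$; using commutativity of convolution ($\P \ast \Q = \Q \ast \P$) together with \cref{prop:W:convolution} gives $\transportCost{c}{\P \ast \Q}{\P \ast \Q'} \leq \transportCost{c}{\Q}{\Q'} \leq \varepsilon_2$. Substituting these two bounds and raising the resulting inequality to the power $p$ yields $\transportCost{c}{\P \ast \Q}{\P' \ast \Q'} \leq (\varepsilon_1^{1/p}+\varepsilon_2^{1/p})^p$, as desired.

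The main obstacle is establishing the triangle inequality for $\transportCost{c}{\cdot}{\cdot}^{1/p}$, and this is exactly where \cref{assump:transportation:cost:conv}(ii) is indispensable: without it there is no clean way to chain the two single-factor perturbation bounds. The gluing step is standard in optimal transport, but some care is needed because $c$ is not assumed to be a genuine metric --- only $c^{1/p}$ is required to satisfy the triangle inequality (not necessarily symmetry or positivity), so I would verify that the gluing-plus-Minkowski argument goes through under this weaker hypothesis. The two invocations of \cref{prop:W:convolution} and the use of commutativity of convolution are then entirely routine.
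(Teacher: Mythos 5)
Your argument for the inclusion itself is correct and is essentially the paper's own argument: the paper omits the proof of this corollary, stating only that it follows the same lines as the proof of \cref{thm:conv:trans}, and that proof is exactly what you wrote --- fix $\P' \in \ball{\varepsilon_1}{c}{\P}$ and $\Q' \in \ball{\varepsilon_2}{c}{\Q}$, establish the triangle inequality for $\transportCost{c}{\cdot}{\cdot}^{1/p}$ from \cref{assump:transportation:cost:conv}(ii) (via gluing plus Minkowski, as for the type-$p$ Wasserstein distance), insert the intermediate distribution $\P \ast \Q'$, and apply the contraction property of \cref{prop:W:convolution} to each of the two resulting terms. Your explicit remark that commutativity of convolution is needed to put the common factor in the position required by \cref{prop:W:convolution} is a detail the paper leaves implicit.

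There is, however, one part of the statement you never address, and it is precisely what distinguishes this corollary from \cref{thm:conv:trans}: the claim that the inclusion holds with the right-hand side \emph{restricted} to distributions supported on the Minkowski sum $\mathcal X \oplus \mathcal Y$. Proving membership of $\P' \ast \Q'$ in the ball $\ball{(\varepsilon_1^{1/p}+\varepsilon_2^{1/p})^p}{c}{\P\ast \Q}$ is not enough for that; you must also show that every element of the left-hand side concentrates its mass on $\mathcal X \oplus \mathcal Y$. Fortunately this is a one-line observation: since $\ball{\varepsilon_1}{c}{\P}$ is defined over $\mathcal X$, the distribution $\P'$ is supported on $\mathcal X$, and likewise $\Q'$ is supported on $\mathcal Y$; hence for $(\P' \otimes \Q')$-almost every pair $(x,y)$ we have $x+y \in \mathcal X \oplus \mathcal Y$, and \cref{def:convolution} then gives
\begin{equation*}
    (\P' \ast \Q')(\mathcal A)
    =
    \int_{\reals^n\times\reals^n}\indicator{\mathcal A}(x+y)\,\d(\P'\otimes\Q')(x,y)
    = 0
\end{equation*}
for every Borel set $\mathcal A$ disjoint from $\mathcal X \oplus \mathcal Y$. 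Adding this observation to your argument completes the proof of the corollary as stated.
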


We conclude this subsection by specializing \cref{thm:conv:trans} to the case of a known probability distribution $\Q$.

\begin{corollary}
\label{cor:conv:known:noise}
Let $\P, \Q \in \probSpace{\reals^ n}$, and let $c:\reals^n \to \nonnegativeReals$ satisfy \cref{assump:transportation:cost:conv}. If $\varepsilon_2=0$ and $c$ is positive definite, then $\ball{0}{c}{\Q} = \{\Q\}$ and $\ball{\varepsilon_1}{c}{\P}\ast \Q \subset \ball{\varepsilon_1}{c}{\P\ast \Q}$.
\end{corollary}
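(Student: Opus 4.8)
The plan is to observe that, thanks to \cref{thm:conv:trans}, the entire corollary reduces to the single identity $\ball{0}{c}{\Q}=\{\Q\}$. Indeed, once this is established, the inclusion follows immediately: setting $\varepsilon_2=0$ in \cref{thm:conv:trans} and using that $0^{1/p}=0$ for every $p\geq 1$, the radius $(\varepsilon_1^{1/p}+\varepsilon_2^{1/p})^p$ collapses to $\varepsilon_1$, so that $\ball{\varepsilon_1}{c}{\P}\ast\ball{0}{c}{\Q}\subset\ball{\varepsilon_1}{c}{\P\ast\Q}$; and since $\ball{0}{c}{\Q}=\{\Q\}$, the left-hand side is exactly $\ball{\varepsilon_1}{c}{\P}\ast\Q$. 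Hence all the work is concentrated in characterizing the zero-radius ambiguity set, and positive definiteness of $c$ is precisely what this step requires.

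To prove $\ball{0}{c}{\Q}=\{\Q\}$, I would argue both inclusions. For $\{\Q\}\subseteq\ball{0}{c}{\Q}$, note that the diagonal coupling $\pushforward{(\Id\times\Id)}\Q$ has marginals $\Q$ and $\Q$, and, by positive definiteness, $c(0)=0$; hence $\transportCost{c}{\Q}{\Q}\leq\int c(x-x)\,\d\Q(x)=0$, so $\Q\in\ball{0}{c}{\Q}$. For the reverse inclusion, take $\Q'\in\ball{0}{c}{\Q}$. Since $c\geq 0$ forces $\transportCost{c}{\Q}{\Q'}\geq 0$, the defining constraint $\transportCost{c}{\Q}{\Q'}\leq 0$ yields $\transportCost{c}{\Q}{\Q'}=0$.

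The key step is then to convert the vanishing of the \gls{acr:ot} discrepancy into equality of the two distributions. Because $c$ is non-negative and lower semi-continuous and the set of couplings $\setPlans{\Q}{\Q'}$ is weakly compact (the marginals being fixed), the functional $\gamma\mapsto\int c\,\d\gamma$ is weakly lower semi-continuous and attains its infimum at some optimal plan $\gamma^\star$. From $\int c(x_1-x_2)\,\d\gamma^\star(x_1,x_2)=0$ and $c\geq 0$ I deduce $c(x_1-x_2)=0$ for $\gamma^\star$-almost every $(x_1,x_2)$; positive definiteness then gives $x_1=x_2$ $\gamma^\star$-almost surely, i.e.\ $\gamma^\star$ is concentrated on the diagonal. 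A coupling supported on the diagonal necessarily has identical marginals, so $\Q=\Q'$, completing the characterization.

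I expect the only delicate point to be this last conversion, which rests on the existence of an optimal coupling and the identification of its support with the diagonal; the attainment is standard under lower semi-continuity of $c$ and tightness of $\setPlans{\Q}{\Q'}$. Alternatively, one could bypass attainment by noting that \cref{assump:transportation:cost:conv}(ii) together with positive definiteness makes $c^{1/p}$ essentially a metric (modulo symmetry of $c$), so that $(\transportCost{c}{\cdot}{\cdot})^{1/p}$ is the associated Wasserstein distance, which separates points; then $\transportCost{c}{\Q}{\Q'}=0$ directly forces $\Q=\Q'$. Everything else is bookkeeping around \cref{thm:conv:trans}.
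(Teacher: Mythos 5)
Your proposal is correct and follows exactly the route the paper intends: the paper states this corollary without an explicit proof, treating it as the immediate specialization of \cref{thm:conv:trans} to $\varepsilon_2=0$ combined with the observation that $\ball{0}{c}{\Q}=\{\Q\}$ when $c$ is positive definite. Your justification of that observation (an optimal coupling exists by lower semi-continuity of $c$ and weak compactness of $\setPlans{\Q}{\Q'}$, zero total cost forces the plan to concentrate on the diagonal, and a diagonal coupling has equal marginals) is sound and supplies precisely the detail the paper leaves implicit.
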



\subsection{Hadamard Product}
\label{subsec:multiplicative:noise}

We now study the Hadamard product of \gls{acr:ot} ambiguity sets. As above, we proceed in three steps. In~\cref{lemma:coupling:Hadamard}, we prove a result at the level of the set of couplings. Armed with this result, in~\cref{prop:W:Hadamard} we study the Hadamard product at the level of the \gls{acr:ot} discrepancy. Our analysis culminates in \cref{thm:Hadamard}, where we show that the Hadamard product of \gls{acr:ot} ambiguity sets can be captured by another \gls{acr:ot} ambiguity set. Throughout this subsection, we require the following assumption on the transportation cost.

\begin{assumption}
\label{assump:transportation:cost:Hadamard}
\,
\begin{enumerate}[(i)]
    \item The transportation costs $c$ is translation-invariant and satisfies $c(x_1\cdot x_3-x_2\cdot x_3)\leq c(x_1-x_2)\, c(x_3)$ for all $x_1, x_2, x_3 \in \reals^n$.

    \item There exists $p\geq 1$ such that $c^{\frac{1}{p}}$ satisfies triangle inequality; i.e., $c^\frac{1}{p}(x_1 - x_3) \leq c^\frac{1}{p}(x_1 - x_2) + c^\frac{1}{p}(x_2 - x_3)$ for all $x_1,x_2, x_3 \in \reals^n$;
    
\end{enumerate}

\end{assumption}

\cref{assump:transportation:cost:Hadamard} is satisfied in particular by any power of a norm on $\reals^n$. Similarly to~\cref{lemma:coupling:convolution} for the convolution operation, at the level of coupling we can provide a ``lower bound'': the set of couplings between $\P_1\odot\Q$ and $\P_2\odot\Q$ contains Hadamard product of $\setPlans{\P_1}{\P_2}$ and $\pushforward{(\tensorProd{\Id}{\Id})}\Q$.

\begin{lemma}
\label{lemma:coupling:Hadamard}
Let $\P_1,\P_2, \Q \in \probSpace{\reals^n}$. Then,
\begin{align*}
    \setPlans{\P_1}{\P_2}\odot \pushforward{(\Id\times\Id)}\Q  \subset \setPlans{\P_1\odot\Q}{\P_2 \odot \Q}.
\end{align*}
\end{lemma}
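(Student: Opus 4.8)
The plan is to mirror the proof of \cref{lemma:coupling:convolution}, replacing the pointwise sum $x+y$ by the Hadamard product $x\cdot y$ throughout; at the level of couplings this substitution is inert, so the argument reduces to an explicit marginal computation. I will take an arbitrary element of the left-hand side, realize it as the law of a concrete random vector built from a coupling and an independent copy of $\Q$, and then verify its two marginals directly.

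Concretely, fix $\gamma\in\setPlans{\P_1}{\P_2}$ and set $\pi\coloneqq\gamma\odot\pushforward{(\Id\times\Id)}\Q$; every element of the left-hand side has this form. Here $\setPlans{\P_1}{\P_2}\subset\probSpace{\reals^n\times\reals^n}$ and $\pushforward{(\Id\times\Id)}\Q\subset\probSpace{\reals^n\times\reals^n}$, so the Hadamard product is taken on $\reals^n\times\reals^n\cong\reals^{2n}$, acting blockwise. Probabilistically, if $(X_1,X_2)\sim\gamma$ and $Y\sim\Q$ are independent, then $\pushforward{(\Id\times\Id)}\Q$ is the law of $(Y,Y)$, and by \cref{def:Hadamard} the measure $\pi$ is exactly the law of $(X_1,X_2)\cdot(Y,Y)=(X_1\cdot Y,\,X_2\cdot Y)$.

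The key step is the marginal computation. Since $\gamma$ has first marginal $\P_1$ and $Y\sim\Q$ is independent of $(X_1,X_2)$, the pair $(X_1,Y)$ has law $\P_1\otimes\Q$; hence the first component $X_1\cdot Y$ of $\pi$ has law $\P_1\odot\Q$, again by \cref{def:Hadamard}. Symmetrically, the second component $X_2\cdot Y$ has law $\P_2\odot\Q$. Therefore $\pi\in\setPlans{\P_1\odot\Q}{\P_2\odot\Q}$, which is exactly the claimed inclusion. Formally, I would write $\pi=\pushforward{H}(\gamma\otimes\pushforward{(\Id\times\Id)}\Q)$ with $H((x_1,x_2),(y_1,y_2))\coloneqq(x_1\cdot y_1,\,x_2\cdot y_2)$, compose with the coordinate projections $\pi_1,\pi_2:\reals^n\times\reals^n\to\reals^n$, and use Fubini to reduce each projected pushforward to a marginal of the product measure.

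I do not anticipate a genuine obstacle: the result is a bookkeeping statement about marginals. It is worth noting that the diagonal structure of $\pushforward{(\Id\times\Id)}\Q$ (the same $Y$ multiplying both $X_1$ and $X_2$) is not even needed to certify the marginals; it is what will matter in the subsequent discrepancy bound (the Hadamard analogue of \cref{prop:W:convolution}), where the shared factor $Y$ together with the submultiplicativity in \cref{assump:transportation:cost:Hadamard}(i) will control the transport cost. The only point requiring minor care is keeping the two levels of $\reals^n$-structure straight, so that the Hadamard product on $\reals^n\times\reals^n$ acts blockwise and each projection lands in the correct factor.
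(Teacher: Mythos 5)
Your proposal is correct and takes essentially the same approach as the paper: both fix an arbitrary $\gamma\in\setPlans{\P_1}{\P_2}$, form $\gamma\odot\pushforward{(\Id\times\Id)}\Q$, and verify that its two marginals are $\P_1\odot\Q$ and $\P_2\odot\Q$ by reducing to the product measure $\gamma\otimes\pushforward{(\Id\times\Id)}\Q$ and invoking \cref{def:Hadamard}. The paper writes this marginal check with Borel bounded test functions while you phrase it with independent random vectors $(X_1,X_2)\sim\gamma$, $Y\sim\Q$, but these are the same computation; your side remark that the diagonal structure of $\pushforward{(\Id\times\Id)}\Q$ is only needed later, in \cref{prop:W:Hadamard}, is also accurate.
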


We now study the \gls{acr:ot} discrepancy between two $\P_1 \odot \Q$ and $\P_2 \odot \Q$. Thanks to~\cref{lemma:coupling:Hadamard}, we can produce an upper bound of the \gls{acr:ot} discrepancy by restricting ourselves to couplings of the form $\setPlans{\P_1}{\P_2}\odot\pushforward{(\tensorProd{\Id}{\Id})}\Q$. 

\begin{proposition}
\label{prop:W:Hadamard}
Let $\P_1,\P_2, \Q \in \probSpace{\reals^n}$, and let $c:\reals^n \to \nonnegativeReals$ satisfy~\cref{assump:transportation:cost:Hadamard}(i). Then, 
\begin{equation}\label{eq:prop:W:Hadamard:inequality}
    \transportCost{c}{\P_1 \odot \Q}{\P_2 \odot \Q}\leqslant\transportCost{c}{\P_1 }{\P_2}\, \mathbb{E}_{\Q}[c(x)].
\end{equation}
\end{proposition}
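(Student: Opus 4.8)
The plan is to mirror the proof of \cref{prop:W:convolution}, replacing the exact cancellation afforded by translation-invariance under convolution with the sub-multiplicative bound in \cref{assump:transportation:cost:Hadamard}(i) that is available for the Hadamard product. The starting observation is that $\transportCost{c}{\P_1 \odot \Q}{\P_2 \odot \Q}$ is an infimum over all couplings in $\setPlans{\P_1\odot\Q}{\P_2 \odot \Q}$, so any subfamily of admissible couplings furnishes an upper bound. \cref{lemma:coupling:Hadamard} hands us exactly such a subfamily: for every $\pi \in \setPlans{\P_1}{\P_2}$, the measure $\gamma_\pi \coloneqq \pi \odot \pushforward{(\Id\times\Id)}\Q$ belongs to $\setPlans{\P_1\odot\Q}{\P_2 \odot \Q}$, and it suffices to estimate its transport cost.

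First I would unpack $\gamma_\pi$ explicitly. By \cref{def:Hadamard}, $\gamma_\pi$ is the law of the element-wise product of $(x_1,x_2)\sim\pi$ and $(y,y)\sim\pushforward{(\Id\times\Id)}\Q$, drawn independently; that is, $\gamma_\pi$ is the distribution of $(x_1\cdot y,\, x_2\cdot y)$ with $(x_1,x_2)\sim\pi$ and $y\sim\Q$ independent. A quick check confirms that its marginals are $\P_1\odot\Q$ and $\P_2\odot\Q$, consistent with \cref{lemma:coupling:Hadamard}. Evaluating the transport cost of $\gamma_\pi$ and invoking translation-invariance of $c$, I obtain $\int c(z_1-z_2)\,\d\gamma_\pi = \mathbb{E}\!\left[c(x_1\cdot y - x_2\cdot y)\right]$, the expectation being taken over the independent pair $\big((x_1,x_2),\,y\big)$.

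The crux is then a single application of \cref{assump:transportation:cost:Hadamard}(i): since $x_1\cdot y - x_2\cdot y = (x_1-x_2)\cdot y$, the pointwise bound $c(x_1\cdot y - x_2\cdot y)\leq c(x_1-x_2)\,c(y)$ holds. Taking expectations and using the independence of $(x_1,x_2)$ and $y$—which is built into the product structure underlying the Hadamard product of measures in \cref{def:Hadamard}—factorizes the right-hand side as $\mathbb{E}_\pi[c(x_1-x_2)]\cdot\mathbb{E}_{\Q}[c(y)]$. Hence $\transportCost{c}{\P_1 \odot \Q}{\P_2 \odot \Q}\leq \mathbb{E}_\pi[c(x_1-x_2)]\cdot\mathbb{E}_{\Q}[c(x)]$ for every $\pi\in\setPlans{\P_1}{\P_2}$. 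Because the factor $\mathbb{E}_{\Q}[c(x)]\geq 0$ does not depend on $\pi$, taking the infimum over $\pi$ pulls it outside and yields $\transportCost{c}{\P_1}{\P_2}\,\mathbb{E}_{\Q}[c(x)]$, which is the claimed inequality \eqref{eq:prop:W:Hadamard:inequality}.

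I expect the main obstacle to be careful bookkeeping rather than any deep difficulty: correctly identifying the explicit law of $\gamma_\pi$ on $\reals^n\times\reals^n$ and verifying that the element-wise product distributes as $(x_1-x_2)\cdot y$, so that the sub-multiplicative hypothesis applies verbatim. One should also confirm that the factorization step is legitimate, i.e., that $(x_1,x_2)$ and $y$ are genuinely independent under $\gamma_\pi$; this is guaranteed by the product-measure definition of the Hadamard product. I note that \cref{assump:transportation:cost:Hadamard}(ii), the $p$-th-root/triangle-inequality condition, is \emph{not} needed for this proposition—it will only be invoked later, in \cref{thm:Hadamard}, to combine two genuine ambiguity sets—so the estimate here rests solely on part~(i).
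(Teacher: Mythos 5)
Your proposal is correct and follows essentially the same route as the paper's proof: restrict the infimum to couplings of the form $\pi \odot \pushforward{(\Id\times\Id)}\Q$ via \cref{lemma:coupling:Hadamard}, rewrite the cost as an expectation over the independent pair $\big((x_1,x_2),y\big)$, apply \cref{assump:transportation:cost:Hadamard}(i) pointwise, and factorize using the product structure before taking the infimum over $\pi$. Your observation that part (ii) of the assumption is not needed here also matches the paper, which invokes it only in \cref{thm:Hadamard}.
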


\cref{prop:W:Hadamard} states that the \gls{acr:ot} discrepancy satisfies the analogous of~\cref{assump:transportation:cost:Hadamard}(i) in the probability space, namely the inequality~\eqref{eq:prop:W:Hadamard:inequality}.
Armed with~\cref{prop:W:Hadamard}, we can now study the Hadamard product of \gls{acr:ot} ambiguity sets.

\begin{theorem}[Hadamard product]
\label{thm:Hadamard}
Let $\P, \Q \in \probSpace{\reals^n}$, and let $c:\reals^n \to \nonnegativeReals$ satisfy~\cref{assump:transportation:cost:Hadamard}. Then,
\begin{equation}\label{eq:thm:hadamard}
\begin{aligned}
    \ball{\varepsilon_1}{c}{\P}&\odot \ball{\varepsilon_2}{c}{\Q}
    \\
    &\subset  \ball{(\varepsilon_1^{1/p}\varepsilon_2^{1/p} +\varepsilon_1^{1/p}  {\mathbb{E}_{\Q}[c(x)]}^{1/p} + \varepsilon_2^{1/p} {\mathbb{E}_{\P}[c(x)]}^{1/p})^p}{c}{\P \odot \Q}.
\end{aligned}
\end{equation}
\end{theorem}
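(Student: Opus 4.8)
The plan is to fix arbitrary $\P' \in \ball{\varepsilon_1}{c}{\P}$ and $\Q' \in \ball{\varepsilon_2}{c}{\Q}$, so that a generic element of the left-hand side of~\eqref{eq:thm:hadamard} is $\P' \odot \Q'$, and to bound $\transportCost{c}{\P \odot \Q}{\P' \odot \Q'}$ by the claimed radius. Since $c^{1/p}$ satisfies the triangle inequality by~\cref{assump:transportation:cost:Hadamard}(ii), the standard gluing argument shows that $\transportCost{c}{\cdot}{\cdot}^{1/p}$ obeys the triangle inequality on $\probSpace{\reals^n}$; I would work throughout with this $1/p$-th root, so that radii add rather than multiply. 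Inserting the intermediate distribution $\P \odot \Q'$ gives
\[
\transportCost{c}{\P \odot \Q}{\P' \odot \Q'}^{1/p} \le \transportCost{c}{\P \odot \Q}{\P \odot \Q'}^{1/p} + \transportCost{c}{\P \odot \Q'}{\P' \odot \Q'}^{1/p}.
\]

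To control the two terms I would invoke \cref{prop:W:Hadamard}, exploiting that $\odot$ is commutative so either factor may play the role of the fixed distribution in that proposition. The first term has $\P$ fixed: writing $\P \odot \Q = \Q \odot \P$ we get $\transportCost{c}{\Q \odot \P}{\Q' \odot \P} \le \transportCost{c}{\Q}{\Q'}\,\mathbb{E}_{\P}[c(x)] \le \varepsilon_2\, \mathbb{E}_{\P}[c(x)]$, contributing $\varepsilon_2^{1/p}\mathbb{E}_{\P}[c(x)]^{1/p}$. The second term has $\Q'$ fixed: $\transportCost{c}{\P \odot \Q'}{\P' \odot \Q'} \le \transportCost{c}{\P}{\P'}\,\mathbb{E}_{\Q'}[c(x)] \le \varepsilon_1\, \mathbb{E}_{\Q'}[c(x)]$, contributing $\varepsilon_1^{1/p}\mathbb{E}_{\Q'}[c(x)]^{1/p}$.

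The main obstacle is that this second contribution features the expectation under the \emph{perturbed} $\Q'$, whereas the target radius only involves $\mathbb{E}_{\Q}[c(x)]$. To close the gap I would represent the expectation as an \gls{acr:ot} discrepancy to the origin: by translation invariance the unique coupling against a Dirac yields $\mathbb{E}_{\Q'}[c(x)] = \transportCost{c}{\Q'}{\diracDelta{0}}$ and $\mathbb{E}_{\Q}[c(x)] = \transportCost{c}{\Q}{\diracDelta{0}}$. A second application of the triangle inequality for $\transportCost{c}{\cdot}{\cdot}^{1/p}$ then gives
\[
\mathbb{E}_{\Q'}[c(x)]^{1/p} \le \transportCost{c}{\Q'}{\Q}^{1/p} + \transportCost{c}{\Q}{\diracDelta{0}}^{1/p} \le \varepsilon_2^{1/p} + \mathbb{E}_{\Q}[c(x)]^{1/p},
\]
where symmetry of $c$ (inherited from the norm-based costs covered by the assumption) identifies $\transportCost{c}{\Q'}{\Q}$ with $\transportCost{c}{\Q}{\Q'} \le \varepsilon_2$. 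Multiplying by $\varepsilon_1^{1/p}$ produces precisely the cross term $\varepsilon_1^{1/p}\varepsilon_2^{1/p}$ together with $\varepsilon_1^{1/p}\mathbb{E}_{\Q}[c(x)]^{1/p}$. Summing the three pieces yields $\transportCost{c}{\P \odot \Q}{\P' \odot \Q'}^{1/p} \le \varepsilon_1^{1/p}\varepsilon_2^{1/p} + \varepsilon_1^{1/p}\mathbb{E}_{\Q}[c(x)]^{1/p} + \varepsilon_2^{1/p}\mathbb{E}_{\P}[c(x)]^{1/p}$, which after raising to the $p$-th power is exactly the claimed inclusion~\eqref{eq:thm:hadamard}. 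The symmetric decomposition through $\P' \odot \Q$ works identically, with the perturbation instead appearing in $\mathbb{E}_{\P'}[c(x)]$ and being absorbed in the same way.
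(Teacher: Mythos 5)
Your proposal is correct and follows essentially the same route as the paper's proof: the same triangle-inequality decomposition of $\transportCost{c}{\cdot}{\cdot}^{1/p}$ through the intermediate distribution $\P \odot \Q'$, the same two applications of \cref{prop:W:Hadamard} (using commutativity of $\odot$), and the same trick of writing $\mathbb{E}_{\Q'}[c(x)]$ as $\transportCost{c}{\Q'}{\diracDelta{0}}$ and applying the triangle inequality once more to trade it for $\varepsilon_2^{1/p} + \mathbb{E}_{\Q}[c(x)]^{1/p}$. No substantive difference; your handling of the perturbed expectation is exactly the paper's argument.
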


\begin{remark}
In general, the upper bound~\labelcref{eq:thm:hadamard} cannot be improved to equality. As for the convolution, let $n=1$, $c(x_1-x_2)=|x_1-x_2|$, $\varepsilon_1=0.1$, and $\varepsilon_2=0$. Let $\P = \delta_{1}$ and $\Q=0.5\,\delta_{0.9}+0.5\,\delta_{1.1}$, and consider the \gls{acr:ot} ambiguity set $\ball{\varepsilon_1}{c}{\P \odot \Q}=\ball{0.1}{c}{0.5\,\delta_{0.9}+0.5\,\delta_{1.1}}$. The probability distribution $\diracDelta{1}$ belongs to $\ball{\varepsilon_1}{c}{\P \odot \Q}$. However, $\diracDelta{1}\notin \ball{\varepsilon_1}{c}{\P} \odot \Q$, since the Hadamard product of any distribution in $\ball{\varepsilon_1}{c}{\P}$ with $\Q$ is supported on at least two distinct points (with the only possible exception for $\diracDelta{0}$). Thus, $\ball{\varepsilon_1}{c}{\P}\odot \Q\subsetneq \ball{\varepsilon_1}{c}{\P \odot \Q}$.
\end{remark}

We conclude this subsection by specializing \cref{thm:Hadamard} to the case of a known probability distribution $\Q$.

\begin{corollary}
\label{cor:hadamard:known:noise}
Let $\P, \Q \in \probSpace{\reals^ n}$, and let $c:\reals^n \times \reals^n \to \nonnegativeReals$ satisfy~\cref{assump:transportation:cost:Hadamard}. If $\varepsilon_2=0$ and $c$ is positive definite, then $\ball{\varepsilon_1}{c}{\P}\odot \Q \subset \ball{\varepsilon_1  \mathbb{E}_{\Q}[c(x)]}{c}{\P\odot \Q}$.
\end{corollary}


\section{Applications}
\label{sec:applications}

In this section, we specialize the results presented in~\cref{thm:lin:trans,thm:nonlin:trans,thm:conv:trans,thm:Hadamard} to several applications in the context of (stochastic) dynamical systems and least squares estimation. 


\subsection{Stochastic Linear Control Systems}\label{sec:app:dynamical:linear}
Our first application concerns the propagation of distributional uncertainty through a stochastic \gls{acr:lti} control system.
We focus on three sources of uncertainty: stochastic initial condition, additive process noise, and multiplicative noise.
In all cases, we capture distributional uncertainty via \gls{acr:ot} ambiguity sets (using the translation-invariant and orthomonotone transportation cost $c(x,y)=\norm{x-y}_2^2$) and aim at quantifying the distributional uncertainty in the state $x_t\in\reals^n$ at time $t$. 

Specifically, our three settings of interest are as follows: 
\begin{enumerate}
    \item Deterministic \gls{acr:lti} system with uncertain initial condition, whose distribution belongs to an \gls{acr:ot} ambiguity set:
    \begin{align}
    \label{eq:app:linear:random:initial}
        x_{t+1} = Ax_t+Bu_t, \quad x_0 \sim \Q_0 \in \ball{\varepsilon}{\norm{\cdot}_2^2}{\P_0}.
    \end{align} 

    \item Stochastic \gls{acr:lti} system with deterministic initial condition, and with additive noise, where the distribution of the noise trajectory belongs to an \gls{acr:ot} ambiguity set:
    \begin{multline}
        \label{eq:app:linear:additive:noise}
        x_{t+1} = Ax_t+Bu_t+Dw_t, \quad \\\mathbf{w}_{[t-1]}=\begin{bmatrix} w_{t-1}^\top &\cdots & w_0^\top\end{bmatrix}^\top \sim \Q_t \in \ball{\varepsilon}{\norm{\cdot}_2^2}{\P_t}.
    \end{multline} 

    \item Stochastic \gls{acr:lti} system with deterministic initial condition, and with two sources of independent (element-wise) multiplicative noise whose distributions belong to two \gls{acr:ot} ambiguity sets: 
    \begin{multline}
    \label{eq:app:linear:multiplicative:noise}
        x_{t+1} = w^{(1)}\cdot Ax_t+ w^{(2)} \cdot Bu_t,
        \\
        w_1 \sim \Q^{(1)} \in \ball{\varepsilon_1}{\norm{\cdot}_2^2}{\P^{(1)}}, 
        w_2 \sim \Q^{(2)} \in \ball{\varepsilon_2}{\norm{\cdot}_2^2}{\P^{(2)}}.
    \end{multline}
\end{enumerate}

These formulations encompass the scenario when one only has access to $N$ i.i.d. samples of the uncertainty and constructs an \gls{acr:ot} ambiguity set around these.
For instance, in the case of unknown initial condition (case 1) above), suppose one has access to $N$ samples $\{\sample{x}{i}_0\}_{i=1}^N$ from $\Q_0$. In this case, statistical concentration inequalities can be used to construct an \gls{acr:ot} ambiguity set $\ball{\varepsilon}{c}{\widehat\Q_{0}}$ around the empirical distribution $\widehat\Q_{0} = \frac{1}{N}\sum_{i=1}^N \diracDelta{\sample{x}{i}_0}$ which contains the true distribution $\P$ with high probability; see~\cite[Section~4]{aolaritei2023capture} for details. 

We now use the theory of~\cref{sec:propagation,sec:additive:multiplicative} to capture the distributional uncertainty in $x_t$. To ease notation, let
\begin{align}
\label{eq:stoch:dyn:sys:0-t}
\begin{split}
    \mathbf{u}_{[t-1]} &\coloneqq
    \begin{bmatrix}u_{t-1}^\top & u_{t-2}^\top & \cdots & u_0^\top\end{bmatrix}^\top,
    \\
    \mathbf{B}_{t-1} &\coloneqq \begin{bmatrix}B & A B & \cdots & A^{t-1} B \end{bmatrix},
    \\ 
    \mathbf{D}_{t-1} &\coloneqq \begin{bmatrix} D & A D & \cdots & A^{t-1} D \end{bmatrix}.
\end{split}
\end{align}


\begin{proposition}[Distributional uncertainty in \gls{acr:lti} systems]\label{prop:applications:propagation:linear}
\,
\begin{enumerate}
    \item Consider the system~\eqref{eq:app:linear:random:initial}. Then, 
    \begin{equation*}
        x_t\sim \Q_t\in
        \ball{\varepsilon}{\norm{\cdot}_2^2\circ \pinv{(A^t)}}{\delta_{\mathbf{B}_{t-1}\mathbf{u}_{[t-1]}} \ast(\pushforward{A^t}\P_0)}.
    \end{equation*}

    \item Consider the system~\eqref{eq:app:linear:additive:noise}. Then,
    \begin{equation*}
        \hspace{-0.5cm}
        x_t\sim \Q_t\in
        \ball{\varepsilon}{\|\cdot\|_2^2\circ \pinv{\mathbf{D}_{t-1}}}{\delta_{A^tx_0 + \mathbf{B}_{t-1}\mathbf{u}_{[t-1]}} \ast (\pushforward{\mathbf{D}_{t-1}}\P_t)}.
    \end{equation*}

    \item Consider the system~\eqref{eq:app:linear:multiplicative:noise}. Then, 
    \begin{equation*}
        x_t\sim \Q_t\in
        \ball{\rho_t}{\|\cdot\|_2^2}{\P_t}
    \end{equation*}
    with $\P_t\in\Pp{}{\reals^n}$ and $\rho_t\geq 0$ defined recursively by $\P_0=\diracDelta{x_0}$, $\rho_t=0$, and
    \begin{align*}
        \P_t
        &=
        \left(\P^{(1)}\odot \pushforward{A}\P_{t-1}\right)\ast \left(\P^{(2)}\odot \diracDelta{Bu_{t-1}}\right)
        \\
        \rho_t
        &=
        \begin{aligned}[t]
        \big(\sqrt{\varepsilon_1 \rho_{t-1}}&\sigma_\mathrm{\max}(A)+\sqrt{\rho_{t-1}M_{\P^{(1)}}}\sigma_\mathrm{\max}(A) \\
        &+\sqrt{\varepsilon_{1}M_{\pushforward{A}\P_{t-1}}}+\sqrt{\varepsilon_2}\norm{Bu_{t-1}}_2\big)^2,
        \end{aligned}
    \end{align*}
    where $\sigma_\mathrm{\max}(A)$ is the maximum singular value of $A$ and $M_\P\coloneqq\int_{\reals^n}\norm{x}^2\d\P(x)$ is the second moment of $\P$.
\end{enumerate}
\end{proposition}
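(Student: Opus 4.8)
The plan is to unroll each state recursion into a closed form and then apply the propagation results of the previous sections. For parts 1) and 2) the closed-form solution is an \emph{affine} function of a single random object, so a single application of \cref{thm:lin:trans} followed by a translation suffices; part 3) involves two multiplicative noises through a time-coupled nonlinear recursion, so I would argue by induction on $t$, invoking at each step a linear-map bound, the Hadamard bound \cref{thm:Hadamard}, and the convolution bound \cref{thm:conv:trans}.

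For part 1), unrolling \eqref{eq:app:linear:random:initial} gives $x_t = A^t x_0 + \mathbf{B}_{t-1}\mathbf{u}_{[t-1]}$, i.e.\ $x_t=f(x_0)$ for the affine map $f\colon x\mapsto A^t x+\mathbf{B}_{t-1}\mathbf{u}_{[t-1]}$. Viewing $f$ as the linear map $A^t$ followed by a translation, I would first apply \cref{thm:lin:trans} to obtain $\pushforward{(A^t)}\ball{\varepsilon}{\norm{\cdot}_2^2}{\P_0}\subset\ball{\varepsilon}{\norm{\cdot}_2^2\circ\pinv{(A^t)}}{\pushforward{(A^t)}\P_0}$, and then note that translation by a constant is convolution with a Dirac, which leaves the ambiguity set unchanged by \cref{cor:lin:trans:specialcase}(1), since the induced cost $\norm{\cdot}_2^2\circ\pinv{(A^t)}$ is still translation-invariant. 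Part 2) is identical after unrolling \eqref{eq:app:linear:additive:noise} to $x_t = A^t x_0 + \mathbf{B}_{t-1}\mathbf{u}_{[t-1]} + \mathbf{D}_{t-1}\mathbf{w}_{[t-1]}$: the random part is the linear image $\mathbf{D}_{t-1}\mathbf{w}_{[t-1]}$, so \cref{thm:lin:trans} applied to $\mathbf{D}_{t-1}$ produces the cost $\norm{\cdot}_2^2\circ\pinv{\mathbf{D}_{t-1}}$, while the deterministic offset $A^t x_0 + \mathbf{B}_{t-1}\mathbf{u}_{[t-1]}$ is absorbed into the Dirac center.

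For part 3), I would prove by induction that $x_t\sim\Q_t\in\ball{\rho_t}{\norm{\cdot}_2^2}{\P_t}$, with base case $\Q_0=\delta_{x_0}=\P_0$ and $\rho_0=0$. For the inductive step, assuming $\Q_{t-1}\in\ball{\rho_{t-1}}{\norm{\cdot}_2^2}{\P_{t-1}}$, I would push the recursion $x_t=w^{(1)}\cdot Ax_{t-1}+w^{(2)}\cdot Bu_{t-1}$ through four operations. (a) Linear map $A$: combining the identity \eqref{eq:W_c:arbitrary:f} of \cref{prop:Wc:pushforward} with $\norm{Ax}_2\le\sigma_{\max}(A)\norm{x}_2$ gives $\pushforward{A}\ball{\rho_{t-1}}{\norm{\cdot}_2^2}{\P_{t-1}}\subset\ball{\sigma_{\max}(A)^2\rho_{t-1}}{\norm{\cdot}_2^2}{\pushforward{A}\P_{t-1}}$. (b) Hadamard product with $w^{(1)}$: apply \cref{thm:Hadamard} with $p=2$ (valid since $\norm{\cdot}_2$ obeys the triangle inequality) to $\ball{\varepsilon_1}{\norm{\cdot}_2^2}{\P^{(1)}}$ and the set from (a). (c) The term $w^{(2)}\cdot Bu_{t-1}$ is a Hadamard product with the Dirac $\delta_{Bu_{t-1}}$, handled by \cref{cor:hadamard:known:noise} and yielding radius $\varepsilon_2\norm{Bu_{t-1}}_2^2$. (d) The sum of the two pieces is a convolution, to which I apply \cref{thm:conv:trans}. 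Expanding all radius formulas with $p=2$ and using $\mathbb{E}_{\P}[\norm{x}_2^2]=M_{\P}$, the three Hadamard terms from (b) together with the convolution term from (d) assemble exactly into the stated recursion for $\rho_t$, and the centers compose into $\P_t=(\P^{(1)}\odot\pushforward{A}\P_{t-1})\ast(\P^{(2)}\odot\delta_{Bu_{t-1}})$.

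The main obstacle is the bookkeeping in part 3): the four radius-inflation formulas must be chained in the right order, and one must check that the three Hadamard terms and the single convolution term reproduce $\rho_t$ verbatim. The one ingredient that is not a direct citation is the linear-map step (a): rather than changing the cost to $\norm{\cdot}_2^2\circ\pinv{A}$ as in \cref{thm:lin:trans}, I keep the base cost fixed at $\norm{\cdot}_2^2$ and pay for $A$ through a singular-value factor on the radius. This is precisely what makes the Hadamard and convolution bounds—both stated for a \emph{fixed} transportation cost—applicable at every stage of the induction.
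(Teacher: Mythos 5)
Your proposal is correct and follows essentially the same route as the paper: parts 1) and 2) are handled exactly as in the paper (unroll the dynamics, apply \cref{thm:lin:trans} to the linear part, absorb the deterministic offset via translation invariance), and part 3) uses the same induction with the same chaining of a linear-map bound, \cref{thm:Hadamard}, \cref{cor:hadamard:known:noise}, and \cref{thm:conv:trans}, reproducing the stated $\rho_t$ and $\P_t$ verbatim.

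The one sub-step where you deviate is the linear-map stage (a) of the induction. The paper first applies \cref{thm:lin:trans} to obtain the cost $\norm{\cdot}_2^2\circ\pinv{A}$ and then inflates the radius back to the base cost via \cref{lemma:ambiguitysetmonotone}, using the bound $\norm{\cdot}_2^2\circ\pinv{A}\geq \sigma_{\min}(\pinv{A})^2\norm{\cdot}_2^2 = \sigma_{\max}(A)^{-2}\norm{\cdot}_2^2$. You instead combine the exact identity \eqref{eq:W_c:arbitrary:f} with the pointwise bound $\norm{A(x_1-x_2)}_2^2\leq\sigma_{\max}(A)^2\norm{x_1-x_2}_2^2$, arriving at the same intermediate inclusion $\pushforward{A}\ball{\rho_{t-1}}{\norm{\cdot}_2^2}{\P_{t-1}}\subset\ball{\sigma_{\max}(A)^2\rho_{t-1}}{\norm{\cdot}_2^2}{\pushforward{A}\P_{t-1}}$. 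Your variant is in fact slightly more robust: the identity $\sigma_{\min}(\pinv{A})=\sigma_{\max}(A)^{-1}$ used by the paper is delicate when $A$ is singular (then $\sigma_{\min}(\pinv{A})=0$, and the inequality $\norm{\pinv{A}z}_2\geq\sigma_{\max}(A)^{-1}\norm{z}_2$ fails off $\range(A)$), whereas your operator-norm bound holds for arbitrary $A$ and keeps every ambiguity set expressed in the fixed base cost $\norm{\cdot}_2^2$, which is exactly what the convolution and Hadamard theorems require.
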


\begin{remark}
The propagation in 1) is exact if $A$ is full row-rank and the propagation in 2) is exact if $\mathbf{D}_{t-1}$ is full row-rank. In all other cases,~\cref{prop:applications:propagation:linear} provides us with a (non-trivial) upper bound for the distributional uncertainty in $x_t$. 
\end{remark}

\begin{remark}
The results in \cref{thm:lin:trans,thm:conv:trans,thm:Hadamard} can also be employed in scenarios where multiple (simultaneous) sources of uncertainty are present. For example, for stochastic~\gls{acr:lti} systems with both uncertain initial condition and additive noise, i.e., 
\begin{align*}
    x_{t+1} &= Ax_t+Bu_t+Dw_t, \quad x_0 \sim \Q_0 \in \ball{\varepsilon_1}{\norm{\cdot}_2^2}{\P_0}, \\ \mathbf{w}_{[t-1]} &=\begin{bmatrix} w_{t-1}^\top &\cdots & w_0^\top\end{bmatrix}^\top \sim \Q_t \in \ball{\varepsilon_2}{\norm{\cdot}_2^2}{\P_t}
\end{align*}
it can be easily shown that the OT ambiguity set
\begin{equation*}
    \ball{\tilde{\varepsilon}}{\|\cdot\|_2^2}{\delta_{\mathbf{B}_{t-1}\mathbf{u}_{[t-1]}} \ast(\pushforward{A^t}\P_0)\ast (\pushforward{\mathbf{D}_{t-1}}\P_t)},
\end{equation*}
with $\tilde{\varepsilon}\coloneqq(\sqrt{\varepsilon_1}/\sigma_{\text{max}}(A^t) + \sqrt{\varepsilon_2}/\sigma_{\text{max}}(\mathbf{D}_{t-1}))^2$, captures the distributional uncertainty in the state $x_t$. Notice that this ambiguity set is not the result of a simple superposition of the two ambiguity sets in 1)-2) from \cref{prop:applications:propagation:linear}.
\end{remark}

\subsection{Distributionally Robust Trajectory Planning}
\label{sec:application:trajectory:planning}

\begin{figure}[t]
    \centering
    \includegraphics[width=0.9\linewidth]{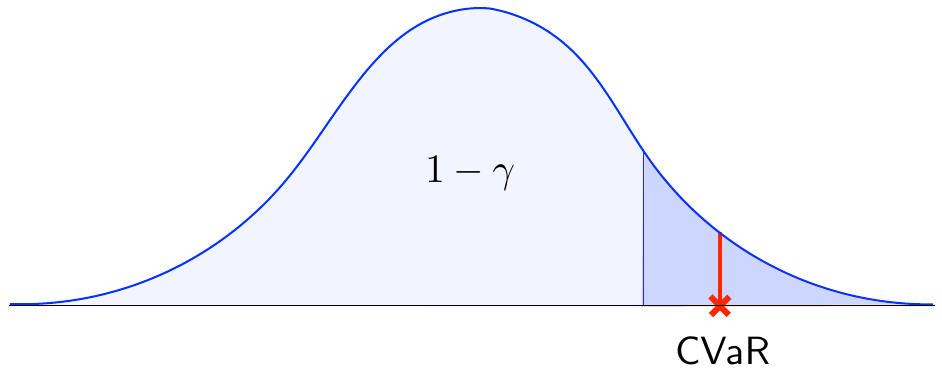}
    \caption{CVaR is the expected value of the right $\gamma$-tail (dark blue).}
    \label{fig:cvar}
\end{figure}

We now instantiate our results for distributionally robust trajectory planning. Consider the stochastic \gls{acr:lti} system
\begin{equation}\label{eq:app:trajectory:lti}
    x_{t+1}=Ax_t+Bu_t+Dw_t,
\end{equation}
where $x_t\in\reals^n, u_t\in\reals^m$, $w_t\in\reals^r$, and all system matrices are of appropriate dimensions. We suppose that the noise trajectory $\mathbf{w}_{[t-1]}=\begin{bmatrix} w_{t-1}^\top & \cdots & w_0^\top\end{bmatrix}^\top$ is distributed according to an \emph{unknown} probability distribution $\P_t\in\Pp{}{\reals^{rt}}$. In particular, we do not assume that $w_i$ and $w_j$ are uncorrelated. 

For a horizon $T\in\naturals$, we aim at steering \eqref{eq:app:trajectory:lti} from a given initial condition $x_0\in\reals^n$ to a target set $\mathbb{X}\subset\reals^n$ defined as
\begin{equation}\label{eq:app:polyhedron}
    \mathbb X \coloneqq 
    \left\{x \in \reals^n:\, \max_{j \in [J]} a_j^\top x + b_j \leq 0,\; J \in \mathbb N\right\},
\end{equation}
where $a_j\in\reals^n$ and $b_j\in\reals$. In what follows, we assume that $\mathbf{D}_{T-1}$ defined in \eqref{eq:stoch:dyn:sys:0-t} is full row-rank.
To deal with distributions with unbounded support and reduce conservatism,
we say an input trajectory $\mathbf{u}_{[T-1]}$ is \emph{feasible} if the probability distribution of the terminal state $x_T$, given by 
\begin{equation}\label{eq:app:dr:steering:exact}
    \Q_T(\mathbf{u}_{[T-1]})=\delta_{A^T x_0 + \mathbf{B}_{T-1} \mathbf{u}_{[T-1]}}\ast(\pushforward{\mathbf{D}_{T-1}}\P_T),
\end{equation}
satisfies the~\gls{acr:cvar} constraint:
\begin{equation}\label{eq:app:CVaR:constraint}
    \cvar_{1-\gamma}^{\Q_T(\mathbf{u}_{[T-1]})}
    \left(\max_{j \in [J]} a_j^\top x_T + b_j\right) \leq 0.
\end{equation} 
Due to space constraints, we refer to \cite[Equation~(1)]{aolaritei2023capture} for the specific mathematical formulation of ~\gls{acr:cvar} constraints, and simply recall here that: (i) $\cvar_{1-\gamma}^{\Q_T(\mathbf{u}_{[T-1]})}$ represents the expectation of the right $\gamma$-tail of the distribution $\Q_T(\mathbf{u}_{[T-1]})$ (see~\cref{fig:cvar}), and (ii) \eqref{eq:app:CVaR:constraint} is a convex constraint which implies that the terminal state $x_T$ belongs to the target set $\mathbb{X}$ with probability of at least $1-\gamma$.

If the true distribution of the noise trajectory $(w_0,\ldots,w_{T-1})$ were known, we could directly leverage~\eqref{eq:app:dr:steering:exact} and compute the cheapest control input via
\begin{equation}\label{eq:dr trajectory}
\begin{array}{cl}
        \min & \displaystyle\norm{\mathbf{u}_{[T-1]}}_2^2 \\
        \st
        & \DS \cvar_{1-\gamma}^{\Q_T(\mathbf{u}_{[T-1]})}\left(\max_{j \in [J]} a_j^\top x_T + b_j\right) \leq 0.    
\end{array}
\end{equation}
We consider instead the case where the distribution is unknown and we only have access to $N$ i.i.d. noise sample trajectories 
\begin{align*}
    \widehat{\mathbf{w}}_{[T-1]}^{(i)}
    \coloneqq 
    \begin{bmatrix}(\widehat{w}_{T-1}^{(i)})^{\top} & \cdots & (\widehat{w}_0^{(i)})^{\top} \end{bmatrix}^\top,
\end{align*}
for $i \in \{1,\ldots,N\}$. We capture the distributional uncertainty in the noise trajectories via an~\gls{acr:ot} ambiguity set $\ball{\varepsilon}{\norm{\cdot}_2^2}{\widehat\P_{T}}$ centered at the empirical distribution $\widehat\P_{T}=\frac{1}{N}\sum_{i=1}^N\delta_{\hat{\mathbf{w}}_{[T-1]}^{(i)}}$.
By~\cref{prop:applications:propagation:linear}, the distributional uncertainty in the terminal state $x_T$ is therefore captured by the \gls{acr:ot} ambiguity set
\begin{equation*}
\begin{aligned}
    \mathbb{S}(\mathbf{u}_{[T-1]})
    \coloneqq
    \ball{\varepsilon}{\|\cdot\|_2^2\circ \pinv{\mathbf{D}_{T-1}}}{\delta_{A^T x_0 + \mathbf{B}_{T-1} \mathbf{u}_{[T-1]}}\ast(\pushforward{\mathbf{D}_{T-1}}{\widehat\P_{T}})}.
\end{aligned}
\end{equation*}
In particular, the center of the \gls*{acr:ot} ambiguity set $\mathbb{S}(\mathbf{u}_{[T-1]})$ is supported on the $N$ controlled state samples
\begin{align*}
    \widehat{x}_T^{(i)}(\mathbf{u}_{[T-1]})
    = A^Tx_0 + \mathbf{B}_{T-1} \mathbf{u}_{[T-1]} + \mathbf{D}_{T-1} \widehat{\mathbf{w}}_{[T-1]}^{(i)}.
\end{align*}
Accordingly, the optimal input must satisfy the~\gls{acr:cvar} constraint~\eqref{eq:app:CVaR:constraint} for all probability distribution belonging to the \gls{acr:ot} ambiguity set $\mathbb{S}(\mathbf{u}_{[t-1]})$:
\begin{equation}\label{eq:dr trajectory:optimization}
\begin{array}{cl}
    \min & \displaystyle\norm{\mathbf{u}_{[T-1]}}_2^2 \\
    \st & \DS \sup_{\Q_T \in \mathbb{S}(\mathbf{u}_{[T-1]})} \cvar_{1-\gamma}^{\Q_T}\left(\max_{j \in [J]} a_j^\top x_T + b_j\right) \leq 0.
\end{array}
\end{equation}
The semi-infinite problem \eqref{eq:dr trajectory:optimization} can be reformulated as a finite-dimensional convex program using \cite[Proposition~2.12 and Theorem~2.16]{shafieezadeh2023new}. Due to space reasons, its proof is omitted.
\begin{proposition}[Distributionally robust trajectory planning]
\label{prop:app:distributionally robust trajectory:reformulation}
The distributionally robust trajectory planning~\eqref{eq:dr trajectory:optimization} admits the finite-dimensional convex reformulation
\begin{align*}
    \begin{array}{cll}
        \min & \DS \|\mathbf{u}_{[T-1]}\|_2^2 \\
        [1.5ex]\st 
        & \DS 
        \tau \in \reals, \lambda \in \reals_+, s_i \in \reals, &\forall i\in[N]
        \\
        & \DS \lambda \varepsilon N + \sum_{i=1}^{N} s_{i} \leq 0
        \\
        & \DS \frac{1}{4\lambda\gamma} \tilde{\alpha}_j + a_j^\top \widehat{x}_T^{(i)}(\mathbf{u}_{[T-1]}) + \tilde{\beta}_j \leq \gamma s_i
        &\forall i \in [N], j\in [J]
        \\ [1.5ex]
        &  \DS
         \tau\leq s_i  &\forall i\in[N].
    \end{array}
\end{align*}
with $\tilde{\alpha}_j := a_j^\top \left((\pinv{\mathbf{D}_{T-1}})^\top \pinv{\mathbf{D}_{T-1}} \right)^{-1} a_j$, and $\tilde{\beta}_j := b_j + \gamma \tau - \tau$.
\end{proposition}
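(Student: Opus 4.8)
The plan is to strip the semi-infinite constraint in~\eqref{eq:dr trajectory:optimization} down to a finite system by removing two nested worst-case layers: first the $\cvar$ risk measure, then the worst-case expectation over the \gls{acr:ot} ball $\mathbb{S}(\mathbf{u}_{[T-1]})$. First I would rewrite the risk measure through its Rockafellar--Uryasev variational form
\[
    \cvar_{1-\gamma}^{\Q_T}\Big(\max_{j\in[J]} a_j^\top x_T + b_j\Big)
    = \inf_{\tau\in\reals}\Big\{\tau + \tfrac{1}{\gamma}\,\mathbb{E}_{\Q_T}\big[(\textstyle\max_{j\in[J]}(a_j^\top x_T+b_j)-\tau)_+\big]\Big\},
\]
and then interchange the outer supremum over $\Q_T\in\mathbb{S}(\mathbf{u}_{[T-1]})$ with the infimum over $\tau$. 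This minimax exchange is precisely the content of~\cite[Proposition~2.12]{shafieezadeh2023new}: since the integrand is convex in $\tau$ and affine in $\Q_T$, and the \gls{acr:ot} ball is convex and weakly compact, the worst-case $\cvar$ equals the infimum over $\tau$ of the worst-case expectation of the convex, piecewise-affine loss $x\mapsto\max\{0,\max_j(a_j^\top x + b_j - \tau)\}$, with $\tau$ promoted to a decision variable.

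By the propagation result in~\cref{prop:applications:propagation:linear} (exact here, since $\mathbf{D}_{T-1}$ is full row-rank), $\mathbb{S}(\mathbf{u}_{[T-1]})$ is an \gls{acr:ot} ball centered at the empirical distribution on the controlled samples $\widehat{x}_T^{(i)}(\mathbf{u}_{[T-1]})$, with the positive-definite quadratic transportation cost $c=\norm{\cdot}_2^2\circ\pinv{\mathbf{D}_{T-1}}$. The second step is to apply Wasserstein \gls{acr:dro} strong duality~\cite[Theorem~2.16]{shafieezadeh2023new} to the inner worst-case expectation. Because the loss is a finite maximum of affine pieces and $c$ grows quadratically, the map $x\mapsto\ell_\tau(x)-\lambda c(x,\widehat{x}_T^{(i)})$ is coercive for every $\lambda>0$, so the theorem replaces the supremum over $\Q_T$ by an infimum over a single multiplier $\lambda\ge 0$, namely $\lambda\varepsilon+\frac{1}{N}\sum_i\sup_x[\ell_\tau(x)-\lambda c(x,\widehat{x}_T^{(i)})]$.

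The third step is to evaluate each inner supremum in closed form. The zero piece contributes $0$ (attained at $x=\widehat{x}_T^{(i)}$), and for the $j$-th affine piece, writing $M\coloneqq(\pinv{\mathbf{D}_{T-1}})^\top\pinv{\mathbf{D}_{T-1}}$, completing the square gives
\[
    \sup_x\big(a_j^\top x + b_j - \tau - \lambda\,(x-\widehat{x}_T^{(i)})^\top M (x-\widehat{x}_T^{(i)})\big)
    = \tfrac{1}{4\lambda}\,a_j^\top M^{-1} a_j + a_j^\top\widehat{x}_T^{(i)} + b_j - \tau.
\]
Crucially, $M$ is invertible exactly because $\mathbf{D}_{T-1}$ is full row-rank, so $\pinv{\mathbf{D}_{T-1}}$ has full column rank; this is where the standing assumption enters, and it yields the coefficient $\tilde{\alpha}_j=a_j^\top((\pinv{\mathbf{D}_{T-1}})^\top\pinv{\mathbf{D}_{T-1}})^{-1}a_j$. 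I would then introduce epigraph variables $s_i$ to linearize the maximum over the $J+1$ pieces, substitute the affine expression $\widehat{x}_T^{(i)}(\mathbf{u}_{[T-1]})=A^T x_0+\mathbf{B}_{T-1}\mathbf{u}_{[T-1]}+\mathbf{D}_{T-1}\widehat{\mathbf{w}}_{[T-1]}^{(i)}$, and absorb the $\tau$-dependent offsets together with the factor $1/\gamma$ via a benign rescaling of $\lambda$ and $s_i$; this is exactly what produces the $\tfrac{1}{4\lambda\gamma}\tilde{\alpha}_j$ term, the $\gamma s_i$ right-hand side, and $\tilde{\beta}_j=b_j+\gamma\tau-\tau$, recovering the stated constraint system.

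I expect the main obstacle to lie not in the algebra but in certifying the two exchange/duality steps rigorously: one must check that the hypotheses of~\cite[Proposition~2.12 and Theorem~2.16]{shafieezadeh2023new} hold for this instance---in particular that the piecewise-affine loss satisfies the growth condition relative to the quadratic cost so that strong duality holds with zero gap and dual attainment, and that the \gls{acr:ot} ball is weakly compact so the minimax interchange is licensed. Once these are in place, joint convexity of the final program in $(\mathbf{u}_{[T-1]},\tau,\lambda,s_i)$ follows immediately: $\widehat{x}_T^{(i)}(\mathbf{u}_{[T-1]})$ is affine in $\mathbf{u}_{[T-1]}$, the term $\tfrac{1}{4\lambda\gamma}\tilde{\alpha}_j$ is convex on $\lambda>0$ (as $\tilde{\alpha}_j\ge 0$), and all remaining terms are affine, so the reformulation is a genuine finite-dimensional convex program.
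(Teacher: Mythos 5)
Your proposal is correct and follows essentially the same route as the paper, which (omitting the details for space) reduces~\eqref{eq:dr trajectory:optimization} via the Rockafellar--Uryasev form of $\cvar$, a sup--inf interchange, and the \gls{acr:ot}-\gls{acr:dro} strong duality of \cite[Proposition~2.12 and Theorem~2.16]{shafieezadeh2023new}, exactly as you do. Your explicit completion-of-the-square evaluation of the inner supremum (valid since $(\pinv{\mathbf{D}_{T-1}})^\top\pinv{\mathbf{D}_{T-1}}$ is positive definite under the full row-rank assumption) and the subsequent epigraph/rescaling step reproduce the stated constraints, matching the paper's intended argument.
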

We evaluate our methodology on the two-dimensional linear system $A=\frac{1}{2}\begin{bsmallmatrix} 1 & -1 \\ 2 & 1\end{bsmallmatrix}$, $B=I$, and $D=0.1 I$, prestabilized with the LQR control gain (designed with $Q=R=I$) and $T=10$.
We suppose that the decision-maker has access to 5 noise sample trajectories. We choose the set $[1,2]\times [1,2]$ as the target (grey in~\cref{fig:trajectory:planning}) and select $\gamma=0.1$. We repeat our experiments for three values of $\varepsilon$ and compare with propagation of the \gls{acr:ot} ambiguity set via the Lipschitz constant and propagation of the center only.
As shown in \cref{fig:trajectory:planning}, the feedforward input resulting from $\varepsilon=0$ (red in~\cref{fig:trajectory:planning}) performs well on the 5 training sample trajectories, steering them to the boundary of the target set, but yields poor performance on unseen test samples. For larger $\varepsilon$ (blue and green in~\cref{fig:trajectory:planning}), instead, the system trajectories are successfully steered to the target set, even for unseen noise realizations, at the price of a moderate increase in cost (8.5\% larger cost for $\varepsilon=0.1$ and 14.9\% larger cost for $\varepsilon=0.3$, compared with $\varepsilon=0$). Finally, the propagation via the Lipschitz constant yields instead 29.3\% larger cost, whereas only propagating the center leads to infeasibility and is therefore not included in the plot. 

\begin{figure}[t]
    \begin{subfigure}[t]{0.585\columnwidth}
    \centering
    \includegraphics[height=6cm,trim={2.6cm 1cm 4cm 2.8cm},clip]{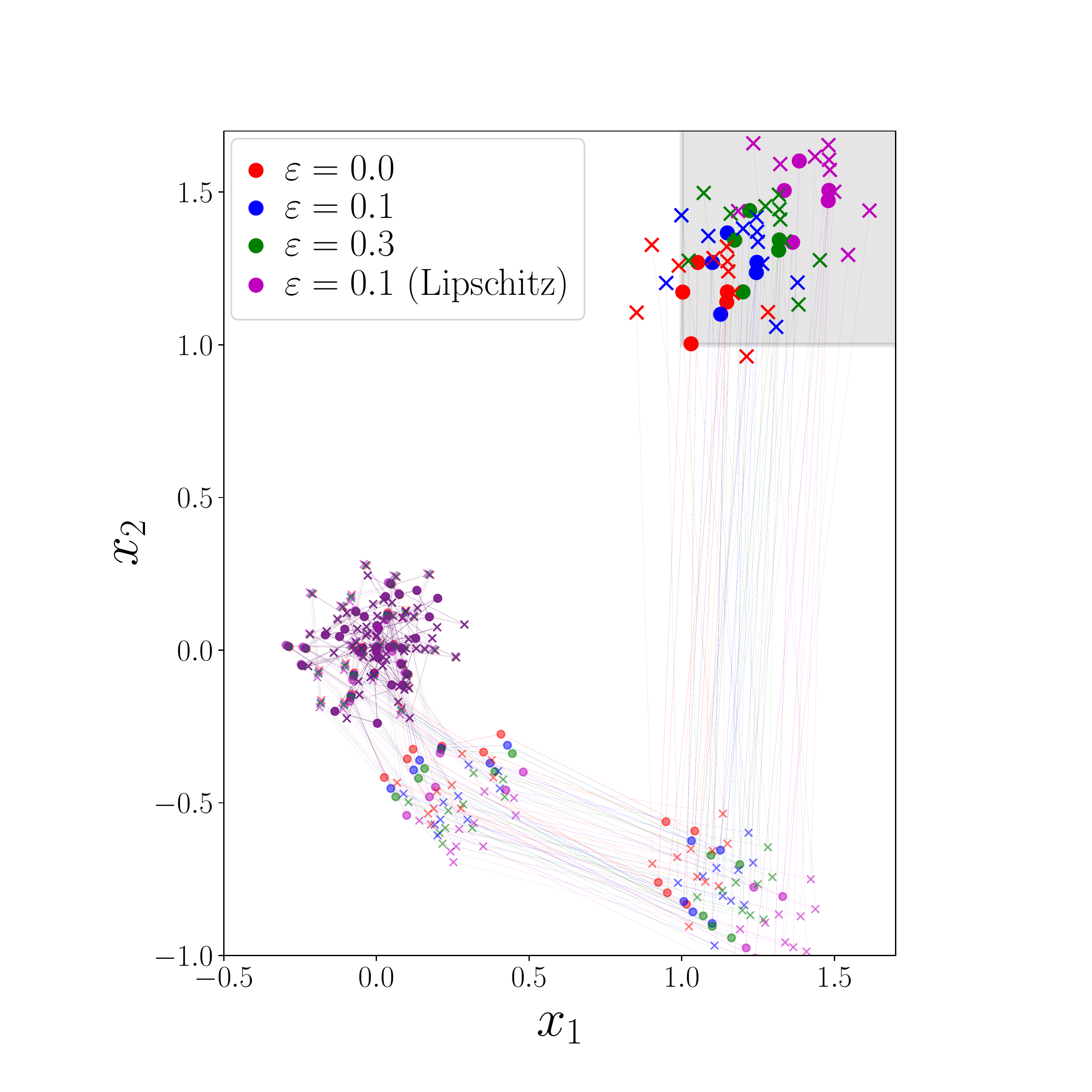}
    \caption{Training and testing trajectories for various radii; see plot on the right for zoom on the target set.}
    \end{subfigure}
    ~
    \begin{subfigure}[t]{0.385\columnwidth}
    \centering
    \includegraphics[height=6cm,trim={6.5cm 1cm 7.5cm 2.8cm},clip]{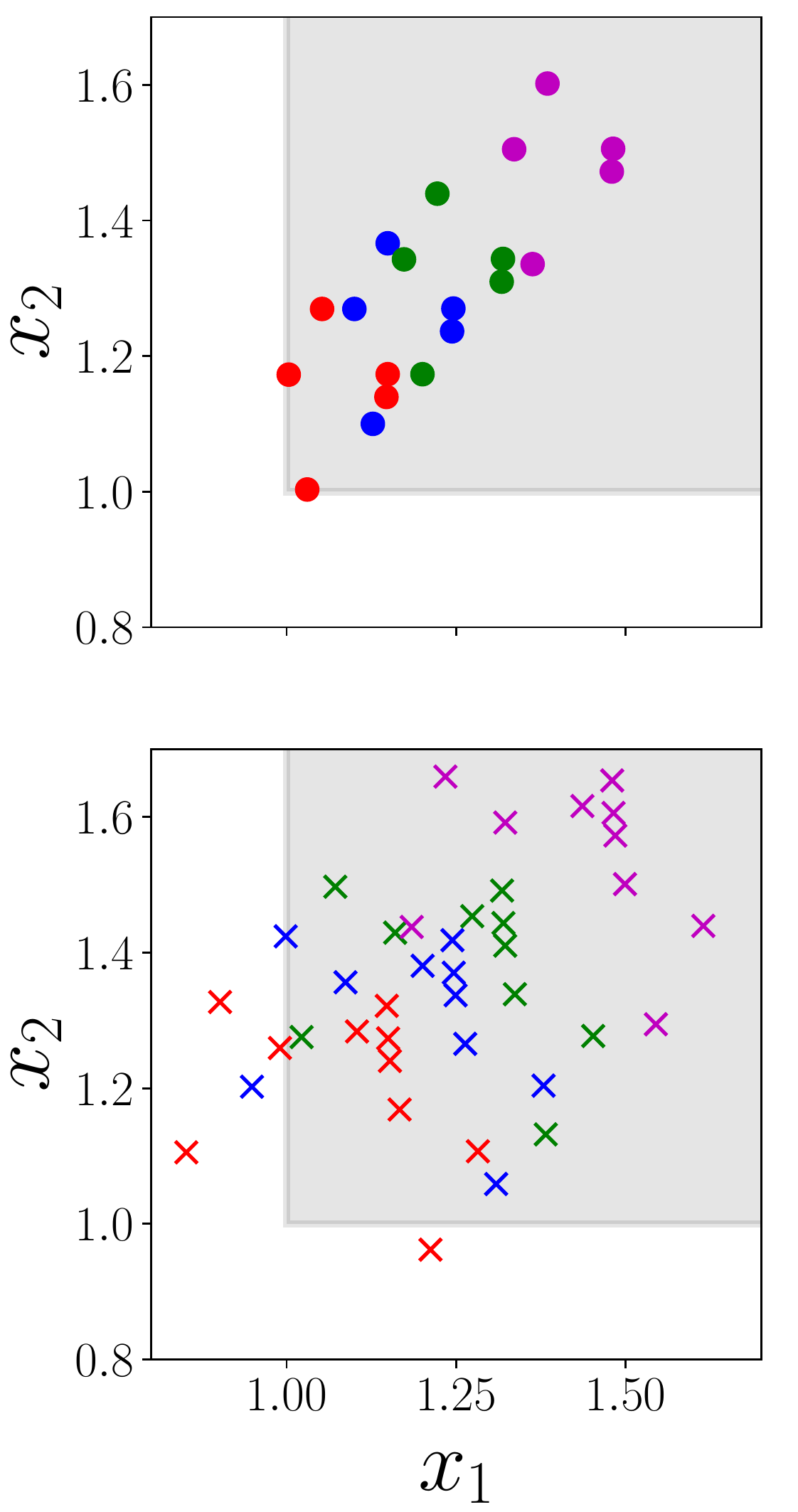}
    \caption{Training samples (top) and testing samples (bottom) for various radii.}
    \end{subfigure}
    \caption{Distributionally robust steering of a system from the origin to a target set for various radii $\varepsilon$. Filled circles (with solid lines) are training samples and crosses (with dotted lines) are testing samples.
    }
    \label{fig:trajectory:planning}
\end{figure}

\subsection{Consensus in Averaging Algorithms}
\label{sec:application:consensus}
Our third application concerns uncertainty quantification in the context of discrete-time averaging algorithms. More specifically, we study how distributional uncertainty in the initial condition of averaging algorithms propagates to the consensus state that (under suitable assumption) the algorithm reaches. 
Formally, consider the discrete-time averaging algorithm
\begin{equation}\label{eq:application:consensus:system}
    x_{t+1}=Ax_t,\qquad x_0\sim\Q_0\in\probSpace{\reals^n},
\end{equation}
where $A\in\reals^{n\times n}$ is the \emph{row-stochastic} adjacency matrix associated with a directed graph $G$ (which captures the interactions among the nodes/agents of the averaging). Under suitable assumptions on $G$, the iterate resulting from~\eqref{eq:application:consensus:system} is known to reach consensus (i.e., all entries of the vector $\lim_{t\to\infty}x_t$ are equal). 
Such algorithms are ubiquitous in many applications, including network systems, multi-agent systems, and opinion dynamics; see~\cite{bullo2020lectures} and references therein. 

In many real-world settings, the initial probability distribution $\Q_0$ is unknown. 
For instance, in opinion dynamics, the distribution of users' initial opinions is unknown but can be estimated via surveys (from which $\P_0$ is constructed); in sensor networks, $\P_0$ can be calibrated based on the sensors' specifications. 
Accordingly, we can use an \gls*{acr:ot} ambiguity set $\ball{\varepsilon}{\norm{\cdot}_2^2}{\P_0}$ to capture its distributional uncertainty. 

In this setting, we are interested in capturing the distributional uncertainty in the consensus state $\lim_{t\to\infty}x_t$.

\begin{proposition}[Consensus \gls{acr:ot} ambiguity set]
\label{prop:app:consensus}
Let $\Q_0 \in \ball{\varepsilon}{\norm{\cdot}_2^2}{\P_0}$, and let $A$ be row stochastic with a unique eigenvalue in $1$ and all the other eigenvalues strictly inside the unit circle. Moreover, let $\Q_t$ be the distribution of the state $x_t$, and $\Q_\infty$ be the distribution of $x_\infty := \lim_{t\to\infty}x_t$. Finally, let $w$ denote the left eigenvector of $A$ associated to the eigenvalue $1$ (normalized so that $\T \ones_n w =1$). Then,
\begin{align}\label{eq:app:consensus:one}
    \Q_t \to \Q_\infty \in \pushforward{(\ones_n)} \ball{\varepsilon\norm{w}_2^2}{\abs{\cdot}^2}{\pushforward{\T{w}} \P_0},
\end{align}
where $\rightarrow$ denotes the standard weak convergence. If additionally $A$ is doubly-stochastic, then $w=\frac{1}{n}\ones_n$ and
\begin{align}\label{eq:app:consensus:two}
    \Q_t \to \Q_\infty \in \pushforward{(\ones_n)} \ball{\varepsilon/n}{\abs{\cdot}^2}{\pushforward{\textstyle{\frac{1}{n}}\T{\ones_n}} \P_0}.
\end{align}
\end{proposition}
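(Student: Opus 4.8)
The plan is to reduce the asymptotic behaviour of the averaging iteration to a single linear pushforward through the rank-one limiting operator, and then invoke the surjective case of \cref{thm:lin:trans}. The starting point is the spectral structure of $A$: since $A$ is row-stochastic, $\ones_n$ is a right eigenvector for the eigenvalue $1$, and by hypothesis this eigenvalue is simple while every other eigenvalue lies strictly inside the unit circle. The associated spectral projector is therefore $\ones_n\T{w}/(\T{w}\ones_n)=\ones_n\T{w}$, where the last equality uses the normalization $\T{\ones_n}w=1$, and consequently $A^t\to\ones_n\T{w}$ as $t\to\infty$. In particular, for every fixed realization $x_0$ the limit $x_\infty=\lim_{t\to\infty}A^t x_0=\ones_n(\T{w}x_0)$ exists, so $\Q_\infty$ is well-defined as the law of $\ones_n\T{w}x_0$.

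Next I would exploit that the limiting map factors as $\ones_n\T{w}$, i.e.\ the scalar projection $\T{w}\colon\reals^n\to\reals$ followed by the embedding $\ones_n\colon\reals\to\reals^n$. Since pushforward commutes with composition, $\Q_\infty=\pushforward{(\ones_n\T{w})}\Q_0=\pushforward{\ones_n}\big(\pushforward{\T{w}}\Q_0\big)$. The matrix $\T{w}\in\reals^{1\times n}$ is full row-rank (as $w\neq0$), hence surjective, and $c=\norm{\cdot}_2^2$ satisfies \cref{assump:transportation:cost}. The surjective case of \cref{thm:lin:trans} then yields $\pushforward{\T{w}}\ball{\varepsilon}{\norm{\cdot}_2^2}{\P_0}=\ball{\varepsilon}{\norm{\cdot}_2^2\circ\pinv{(\T{w})}}{\pushforward{\T{w}}\P_0}$. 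With the standard inner product one computes $\pinv{(\T{w})}=w/\norm{w}_2^2$, so for a scalar increment $s$ the induced cost is $\norm{\pinv{(\T{w})}s}_2^2=s^2/\norm{w}_2^2$; rescaling the radius via the identity $\transportCost{\lambda c}{\cdot}{\cdot}=\lambda\,\transportCost{c}{\cdot}{\cdot}$ (equivalently \cref{cor:lin:trans:specialcase}(2)) turns this into $\ball{\varepsilon\norm{w}_2^2}{\abs{\cdot}^2}{\pushforward{\T{w}}\P_0}$. Because $\Q_0\in\ball{\varepsilon}{\norm{\cdot}_2^2}{\P_0}$, applying $\pushforward{\ones_n}$ to the inclusion $\pushforward{\T{w}}\Q_0\in\pushforward{\T{w}}\ball{\varepsilon}{\norm{\cdot}_2^2}{\P_0}$ gives exactly the claimed membership \eqref{eq:app:consensus:one}.

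For the weak convergence $\Q_t\to\Q_\infty$, I would fix a bounded continuous test function $\phi$ and write $\int\phi\,\d\Q_t=\int\phi(A^t x)\,\d\Q_0(x)$. Since $A^t x\to\ones_n\T{w}x$ pointwise and $\phi$ is bounded and continuous, dominated convergence gives $\int\phi\,\d\Q_t\to\int\phi\,\d\Q_\infty$, which is precisely weak convergence. Finally, the doubly-stochastic specialization \eqref{eq:app:consensus:two} is immediate: there $\ones_n$ is also a left eigenvector, so the normalized left eigenvector is $w=\tfrac{1}{n}\ones_n$, giving $\norm{w}_2^2=1/n$ and $\T{w}=\tfrac{1}{n}\T{\ones_n}$.

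I expect the cost transformation to be the delicate step: one must correctly compute the Moore--Penrose inverse of the rank-one row $\T{w}$ and track how the quadratic cost contracts under it, since an error here directly corrupts the radius $\varepsilon\norm{w}_2^2$. By contrast, the spectral limit $A^t\to\ones_n\T{w}$ and the dominated-convergence argument for weak convergence are standard; their only role is to make the factorization $\ones_n\circ\T{w}$ rigorous so that \cref{thm:lin:trans} can be applied to the surjective factor alone.
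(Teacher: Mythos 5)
Your proposal is correct and follows essentially the same route as the paper's proof: both factor the limiting map as $\ones_n\circ\T{w}$, apply the surjective case \eqref{eq:B_epsilon:surjective:A:1} of \cref{thm:lin:trans} to the full row-rank map $\T{w}$ with $\pinv{(\T{w})}=w/\norm{w}_2^2$, rescale the cost $\abs{\cdot}^2/\norm{w}_2^2$ into the radius $\varepsilon\norm{w}_2^2$, and establish weak convergence via the spectral limit $A^t\to\ones_n\T{w}$ together with dominated convergence. The only cosmetic difference is your citation of \cref{cor:lin:trans:specialcase}(2) for the radius rescaling (that corollary concerns scaling maps rather than scaled costs, though the identity $\transportCost{\lambda c}{\cdot}{\cdot}=\lambda\transportCost{c}{\cdot}{\cdot}$ you actually use is the same elementary fact the paper invokes implicitly).
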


In words, the distributional uncertainty is the same in all entries of $\lim_{t\to\infty} x_t$, and it belongs to the \gls*{acr:ot} ambiguity set $\ball{\varepsilon\norm{w}_2^2}{\abs{\cdot}^2}{\pushforward{\T{w}} \P_0}$, which therefore captures the distributional uncertainty in the consensus state. This way, for instance, we can quantify the worst-case probability that the consensus state resides outside of an interval $(a,b)$ of interest via
\begin{equation*}
    \sup_{\P \in \ball{\varepsilon\norm{w}^2_2}{\abs{\cdot}^2}{\pushforward{\T{w}} \P_0}} \left\{\mathrm{Pr}(x\notin (a,b))=\int_{\reals\setminus(a,b)} \d\P(x)\right\}.
\end{equation*}
As above, this optimization problem admits a finite-dimensional convex reformulation \cite{shafieezadeh2023new}.

\subsection{Least Squares Estimation}\label{sec:LS}

Our fourth application concerns statistical estimation.
Consider the linear inverse problem
\begin{align}
\label{eq:least:squares}
    y = A x_0 + z,
\end{align}
where the aim is to estimate the parameter $x_0 \in \reals^n$ from $m$ noisy measurements (with $m>n$) of the form $\entry{y}{i} = \T{a_i} x_0 + z_i$, with $z_i\in\reals$ random measurement noise (possibly correlated). If $A$ has full column-rank, a standard solution is the \gls{acr:ols} estimator
\begin{align}
\label{eq:least:squares:estimator}
    \hat{x}\coloneqq \argmin_{x \in \reals^n}\; \|y-Ax\|_2=\pinv{A} y = (\T A A)^{-1}\T A y.
\end{align}
The \gls{acr:ols} estimator corresponds to the maximum likelihood estimation when the measurement noise $\{z_i\}_{i=1}^m$ are i.i.d. and standard normal with equal variances. Moreover, by the Gauss–Markov theorem \cite{sullivan2015introduction}, if $\{z_i\}_{i=1}^m$ are i.i.d. and have zero mean and equal finite variance, then the \gls{acr:ols} estimator is the best linear unbiased estimator, i.e., it has the lowest sampling variance within the class of linear unbiased estimators. 

Here, we depart from these assumptions and study a more general case, where we only have access to the $N$ i.i.d. samples $\{\sample{z}{i}\}_{i=1}^N$, with $\sample{z}{i}\in\reals$, from an otherwise unknown measurement noise (joint) distribution $\P\in\Pp{}{\reals^m}$ (see remark below for the specialization to the i.i.d. case). In this scenario, we seek to understand how the distributional uncertainty in measurement noise, captured by an \gls{acr:ot} ambiguity set, propagates to the \gls{acr:ols} estimator $\hat{x}$ in~\eqref{eq:least:squares:estimator}. Especially when the \gls{acr:ols} estimator is subsequently used for decision-making (e.g., in estimation or identification problems), it is crucial to accurately quantify uncertainty~\cite{summers2021distributionally}.

More formally, we construct an \gls*{acr:ot} ambiguity set $\ball{\varepsilon}{\norm{\cdot}_2^2}{\widehat\P}$ centered at the empirical distribution $\widehat\P = \frac{1}{N}\sum_{i=1}^N\delta_{\sample{z}{i}}$, which is guaranteed to contain the true probability distribution $\P$ with high probability and study the resulting \gls*{acr:ot} ambiguity set around the estimation error $\hat{x} - x_0$.

\begin{remark}[i.i.d. case]
If the noise measurements $z_1,\ldots,z_m$ are i.i.d. according to an unknown noise distribution $\P\in\Pp{}{\reals}$, and we have access to $N$ samples from $\P$, then \cite[Theorem~2]{Fournier2015} can be used to construct an \gls{acr:ot} ambiguity set which contains $\P$ with high probability. 
Subsequently,~\cite[Lemma 3]{aolaritei2023capture} can be used to construct an \gls{acr:ot} ambiguity set for the probability distribution of $z$ (i.e., the product distribution $\P^{\otimes m}=\P\otimes\ldots\otimes\P$). 
\end{remark}

Our results readily allow us to quantify the distributional uncertainty in the~\gls{acr:ols} estimator.

\begin{proposition}[Distributional uncertainty in \gls{acr:ols} estimator]
\label{prop:leastsquares}
Consider the linear inverse problem \eqref{eq:least:squares}, with $y \in \reals^m$, $x_0 \in \reals^n$ (deterministic or random), and $z \sim \P$. Moreover, let the (known) matrix $A$ be full column-rank. Finally, 
let $\P$ belong to the ambiguity set $\ball{\varepsilon}{\norm{\cdot}_2^2}{\widehat\P}$ constructed around the empirical distribution $\widehat\P = \frac{1}{N} \sum_{i=1}^N \delta_{\sample{z}{i}}$, for some $\varepsilon>0$. Then, the \gls{acr:ols} estimator $\hat{x}$ from \eqref{eq:least:squares:estimator} satisfies
\begin{align}
\label{eq:prop:leastsquares}
    \hat{x} - x_0 \sim \Q \in \ball{\varepsilon}{\norm{\cdot}_2^2 \circ A}{\pushforward{\pinv{A}} \widehat\P}.
\end{align}
\end{proposition}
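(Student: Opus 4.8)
The plan is to reduce the claim to a direct instantiation of the surjective case of \cref{thm:lin:trans}. First I would unpack the estimator: substituting the measurement model $y = A x_0 + z$ into $\hat{x} = \pinv{A} y$ and using that $A$ is full column-rank, so that $\pinv{A} A = \eye_n$, yields
\[
    \hat{x} - x_0 = \pinv{A}(A x_0 + z) - x_0 = \pinv{A} z.
\]
Thus the estimation error is exactly the image of the noise under the linear map $\pinv{A}$, and its law is the pushforward $\Q = \pushforward{\pinv{A}} \P$. The whole problem therefore becomes: propagate the ambiguity set $\ball{\varepsilon}{\norm{\cdot}_2^2}{\widehat\P}$ through the linear map $\pinv{A}$.

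Next I would identify the structural properties of $B \coloneqq \pinv{A} \in \reals^{n \times m}$. Since $A$ has full column-rank $n$ with $m > n$, its pseudoinverse $B$ has full row-rank, i.e.\ it defines a \emph{surjective} linear map $\reals^m \to \reals^n$; this is precisely the regime in which \cref{thm:lin:trans} delivers an exact equality rather than a mere inclusion. The cost $\norm{\cdot}_2^2$ is translation-invariant and orthomonotone (the latter because $\norm{x_1 + x_2}_2^2 = \norm{x_1}_2^2 + \norm{x_2}_2^2 \geq \norm{x_1}_2^2$ whenever $x_1 \perp x_2$), so \cref{assump:transportation:cost} holds and the theorem applies. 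Invoking the surjective equality \eqref{eq:B_epsilon:surjective:A:1} with the map $B = \pinv{A}$ gives
\[
    \pushforward{\pinv{A}} \ball{\varepsilon}{\norm{\cdot}_2^2}{\widehat\P}
    =
    \ball{\varepsilon}{\norm{\cdot}_2^2 \circ \pinv{(\pinv{A})}}{\pushforward{\pinv{A}} \widehat\P}.
\]

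The only remaining algebraic step is to simplify the induced cost via the identity $\pinv{(\pinv{A})} = A$, which I would verify directly from $\pinv{A} = \inv{(\T{A} A)} \T{A}$ (working with $X = \eye_m$, $Y = \eye_n$): writing $B = \pinv{A}$ one computes $B \T{B} = \inv{(\T{A} A)}$, so that $\pinv{B} = \T{B} \inv{(B \T{B})} = A$. Substituting recovers the stated cost $\norm{\cdot}_2^2 \circ A$. Finally, since $\P \in \ball{\varepsilon}{\norm{\cdot}_2^2}{\widehat\P}$ by hypothesis, its pushforward $\Q = \pushforward{\pinv{A}} \P$ belongs to $\pushforward{\pinv{A}} \ball{\varepsilon}{\norm{\cdot}_2^2}{\widehat\P}$, which equals $\ball{\varepsilon}{\norm{\cdot}_2^2 \circ A}{\pushforward{\pinv{A}} \widehat\P}$ by the above. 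I expect no genuine obstacle: the argument is a clean application of the surjective linear propagation result, and the only mild subtlety worth flagging is that it is surjectivity of $\pinv{A}$ — not injectivity of $A$ — that unlocks the exact equality, together with the slightly counterintuitive bookkeeping that propagating through $\pinv{A}$ reintroduces $A$ in the induced transportation cost.
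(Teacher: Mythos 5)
Your proposal is correct and follows essentially the same route as the paper's own proof: reduce to $\hat{x}-x_0=\pinv{A}z$, observe that $\pinv{A}$ is full row-rank because $A$ is full column-rank, and apply the surjective equality \eqref{eq:B_epsilon:surjective:A:1} of \cref{thm:lin:trans}, with the induced cost simplifying to $\norm{\cdot}_2^2\circ A$ since $A$ is the (right, and indeed Moore--Penrose) inverse of $\pinv{A}$. Your explicit verification of $\pinv{(\pinv{A})}=A$ and of \cref{assump:transportation:cost} merely spells out steps the paper states in passing.
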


\begin{remark}[i.i.d. case, continued]
In the i.i.d. setting of the previous remark, we therefore conclude that $\hat{x} - x_0 \sim \Q \in \ball{m\varepsilon}{\norm{\cdot}_2^2 \circ A}{\pushforward{\pinv{A}} \widehat\P^{\otimes m}}$.
\end{remark}

When the measurement noise has zero mean and variance $\sigma^2_z$, the covariance matrix of the \gls{acr:ols} estimator is equal to $\inv{(\T{A}A)}\sigma_z^2$ \cite{sullivan2015introduction}.
However, except for basic cases (e.g., Gaussian measurement noise), the covariance does not entirely capture the distributional uncertainty in the \gls{acr:ols} estimator. On the contrary, \cref{prop:leastsquares} enables us to fully capture the distributional uncertainty in the estimation error $\hat{x}  - x_0$.

\subsection{Nonlinear Dynamical Systems}\label{sec:app:dynamical:nonlinear}

Our last application concerns the propagation of the distributional uncertainty through a nonlinear dynamical system. Consider the deterministic dynamical system
\begin{align}
\label{eq:nonlinear:dyn:sys}
    x_{t+1} = f(x_t), \quad x_0 \sim \Q_0 \in \ball{\varepsilon}{c}{\P_0},
\end{align}
with $f:\reals^n \to \reals^n$. 
We suppose that the initial condition $x_0$ is distributed according to some unknown probability distribution $\Q_0$, which belongs to an ambiguity set $\ball{\varepsilon}{c}{\P_0}$, where, as above, $\P_0$ could be an empirical distribution over finitely many samples. 
\cref{thm:nonlin:trans} allows us to propagate the distributional uncertainty encapsulated in the \gls{acr:ot} ambiguity set $\ball{\varepsilon}{c}{\P_0}$ and, thus, to characterize the distributional uncertainty in $x_t$.

\begin{corollary}
\label{cor:prop:nonlin:dyn}
Consider the dynamical system~\eqref{eq:nonlinear:dyn:sys}.
If $f$ is injective with left inverse $f^{-1}$, then
\begin{align*}
    x_t \sim \Q_t \in
    \ball{\varepsilon}{c \circ (f^{-t} \times f^{-t})}{\pushforward{f^{t}}\P},
\end{align*}
with $f^{t} = f \circ \ldots \circ f$ and $f^{-t} = f^{-1} \circ \ldots \circ f^{-1}$.
\end{corollary}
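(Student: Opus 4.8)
The plan is to recognize that the state at time $t$ is a deterministic function of the initial condition, and then to invoke the injective case of~\cref{thm:nonlin:trans}. First I would unroll the recursion $x_{t+1}=f(x_t)$ to obtain $x_t=f^t(x_0)$, so that the law of $x_t$ is exactly the pushforward $\Q_t=\pushforward{f^t}\Q_0$. Since $\Q_0$ is an arbitrary element of $\ball{\varepsilon}{c}{\P_0}$, this immediately gives the membership $\Q_t\in\pushforward{f^t}\ball{\varepsilon}{c}{\P_0}$, and the whole task reduces to controlling the pushforward of an \gls{acr:ot} ambiguity set under the $t$-fold composition $f^t$.

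Next I would verify that $f^t$ inherits the injectivity of $f$ and that $f^{-t}\coloneqq\inv{f}\circ\cdots\circ\inv{f}$ is a genuine left inverse of $f^t$. Injectivity is immediate, since a composition of injective maps is injective, so a left inverse is guaranteed to exist. To identify $f^{-t}$ as such a left inverse, I would argue by induction on $t$: using the factorizations $f^{-t}=f^{-(t-1)}\circ\inv{f}$ and $f^t=f\circ f^{t-1}$ together with $\inv{f}\circ f=\Id$, one gets $f^{-t}\circ f^t=f^{-(t-1)}\circ(\inv{f}\circ f)\circ f^{t-1}=f^{-(t-1)}\circ f^{t-1}=\Id$, which closes the induction. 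Hence $f^t$ is injective with left inverse $f^{-t}$, exactly as required to apply the theorem.

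With these facts in hand, I would apply the injective case~\eqref{eq:B_epsilon:injective:f} of~\cref{thm:nonlin:trans} to the map $f^t$ and its left inverse $f^{-t}$, obtaining the inclusion $\pushforward{f^t}\ball{\varepsilon}{c}{\P_0}\subset\ball{\varepsilon}{c\circ(\tensorProd{f^{-t}}{f^{-t}})}{\pushforward{f^t}\P_0}$. Chaining this with the membership $\Q_t\in\pushforward{f^t}\ball{\varepsilon}{c}{\P_0}$ from the first step yields the claimed $\Q_t\in\ball{\varepsilon}{c\circ(\tensorProd{f^{-t}}{f^{-t}})}{\pushforward{f^t}\P_0}$ (the center $\P$ in the statement being $\P_0$).

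The argument is essentially a clean specialization of~\cref{thm:nonlin:trans} to the iterated map, so no step is genuinely difficult. The one point that requires care is the inductive verification that $f^{-t}$ is a left inverse of $f^t$: this relies crucially on $\inv{f}$ being a \emph{left} (rather than two-sided) inverse, so the cancellations must be performed from the outside in, peeling off one copy of $f$ at a time, and one must not inadvertently assume $f\circ\inv{f}=\Id$.
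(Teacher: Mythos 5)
Your proposal is correct and matches the paper's (implicit) argument exactly: the paper states \cref{cor:prop:nonlin:dyn} as a direct consequence of the injective case~\eqref{eq:B_epsilon:injective:f} of \cref{thm:nonlin:trans} applied to the iterated map $f^t$, which is precisely what you do. Your additional care in verifying by induction that $f^{-t}$ is a left inverse of $f^t$ using only $\inv{f}\circ f=\Id$ is a worthwhile detail that the paper leaves unstated.
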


Unfortunately, the nonlinearity of~\eqref{eq:nonlinear:dyn:sys} prevents us from replicating our results in~\cref{prop:applications:propagation:linear} for the \gls{acr:lti} case (in particular, additive and multiplicative noise) without resorting to Lipschitz bounds.
For instance, in the case of additive process noise, there is no explicit expression for the state $x_t$ as a function of $x_0$ and the noise trajectory $\mathbf{w}_{[t-1]}$, which was crucial to prove 2) in~\cref{prop:applications:propagation:linear}. We leave the analysis of additive and multiplicative noise in the context of nonlinear dynamical systems to future research, and conclude this section with an outlook on some exciting \emph{open problems}.

\smallskip

\textbf{1) Distributionally robust control}. Our results can be directly employed in a variety of Distributionally Robust Optimal Control and MPC formulations, offering many advantages:
\begin{itemize}
    \item[(i)] \emph{Computation}. Propagating \gls{acr:ot} ambiguity sets from the noise to the state leads to much smaller optimization problems (as in \cref{prop:app:distributionally robust trajectory:reformulation}), whose dimension is independent of the control horizon. Moreover, we envision that a disturbance affine feedback policy leads to convex reformulations of DRC problems, where it is possible to optimize over both the feedforward and feedback terms.

    \smallskip
    
    \item[(ii)] \emph{Analysis}. The propagated \gls{acr:ot} ambiguity set capturing the distributional uncertainty in the state (recall \cref{prop:applications:propagation:linear}) unveils the role of the feedforward and feedback terms in the control input $u_t = K x_t + v_t$: $v_t$ controls the \emph{position}, while $K$ controls the \emph{shape} and \emph{size} of this \gls{acr:ot} ambiguity set (see also \cite[Section~IV-B]{aolaritei2023capture}).
\end{itemize}

\smallskip

\textbf{2) Distributionally robust filtering}. We envision that our results can play a major role in developing and studying novel distributionally robust filtering formulations, e.g., ensemble Kalman filter or particle filter. Specifically, we believe that the propagation of finitely many samples coupled with a robustification via \gls{acr:ot} ambiguity sets (which captures the underlying true distribution) allows us to rigorously quantify uncertainty in such filtering algorithms.

\smallskip

\textbf{3) Robustify control in probability spaces}. We envision that the distributionally robust results proposed in this paper can serve as the right baseline to robustify existing control approaches in the probability space (e.g., steering of distributions); see \cite{chen2021optimal,adu2022optimal,singh2020inference,terpin2023dynamic,taghvaei2021optimal,krishnan2020probabilistic,emerick2023continuum,elamvazhuthi2018optimal,balci2022exact,sivaramakrishnan2022distribution}, and the references therein.

\smallskip

\textbf{4) Convergence, invariance, and stability analysis}. Finally, we strongly believe that our results pave the way to studying the properties of dynamical systems affected by distributional uncertainty. Specifically, our exact propagation results allow to \emph{exactly embed} such systems in the probability space as a sequence of \emph{controlled} \gls{acr:ot} ambiguity sets. Then, convergence and invariance properties can be naturally formulated as \emph{convergence and invariance of \gls{acr:ot} ambiguity sets}. Moreover, we envision that these results can be employed in studying the stability of systems affected by distributional uncertainty.

\bibliographystyle{unsrt}
\bibliography{references}

\begin{thebibliography}{10}

\bibitem{Peyman2018}
Peyman Mohajerin~Esfahani and Daniel Kuhn.
\newblock {Data-driven distributionally robust optimization using the
  Wasserstein metric: performance guarantees and tractable reformulations}.
\newblock {\em Mathematical Programming}, 171(1):115--166, 2018.

\bibitem{sinha2017certifying}
Aman Sinha, Hongseok Namkoong, Riccardo Volpi, and John Duchi.
\newblock Certifying some distributional robustness with principled adversarial
  training.
\newblock {\em arXiv preprint arXiv:1710.10571}, 2017.

\bibitem{Blanchet2019}
Jose Blanchet and Karthyek Murthy.
\newblock Quantifying distributional model risk via optimal transport.
\newblock {\em Mathematics of Operations Research}, 44(2):565--600, 4 2019.

\bibitem{gao2022finite}
Rui Gao.
\newblock Finite-sample guarantees for {W}asserstein distributionally robust
  optimization: Breaking the curse of dimensionality.
\newblock {\em Operations Research}, 2022.

\bibitem{shafieezadeh2023new}
Soroosh Shafieezadeh-Abadeh, Liviu Aolaritei, Florian D{\"o}rfler, and Daniel
  Kuhn.
\newblock New perspectives on regularization and computation in optimal
  transport-based distributionally robust optimization.
\newblock {\em arXiv preprint arXiv:2303.03900}, 2023.

\bibitem{yang2020wasserstein}
Insoon Yang.
\newblock Wasserstein distributionally robust stochastic control: A data-driven
  approach.
\newblock {\em IEEE Transactions on Automatic Control}, 66(8):3863--3870, 2020.

\bibitem{aolaritei2023wasserstein}
Liviu Aolaritei, Marta Fochesato, John Lygeros, and Florian D{\"o}rfler.
\newblock Wasserstein tube mpc with exact uncertainty propagation.
\newblock {\em arXiv preprint arXiv:2304.12093}, 2023.

\bibitem{zhong2023efficient}
Zhengang Zhong, Ehecatl~Antonio Del Rio-Chanona, and Panagiotis Petsagkourakis.
\newblock An efficient data-driven distributionally robust {MPC} leveraging
  linear programming.
\newblock In {\em 2023 American Control Conference (ACC)}, pages 2022--2027.
  IEEE, 2023.

\bibitem{micheli2022data}
Francesco Micheli, Tyler Summers, and John Lygeros.
\newblock Data-driven distributionally robust mpc for systems with uncertain
  dynamics.
\newblock In {\em 2022 IEEE 61st Conference on Decision and Control (CDC)},
  pages 4788--4793. IEEE, 2022.

\bibitem{fochesato2022data}
Marta Fochesato and John Lygeros.
\newblock Data-driven distributionally robust bounds for stochastic model
  predictive control.
\newblock In {\em 2022 IEEE 61st Conference on Decision and Control (CDC)},
  pages 3611--3616. IEEE, 2022.

\bibitem{mark2021data}
Christoph Mark and Steven Liu.
\newblock Data-driven distributionally robust {MPC}: An indirect feedback
  approach.
\newblock {\em arXiv preprint arXiv:2109.09558}, 2021.

\bibitem{coulson2021distributionally}
Jeremy Coulson, John Lygeros, and Florian D{\"o}rfler.
\newblock Distributionally robust chance constrained data-enabled predictive
  control.
\newblock {\em IEEE Transactions on Automatic Control}, 67(7):3289--3304, 2021.

\bibitem{navsalkar2023data}
Atharva Navsalkar and Ashish~R Hota.
\newblock Data-driven risk-sensitive model predictive control for safe
  navigation in multi-robot systems.
\newblock In {\em 2023 IEEE International Conference on Robotics and Automation
  (ICRA)}, pages 1442--1448. IEEE, 2023.

\bibitem{kim2023distributional}
Kihyun Kim and Insoon Yang.
\newblock Distributional robustness in minimax linear quadratic control with
  wasserstein distance.
\newblock {\em SIAM Journal on Control and Optimization}, 61(2):458--483, 2023.

\bibitem{mcallister2023inherent}
Robert~D McAllister and James~B Rawlings.
\newblock On the inherent distributional robustness of stochastic and nominal
  model predictive control.
\newblock {\em IEEE Transactions on Automatic Control}, 2023.

\bibitem{zolanvari2023iterative}
Alireza Zolanvari and Ashish Cherukuri.
\newblock Iterative risk-constrained model predictive control: A data-driven
  distributionally robust approach.
\newblock {\em arXiv preprint arXiv:2308.11510}, 2023.

\bibitem{shafieezadeh2018wasserstein}
Soroosh Shafieezadeh~Abadeh, Viet~Anh Nguyen, Daniel Kuhn, and Peyman~M
  Mohajerin~Esfahani.
\newblock Wasserstein distributionally robust kalman filtering.
\newblock {\em Advances in Neural Information Processing Systems}, 31, 2018.

\bibitem{wang2022distributionally}
Shixiong Wang.
\newblock Distributionally robust state estimation for nonlinear systems.
\newblock {\em IEEE Transactions on Signal Processing}, 70:4408--4423, 2022.

\bibitem{han2023distributionally}
Bingyan Han.
\newblock Distributionally robust kalman filtering with volatility uncertainty.
\newblock {\em arXiv preprint arXiv:2302.05993}, 2023.

\bibitem{lotidis2023wasserstein}
Kyriakos Lotidis, Nicholas Bambos, Jose Blanchet, and Jiajin Li.
\newblock Wasserstein distributionally robust linear-quadratic estimation under
  martingale constraints.
\newblock In {\em International Conference on Artificial Intelligence and
  Statistics}, pages 8629--8644. PMLR, 2023.

\bibitem{brouillon2023regularization}
Jean-S{\'e}bastien Brouillon, Florian D{\"o}rfler, and Giancarlo
  Ferrari-Trecate.
\newblock Regularization for distributionally robust state estimation and
  prediction.
\newblock {\em IEEE Control Systems Letters}, 2023.

\bibitem{aolaritei2023capture}
Liviu Aolaritei, Nicolas Lanzetti, and Florian D{\"o}rfler.
\newblock Capture, propagate, and control distributional uncertainty.
\newblock {\em arXiv preprint arXiv:2304.02235}, 2023.

\bibitem{aolaritei2022uncertainty}
Liviu Aolaritei, Nicolas Lanzetti, Hongruyu Chen, and Florian D{\"o}rfler.
\newblock Uncertainty propagation via optimal transport ambiguity sets.
\newblock {\em arXiv preprint arXiv:2205.00343}, 2022.

\bibitem{boskos2023high}
Dimitris Boskos, Jorge Cort{\'e}s, and Sonia Mart{\'\i}nez.
\newblock High-confidence data-driven ambiguity sets for time-varying linear
  systems.
\newblock {\em IEEE Transactions on Automatic Control}, 2023.

\bibitem{boskos2020data}
Dimitris Boskos, Jorge Cort{\'e}s, and Sonia Mart{\'\i}nez.
\newblock Data-driven ambiguity sets with probabilistic guarantees for dynamic
  processes.
\newblock {\em IEEE Transactions on Automatic Control}, 66(7):2991--3006, 2020.

\bibitem{VanParys2016}
Bart~PG Van~Parys, Daniel Kuhn, Paul~J Goulart, and Manfred Morari.
\newblock Distributionally robust control of constrained stochastic systems.
\newblock {\em IEEE Transactions on Automatic Control}, 61(2):430--442, 2015.

\bibitem{coppens2021data}
Peter Coppens and Panagiotis Patrinos.
\newblock Data-driven distributionally robust {MPC} for constrained stochastic
  systems.
\newblock {\em IEEE Control Systems Letters}, 6:1274--1279, 2021.

\bibitem{schuurmans2023general}
Mathijs Schuurmans and Panagiotis Patrinos.
\newblock A general framework for learning-based distributionally robust {MPC}
  of markov jump systems.
\newblock {\em IEEE Transactions on Automatic Control}, 2023.

\bibitem{dixit2022distributionally}
Anushri Dixit, Mohamadreza Ahmadi, and Joel~W Burdick.
\newblock Distributionally robust model predictive control with total variation
  distance.
\newblock {\em IEEE Control Systems Letters}, 6:3325--3330, 2022.

\bibitem{romao2023distributionally}
Licio Romao, Ashish~R Hota, and Alessandro Abate.
\newblock Distributionally robust optimal and safe control of stochastic
  systems via kernel conditional mean embedding.
\newblock {\em arXiv preprint arXiv:2304.00644}, 2023.

\bibitem{Villani2009a}
Cédric Villani.
\newblock {\em Optimal Transport: Old and New}.
\newblock Springer-Verlag Berlin Heidelberg, 2009.

\bibitem{gibbs2002choosing}
Alison~L Gibbs and Francis~Edward Su.
\newblock On choosing and bounding probability metrics.
\newblock {\em International statistical review}, 70(3):419--435, 2002.

\bibitem{aolaritei2023hedging}
Liviu Aolaritei, Boubacar Bangoura, Nicolas Lanzetti, Saverio Bolognani, and
  Florian D{\"o}rfler.
\newblock Hedging against black swans in renewable energy markets via
  distributionally robust optimization.
\newblock {\em Work in Progress}, 2023.

\bibitem{bullo2020lectures}
Francesco Bullo.
\newblock {\em Lectures on network systems}, volume~1.
\newblock Kindle Direct Publishing Seattle, DC, USA, 2020.

\bibitem{sullivan2015introduction}
Timothy~John Sullivan.
\newblock {\em Introduction to uncertainty quantification}, volume~63.
\newblock Springer, 2015.

\bibitem{summers2021distributionally}
Tyler Summers and Maryam Kamgarpour.
\newblock Distributionally robust bootstrap optimization.
\newblock {\em arXiv preprint arXiv:2112.13932}, 2021.

\bibitem{Fournier2015}
Nicolas Fournier and Arnaud Guillin.
\newblock On the rate of convergence in {W}asserstein distance of the empirical
  measure.
\newblock {\em Probability Theory and Related Fields}, 162(3-4):707--738, 8
  2015.

\bibitem{chen2021optimal}
Yongxin Chen, Tryphon~T Georgiou, and Michele Pavon.
\newblock Optimal transport in systems and control.
\newblock {\em Annual Review of Control, Robotics, and Autonomous Systems},
  4:89--113, 2021.

\bibitem{adu2022optimal}
Daniel~Owusu Adu, Tamer Ba{\c{s}}ar, and Bahman Gharesifard.
\newblock Optimal transport for a class of linear quadratic differential games.
\newblock {\em IEEE Transactions on Automatic Control}, 67(11):6287--6294,
  2022.

\bibitem{singh2020inference}
Rahul Singh, Isabel Haasler, Qinsheng Zhang, Johan Karlsson, and Yongxin Chen.
\newblock Inference with aggregate data: An optimal transport approach.
\newblock {\em arXiv preprint arXiv:2003.13933}, 2020.

\bibitem{terpin2023dynamic}
Antonio Terpin, Nicolas Lanzetti, and Florian D{\"o}rfler.
\newblock Dynamic programming in probability spaces via optimal transport.
\newblock {\em arXiv preprint arXiv:2302.13550}, 2023.

\bibitem{taghvaei2021optimal}
Amirhossein Taghvaei and Prashant~G Mehta.
\newblock Optimal transportation methods in nonlinear filtering.
\newblock {\em IEEE Control Systems Magazine}, 41(4):34--49, 2021.

\bibitem{krishnan2020probabilistic}
Vishaal Krishnan and Sonia Mart{\'\i}nez.
\newblock A probabilistic framework for moving-horizon estimation: Stability
  and privacy guarantees.
\newblock {\em IEEE Transactions on Automatic Control}, 66(4):1817--1824, 2020.

\bibitem{emerick2023continuum}
Max Emerick and Bassam Bamieh.
\newblock Continuum swarm tracking control: A geometric perspective in
  {W}asserstein space.
\newblock {\em arXiv preprint arXiv:2303.15638}, 2023.

\bibitem{elamvazhuthi2018optimal}
Karthik Elamvazhuthi, Piyush Grover, and Spring Berman.
\newblock Optimal transport over deterministic discrete-time nonlinear systems
  using stochastic feedback laws.
\newblock {\em IEEE control systems letters}, 3(1):168--173, 2018.

\bibitem{balci2022exact}
Isin~M Balci and Efstathios Bakolas.
\newblock Exact {SDP} formulation for discrete-time covariance steering with
  {W}asserstein terminal cost.
\newblock {\em arXiv preprint arXiv:2205.10740}, 2022.

\bibitem{sivaramakrishnan2022distribution}
Vignesh Sivaramakrishnan, Joshua Pilipovsky, Meeko Oishi, and Panagiotis
  Tsiotras.
\newblock Distribution steering for discrete-time linear systems with general
  disturbances using characteristic functions.
\newblock In {\em 2022 American Control Conference (ACC)}, pages 4183--4190.
  IEEE, 2022.

\bibitem{Santambrogio2015}
Filippo Santambrogio.
\newblock {\em Optimal Transport for Applied Mathematicians}.
\newblock Birkh{\"{a}}user, Cham, 2015.

\end{thebibliography}

\appendix
\section{Proofs}
\subsection{Proofs for~\cref{sec:propagation}}
\begin{proof}[Proof of~\cref{lemma:stability set plans}]
We start with the proof of the inclusion $\pushforward{(f\times f)}\setPlans{\P}{\Q}
\subset \setPlans{\pushforward{f} \P}{\pushforward{f} \Q}$. We consider $\gamma \in \setPlans{\P}{\Q}$, and we want to prove that $\pushforward{(f\times f)} \gamma \in \setPlans{\pushforward{f}\P}{\pushforward{f} \Q}$, i.e., the marginals of $\pushforward{(f\times f)} \gamma$ are $\pushforward{f} \P$ and $\pushforward{f} \Q$. For any Borel and bounded test function $\phi:\mathcal Y \to \reals$, we have that
\begin{align*}
    \int_{\mathcal Y \times \mathcal Y} \!\phi(y_1) \d(\pushforward{(f\!\times \!f)}\gamma)(y_1,y_2)
    &=
    \int_{\mathcal X \times \mathcal X}\! \phi(f(x_1))\d\gamma(x_1,x_2) 
    \\
    =
    \int_{\mathcal X} \phi(f(x_1))\d\P(x_1) 
    &=
    \int_{\mathcal Y} \phi(y_1)\d(\pushforward{f}\P)(y_1),
\end{align*}
and therefore $\pushforward{{\proj{1}}} (\pushforward{(f\times f)}\gamma)=\pushforward{f} \P$, i.e., the first marginal of $\pushforward{(f\times f)} \gamma$ is $\pushforward{f} \P$. Analogously,
$\pushforward{{\proj{2}}}(\pushforward{(f\times f)}\gamma)=\pushforward{f} \Q$.

We now prove $\setPlans{\pushforward{f} \P}{\pushforward{f} \Q} \subset \pushforward{(f\times f)} \setPlans{\P}{\Q}$. We consider $\gamma \in \setPlans{\pushforward{f} \P}{\pushforward{f} \Q}$ and seek $\tilde\gamma\in\setPlans{\P}{\Q}$ such that $\pushforward{(f\times f)} \tilde\gamma=\gamma$. For this, let $\gamma_{12}\coloneqq\pushforward{(\tensorProd{\Id_{\mathcal{X}}}{f})}\P\in\setPlans{\P}{\pushforward{f}\P}$, $\gamma_{23}\coloneqq\gamma\in\setPlans{\pushforward{f}\P}{\pushforward{f}\Q},$ and $
\gamma_{34}\coloneqq\pushforward{(\tensorProd{f}{\Id_{\mathcal X}})}\Q\in\setPlans{\pushforward{f}\Q}{\Q}$.
Since $\pushforward{\proj{2}}\gamma_{12}=\pushforward{\proj{1}}\gamma_{23}$ and $\pushforward{\proj{1}}\gamma_{34}=\pushforward{\proj{2}}\gamma_{23}$, the Gluing Lemma \cite[Lemma~5.5]{Santambrogio2015} ensures the existence of a measure $\bar\gamma\in\probSpace{\mathcal X\times \mathcal Y\times \mathcal Y\times \mathcal X}$ satisfying
$
    \pushforward{(\tensorProd{\proj{1}}{\proj{2}})}\bar\gamma=\gamma_{12},
    \pushforward{(\tensorProd{\proj{2}}{\proj{3}})}\bar\gamma=\gamma_{23},$ 
    and 
$
    \pushforward{(\tensorProd{\proj{3}}{\proj{4}})}\bar\gamma=\gamma_{34}.
$
In particular, $y_1=f(x_1)$ $\bar\gamma$-a.e., since 
\begin{align*}
    \int_{\mathcal X\times \mathcal Y}&\norm{y_1-f(x_1)}^2\d\bar\gamma(x_1,y_1,y_2,x_2)
    \\
    &=
    \int_{\mathcal X\times \mathcal Y}\norm{y_1-f(x_1)}^2\d\gamma_{12}(x_1,y_1)
    \\
    &=
    \int_{\mathcal X\times \mathcal Y}\norm{y_1-f(x_1)}^2\d(\pushforward{(\tensorProd{\Id_{\mathcal X}}{f})}\P)(x_1,y_1)
    \\
    &=
    \int_{\mathcal X}\norm{f(x_1)-f(x_1)}^2\d\P(x_1) =0.
\end{align*}
Analogously, $y_2=f(x_2)$ $\bar\gamma$-a.e..
Consider now $\tilde\gamma\coloneqq\pushforward{(\tensorProd{\proj{1}}{\proj{4}})}\bar\gamma$. We show that this is the probability distribution that we are looking for. In particular, $\tilde\gamma\in\setPlans{\P}{\Q}$ since, for any Borel and bounded test function $\phi:\mathcal X\to\reals$, we have that
\begin{align*}
    \int_{\mathcal X \times \mathcal X}\phi(x_1) &\d\tilde\gamma(x_1,x_2)
    =
    \int_{\mathcal X\times \mathcal Y\times \mathcal Y\times \mathcal X}\!\phi(x_1)\d\bar\gamma(x_1,y_1,y_2,x_2)
    \\
    &=
    \int_{\mathcal X \times \mathcal Y}\phi(x_1)\d\gamma_{12}(x_1,y_1)
    =
    \int_{\mathcal X}\phi(x_1)\d\P(x_1),
\end{align*}
which shows that $\pushforward{\proj{1}}\tilde\gamma=\P$. Analogously, $\pushforward{\proj{2}}\tilde\gamma=\Q$.
Finally, we show that $\pushforward{(f\times f)} \tilde\gamma=\gamma$. This follows from
\begin{align*}
    &\int_{\mathcal Y\times \mathcal Y}\phi(y_1,y_2)\d(\pushforward{(\tensorProd{f}{f})}\tilde\gamma)(y_1,y_2)
    \\
    &=
    \int_{\mathcal X\times \mathcal X}\phi(f(x_1),f(x_2))\d\tilde\gamma(x_1,x_2)
    \\
    &=
    \int_{\mathcal X\times \mathcal Y\times \mathcal Y\times \mathcal X}\phi(f(x_1),f(x_2))\d\bar\gamma(x_1,y_1,y_2,x_2)
    \\
    &=
    \int_{\mathcal X\times \mathcal Y\times \mathcal Y\times \mathcal X}\phi(y_1,y_2)\d\bar\gamma(x_1,y_1,y_2,x_2)
    \\
    &=
    \int_{\mathcal Y\times \mathcal Y}\phi(y_1,y_2)\d\gamma_{23}(y_1,y_2)
    =
    \int_{\mathcal Y\times \mathcal Y}\phi(y_1,y_2)\d\gamma(y_1,y_2),
\end{align*}
for any Borel and bounded test function $\phi:\mathcal Y \times \mathcal Y \to\reals$. Here, we used that $y_i=f(x_i)$ $\bar\gamma$-a.e.. This concludes the proof.
\end{proof}

\begin{proof}[Proof of~\cref{prop:Wc:pushforward}]
\cref{eq:W_c:arbitrary:f} follows from \cref{lemma:stability set plans}:
    \begin{align*}
        \transportCost{d}{\pushforward{f}\P}{\pushforward{f}\Q}\!
        &=\!\!
        \inf_{\gamma\in\setPlans{\pushforward{f}\P}{\pushforward{f}\Q}}\int_{\mathcal Y\times \mathcal Y}\!d(y_1,y_2)\d\gamma(y_1,y_2)
        \\
        &=\!\!
        \inf_{\gamma\in\pushforward{(\tensorProd{f}{f})}\setPlans{\P}{\Q}}\int_{\mathcal Y\times \mathcal Y}\!d(y_1,y_2)\d\gamma(y_1,y_2)
        \\
        &=\!\!
        \inf_{\tilde\gamma\in\setPlans{\P}{\Q}}\int_{\mathcal Y\times \mathcal Y}\!d(y_1,y_2)\d(\pushforward{(f\!\times\! f)}\tilde\gamma)(y_1,y_2)
        \\
        &=\!\!
        \inf_{\tilde\gamma\in\setPlans{\P}{\Q}}\int_{\mathcal X\times \mathcal X}d(f(x_1),f(x_2))\d\tilde \gamma(x_1,x_2)
        \\ 
        &=
        \transportCost{d\circ (\tensorProd{f}{f})}{\P}{\Q}.
    \end{align*}
The proofs of \eqref{eq:W_c:bijective:f} and \eqref{eq:W_c:injective:f} can be merged as follows. By definition $\inv{f}\circ f=\Id_{\mathcal X}$, and so $(\tensorProd{\inv{f}}{\inv{f}})\circ(\tensorProd{f}{f})=\Id_{\mathcal X\times \mathcal X}$.
    Thus,~\eqref{eq:W_c:arbitrary:f} with $d = c \circ (\inv{f} \times \inv{f})$ gives
    \begin{equation*}
    \begin{aligned}
        \transportCost{c}{\P}{\Q}
        &=
        \transportCost{c\circ(\tensorProd{\inv{f}}{\inv{f}})\circ(\tensorProd{f}{f})}{\P}{\Q}
        \\
        &=
        \transportCost{c\circ (\tensorProd{\inv{f}}{\inv{f}})}{\pushforward{f}\P}{\pushforward{f}\Q}. 
    \end{aligned}
    \end{equation*}
We now focus on \eqref{eq:W_c:surjective:f}, which can be proven as follows:
    \renewcommand{\qedsymbol}{}
    \begin{align*}
        &\transportCost{c\circ (\tensorProd{\inv{f}}{\inv{f}})}{\pushforward{f}\P}{\pushforward{f}\Q}
        \\
        &=
        \inf_{\gamma\in\setPlans{\pushforward{f}\P}{\pushforward{f}\Q}}\int_{\mathcal Y\times \mathcal Y}c(\inv{f}(y_1),\inv{f}(y_2))\d\gamma(y_1,y_2)
        \\
        &=
        \inf_{\gamma\in\pushforward{(\tensorProd{f}{f})}\setPlans{\P}{\Q}}\int_{\mathcal Y\times \mathcal Y}c(\inv{f}(y_1),\inv{f}(y_2))\d\gamma(y_1,y_2)
        \\
        &=
        \inf_{\tilde\gamma\in\setPlans{\P}{\Q}}\int_{\mathcal Y\times \mathcal Y}c(\inv{f}(y_1),\inv{f}(y_2))\d(\pushforward{(\tensorProd{f}{f})}\tilde\gamma)(y_1,y_2)
        \\
        &= \inf_{\tilde\gamma\in\setPlans{\P}{\Q}}\int_{\mathcal X\times \mathcal X}c(\inv{f}(f(x_1)),\inv{f}(f(x_2)))\d\tilde\gamma(x_1,x_2)
        \\
        &= \transportCost{c\circ (\tensorProd{\inv{f}}{\inv{f}}) \circ (\tensorProd{f}{f})}{\P}{\Q}.
        \hspace{4.2cm}\square\qedhere
    \end{align*}
\end{proof}

\begin{proof}[Proof of~\cref{thm:nonlin:trans}]
We start by proving \eqref{eq:B_epsilon:arbitrary:f}. Let $\mathbb Q \in \mathbb B_\varepsilon^{d \circ (f \times f)} (\P)$. To establish that $\pushforward{f} \mathbb Q \in \mathbb B_\varepsilon^{d}(\pushforward{f} \P)$, we use~\eqref{eq:W_c:arbitrary:f}:
\begin{multline}
\label{eq:nonlin:trans:proof:1}
    \mathbb Q \in \mathbb B_\varepsilon^{d \circ (f \times f)} (\P) 
    \iff 
    \transportCost{d\circ (\tensorProd{f}{f})}{\P}{\Q} \leq \varepsilon 
    \\
    \iff 
    \transportCost{d}{\pushforward{f}\P}{\pushforward{f}\Q}\leq \varepsilon 
    \iff 
    \pushforward{f} \mathbb Q \in \mathbb B_\varepsilon^{d}(\pushforward{f} \P).
\end{multline}
Before proceeding with the proof, we highlight that the chain of equivalences \eqref{eq:nonlin:trans:proof:1} cannot be used to get equality of sets in \eqref{eq:B_epsilon:arbitrary:f}. The reason for this is that not all the probability distributions in the ambiguity set $\mathbb B_\varepsilon^{d}(\pushforward{f} \P)$ can be written as the pushforward $\pushforward{f}$ of a probability distribution over $\mathcal{X}$. This will become more clear for injective maps, in the proof of \eqref{eq:B_epsilon:injective:f}.

We will now prove the bijective case \eqref{eq:B_epsilon:bijective:f}. The inclusion $\pushforward{f} \ball{\varepsilon}{c}{\P} \subset \mathbb B_\varepsilon^{c \circ (\inv{f} \times \inv{f})}(\pushforward{f} \P)$ follows directly from \eqref{eq:B_epsilon:arbitrary:f}, by considering $d = c \circ (\inv{f} \times \inv{f})$, since $(\inv{f} \times \inv{f})\circ (f \times f) = \Id_{\mathcal X \times \mathcal X}$. We will now prove the converse inclusion, i.e., $\mathbb B_\varepsilon^{c \circ (\inv{f} \times \inv{f})}(\pushforward{f} \P) \subset \pushforward{f} \ball{\varepsilon}{c}{\P}$:
\begin{align*}
    &\ball{\varepsilon}{c \circ (\inv{f} \times \inv{f})}{\pushforward{f} \P}
    = 
    \pushforward{(f \circ \inv{f})} \ball{\varepsilon}{c \circ (\inv{f} \times \inv{f})}{\pushforward{f} \P}
    \\
    &\!=\! 
    \pushforward{f} \pushforward{\inv{f}\!} \ball{\varepsilon}{c \circ (\inv{f} \times \inv{f})}{\pushforward{f} \P}
    \!\subset\! 
    \pushforward{f} \ball{\varepsilon}{c}{\pushforward{\inv{f}\!} \pushforward{f} \P}
    \!=\!
    \pushforward{f} \ball{\varepsilon}{c}{\P},
\end{align*}
where the inclusion follows from \eqref{eq:B_epsilon:arbitrary:f} reversing $\mathcal X$ and $\mathcal Y$.

We now proceed to the injective case \eqref{eq:B_epsilon:injective:f}. Similarly to the bijective case, notice that the inclusion $\pushforward{f} \ball{\varepsilon}{c}{\P} \subset \mathbb B_\varepsilon^{c \circ (\inv{f} \times \inv{f})}(\pushforward{f} \P)$ follows directly from \eqref{eq:B_epsilon:arbitrary:f}, with $d = c \circ (\inv{f} \times \inv{f})$. Thus, we only have to prove that $\pushforward{f}{\mathbb B_\varepsilon^{c}(\P)} = \pushforward{(f \circ \inv{f})}\mathbb B_\varepsilon^{c\circ (\tensorProd{\inv{f}}{\inv{f}})} (\pushforward{f}{\P})$. Towards this end, we start by proving the inclusion $\pushforward{f}{\mathbb B_\varepsilon^{c}(\P)} \subset \pushforward{(f \circ \inv{f})}\mathbb B_\varepsilon^{c\circ (\tensorProd{\inv{f}}{\inv{f}})}(\pushforward{f}{\P})$. This follows immediately from $\pushforward{f} \ball{\varepsilon}{c}{\P} \subset \mathbb B_\varepsilon^{c \circ (\inv{f} \times \inv{f})}(\pushforward{f} \P)$ since
\begin{align*}
    \pushforward{f} \ball{\varepsilon}{c}{\P} 
    &= 
    \pushforward{(f \circ \inv{f} \circ f)}\ball{\varepsilon}{c}{\P} 
    = 
    \pushforward{(f \circ \inv{f})} \pushforward{f} \ball{\varepsilon}{c}{\P} 
    \\
    &\subset 
    \pushforward{(f \circ \inv{f})} \ball{\varepsilon}{c \circ (\inv{f} \times \inv{f})}{\pushforward{f} \P}.
\end{align*}
We now focus on the converse direction:
\begin{align*}
    \pushforward{(f \circ \inv{f})}
    \ball{\varepsilon}{c\circ (\tensorProd{\inv{f}\!}{\!\inv{f}})}{\pushforward{f}{\P}}
    &= 
    \pushforward{f} \pushforward{\inv{f}} \ball{\varepsilon}{c\circ (\tensorProd{\inv{f}\!}{\!\inv{f}})}{\pushforward{f}{\P}}
    \\
    \overset{\eqref{eq:B_epsilon:arbitrary:f}}{\subset}
    \pushforward{f} \ball{\varepsilon}{c}{\pushforward{\inv{f}}\pushforward{f}\P}
    &= \pushforward{f} \ball{\varepsilon}{c}{\P}.
\end{align*}

We will now prove the surjective case \eqref{eq:B_epsilon:surjective:f}. We start with the inclusion $\mathbb B_\varepsilon^{c\circ (\tensorProd{\inv{f}}{\inv{f}})}(\pushforward{f}{\P}) \subset \pushforward{f}{\mathbb B_\varepsilon^{c}\left((\inv{f}\circ f)_\# \P\right)}$:
\begin{align*}
    &\mathbb B_\varepsilon^{c\circ (\tensorProd{\inv{f}}{\inv{f}})}(\pushforward{f}{\P})
    =
    \pushforward{(f \circ \inv{f})}\ball{\varepsilon}{c\circ (\tensorProd{\inv{f}}{\inv{f}})}{\pushforward{f}{\P}}
    \\
    &= 
    \pushforward{f} \pushforward{\inv{f}}\ball{\varepsilon}{c\circ (\tensorProd{\inv{f}}{\inv{f}})}{\pushforward{f}{\P}}
    \overset{\eqref{eq:B_epsilon:arbitrary:f}}{\subset}
    \pushforward{f} \ball{\varepsilon}{c}{\pushforward{\inv{f}}\pushforward{f}\P}
    \\
    &=
    \pushforward{f} \ball{\varepsilon}{c}{\pushforward{(\inv{f}\circ f)}\P}.
\end{align*}

Finally, we prove the equality $\pushforward{f}{\mathbb B_\varepsilon^{c\circ (\tensorProd{\inv{f}}{\inv{f}}) \circ (\tensorProd{f}{f})}(\P)} = \mathbb B_\varepsilon^{c\circ (\tensorProd{\inv{f}}{\inv{f}})}(\pushforward{f}{\P})$. The inclusion $\mathbb B_\varepsilon^{c\circ (\tensorProd{\inv{f}}{\inv{f}})}(\pushforward{f}{\P}) \supset \pushforward{f}{\mathbb B_\varepsilon^{c\circ (\tensorProd{\inv{f}}{\inv{f}}) \circ (\tensorProd{f}{f})}(\P)} $ follows directly from \eqref{eq:B_epsilon:arbitrary:f}, with $d = c \circ (\inv{f} \times \inv{f})$. 
We now focus on the converse inclusion, i.e., $\mathbb B_\varepsilon^{c\circ (\tensorProd{\inv{f}}{\inv{f}})}(\pushforward{f}{\P}) \subset \pushforward{f}{\mathbb B_\varepsilon^{c\circ (\tensorProd{\inv{f}}{\inv{f}}) \circ (\tensorProd{f}{f})}(\P)}$. Let $\mathbb Q_f \in \mathbb B_\varepsilon^{c\circ (\tensorProd{\inv{f}}{\inv{f}})}(\pushforward{f} \P)$. We need to prove that there exists $\mathbb Q \in \mathbb B_\varepsilon^{c\circ (\tensorProd{\inv{f}}{\inv{f}}) \circ (\tensorProd{f}{f})}(\P)$ such that $\mathbb Q_f = \pushforward{f} \mathbb Q$. We claim that one such $\Q$ is $\Q = \pushforward{\inv{f}} \Q_f$. Indeed, using \eqref{eq:W_c:surjective:f}, we have
    \begin{equation*}
    \begin{aligned}[b]
        &\hspace{-0.5cm}\transportCost{c\circ (\tensorProd{\inv{f}}{\inv{f}}) \circ (\tensorProd{f}{f})}{\P}{\pushforward{\inv{f}} \Q_f} 
        \\
        &= 
        \transportCost{c\circ (\tensorProd{\inv{f}}{\inv{f}})}{\pushforward{f}\P}{\pushforward{f} \pushforward{\inv{f}} \Q_f}
        \\
        &= 
        \transportCost{c\circ (\tensorProd{\inv{f}}{\inv{f}})}{\pushforward{f}\P}{\Q_f} 
        \leq 
        \varepsilon.
    \end{aligned}\qedhere 
    \end{equation*}
\end{proof}

\begin{proof}[Proof of~\cref{thm:lin:trans}]
The linear structure, together with the translation invariance and orthomonotonicity of the transportation cost, allows us to prove the following relationship:
\begin{align}
\label{eq:lin:trans:proof:1}
    \pushforward{A}\ball{\varepsilon}{c}{\P}
    \subset
    \ball{\varepsilon}{c \circ \pinv{A}}{\pushforward{A}\P}
\end{align}
for an arbitrary matrix $A$. In the nonlinear case, we did not have such a relationship, since there is no pseudoinverse for an arbitrary nonlinear transformation $f$. Indeed, the equivalent of \eqref{eq:lin:trans:proof:1} for a nonlinear transformation is \eqref{eq:B_epsilon:arbitrary:f}.

We will now prove the inclusion~\eqref{eq:lin:trans:proof:1}. Let $\Q \in \ball{\varepsilon}{c}{\P}$. Then, $\pushforward{A} \Q \in \mathbb B_\varepsilon^{c \circ \pinv{A}}(\pushforward{A}\P)$ can be shown as follows:
\begin{align*}
    &\hspace{-.5cm}\inf_{\gamma\in\setPlans{\pushforward{A}\P}{\pushforward{A}\Q}}
    \int_{\reals^m\times \reals^m}c(\pinv{A} y_1-\pinv{A} y_2)\d\gamma(y_1,y_2)
    \\
    \overset{\text{\cref{lemma:stability set plans}}}&{=}
    \inf_{\gamma\in \pushforward{(A \times A)}\setPlans{\P}{\Q}}
    \int_{\reals^m\times \reals^m}c(\pinv{A} y_1-\pinv{A} y_2)\d\gamma(y_1,y_2)
    \\ &=
    \inf_{\tilde{\gamma}\in \setPlans{\P}{\Q}}\int_{\reals^m\times \reals^m}c(\pinv{A}A x_1-\pinv{A}A x_2)\d\tilde{\gamma}(x_1,x_2)
    \\ &\leq
    \inf_{\tilde{\gamma}\in \setPlans{\P}{\Q}}\int_{\reals^m\times \reals^m}c(x_1- x_2)\d\tilde{\gamma}(x_1,x_2) \leq \varepsilon,
\end{align*}
where the inequality follows from orthomonotonicity of $c$ and $\pinv{A}A\in\reals^{n\times n}$ being the orthogonal projector onto $\kernel(A)^\perp=\range(\adjoint{A})$:
$
    c(x_1- x_2) = c(\pinv{A} A (x_1- x_2) + (\eye_{n\times n} - \pinv{A}A)(x_1- x_2)) 
    \geq c(\pinv{A}A (x_1- x_2)).
$

With \eqref{eq:lin:trans:proof:1}, we can now prove the chain of relationships \eqref{eq:B_epsilon:arbitrary:A}. 
We start with the inclusion $\pushforward{A}{\ball{\varepsilon}{c}{\P}} \subset \pushforward{(A\pinv{A})} \mathbb B_\varepsilon^{c \circ \pinv{A}}(\pushforward{A} \P)$:
\begin{align*}
    \pushforward{A} \ball{\varepsilon}{c}{\P} &= \pushforward{(A \pinv{A} A)}\ball{\varepsilon}{c}{\P} = \pushforward{(A\pinv{A})} \pushforward{A} \ball{\varepsilon}{c}{\P} 
    \\
    \overset{\eqref{eq:lin:trans:proof:1}}&{\subset} \pushforward{(A\pinv{A})} \mathbb B_\varepsilon^{c \circ \pinv{A}}(\pushforward{A}\P),
\end{align*}
where the first equality follows from the definition of pseudoinverse ($A = A \pinv{A} A$).
We now prove the equality $\pushforward{(A\pinv{A})} \mathbb B_\varepsilon^{c \circ \pinv{A}}(\pushforward{A} \P) = \pushforward{A} \mathbb B_\varepsilon^{c}(\pushforward{(\pinv{A} A)} \P)$.
The inclusion $\pushforward{(A\pinv{A})} \mathbb B_\varepsilon^{c \circ \pinv{A}}(\pushforward{A} \P)\subset\pushforward{A} \ball{\varepsilon}{c}{\pushforward{(\pinv{A} A)} \P}$ follows from
\begin{multline*}
    \pushforward{(A\pinv{A})} \ball{\varepsilon}{c \circ \pinv{A}}{\pushforward{A} \P}
    = \pushforward{A} \pushforward{\pinv{A}} \mathbb B_\varepsilon^{c \circ \pinv{A}}(\pushforward{A} \P) 
    \\
    \overset{\eqref{eq:B_epsilon:arbitrary:f}}{\subset}
    \pushforward{A} \ball{\varepsilon}{c}{\pushforward{\pinv{A}} \pushforward{A} \P}
    = \pushforward{A} \ball{\varepsilon}{c}{\pushforward{(\pinv{A} A)} \P}.
\end{multline*}
The converse inclusion follows from 
\begin{align*}
    \pushforward{A} &\mathbb B_\varepsilon^{c}(\pushforward{(\pinv{A} A)} \P) 
    = 
    \pushforward{(A \pinv{A} A)} \mathbb B_\varepsilon^{c}(\pushforward{(\pinv{A} A)} \P) 
    \\
    &=
    \pushforward{(A\pinv{A})} \pushforward{A} \mathbb B_\varepsilon^{c}(\pushforward{(\pinv{A} A)} \P)  
    \\
    \overset{\eqref{eq:lin:trans:proof:1}}&{\subset}
    \pushforward{(A\pinv{A})} \mathbb B_\varepsilon^{c \circ \pinv{A}}(\pushforward{(A \pinv{A} A)} \P)
     =
    \pushforward{(A\pinv{A})} \mathbb B_\varepsilon^{c \circ \pinv{A}}(\pushforward{A} \P).
\end{align*}
Finally, $\pushforward{A} \mathbb B_\varepsilon^{c}(\pushforward{(\pinv{A} A)} \P) \subset \mathbb B_\varepsilon^{c \circ \pinv{A}}(\pushforward{A}\P)$ follows from \eqref{eq:lin:trans:proof:1}:
\begin{align*}
    \pushforward{A} \mathbb B_\varepsilon^{c}(\pushforward{(\pinv{A} A)} \P) 
    \subset \mathbb B_\varepsilon^{c \circ \pinv{A}}(\pushforward{(A \pinv{A} A)} \P) 
    = \mathbb B_\varepsilon^{c \circ \pinv{A}}(\pushforward{A} \P).
\end{align*}
This concludes the proof for a general matrix $A$.

We now focus on the full row-rank case and show 
$
    \pushforward{A}\ball{\varepsilon}{c}{\P} 
    = 
    \ball{\varepsilon}{c \circ \pinv{A}}{\pushforward{A}\P},
$
with $\pinv{A}$ being the right inverse of $A$. In virtue of \eqref{eq:B_epsilon:arbitrary:A}, it suffices to prove that
$
    \ball{\varepsilon}{c \circ \pinv{A}}{\pushforward{A}\P}
    \subset
    \pushforward{A}\ball{\varepsilon}{c}{\P}.
$
Let $\Q\in\ball{\varepsilon}{c\circ\pinv{A}}{\pushforward{A}\P}$ and $\gamma\in\setPlans{\pushforward{A}\P}{\Q}$ be an optimal plan satisfying
$
    \int_{\reals^m\times\reals^m}c(\pinv{A}y_1-\pinv{A}y_2)\d\gamma(y_1,y_2)\leq\varepsilon.
$
Define the coupling $\bar\gamma=\pushforward{(\pinv{A}\times\pinv{A})}\gamma\in\setPlans{\pushforward{\pinv{A}A}\P}{\pushforward{\pinv{A}}\Q}$.
For clarity, we define by $x$, $x_i$ points in $\reals^n$, by $u$, $u_i$ points in $\mathcal U\coloneqq\range(\adjoint{A})$, and by $v$, $v_i$ points in $\mathcal V\coloneqq\kernel(A)$. By the Finite Rank Lemma, $\reals^n=\mathcal U\oplus \mathcal V$ and $\mathcal U^\perp= \mathcal V$, i.e., each $x$ can be uniquely decomposed in $u+v$ with $\innerProduct{u}{v}_X=0$ and $\pinv{A}A$ is the orthogonal projection on $\mathcal U$.
Thus, we see $\pinv{A}$ and $\pinv{A}A$ as maps $\reals^m \to \mathcal U$ and $\reals^n \to \mathcal U$, respectively.
By the Disintegration Theorem~\cite[\S 2.3]{Santambrogio2015}, there exists a $(\pushforward{(\pinv{A}A)}\P)$-a.e. uniquely determined family of probability distributions $\{\P_{u}\}_{u\in \mathcal U}$ on $\mathcal V$, such that 
$
    \d \P(u,v) = \d(\pushforward{(\pinv{A}A)}\P)(u) \otimes \d\P_{u}(v).
$
Similarly, there exists a $(\pushforward{(\pinv{A}A)}\P)$-a.e. uniquely determined family of probability distributions $\{(\bar\gamma_{u_1})\}_{u_1\in \mathcal U}$ on $\mathcal U$, such that
$
    \d\bar\gamma(u_1,u_2) = \d(\pushforward{(\pinv{A}A)}\P)(u_1) \otimes \d\bar\gamma_{u_1}(u_2).
$
Consider now the probability distribution $\bar\Q$ on $\reals^n$ defined by
\begin{equation*}
\begin{aligned}
    \d\bar\Q(u_1,v_1) \coloneqq \int_{\mathcal U} \d(\bar\gamma_{u_2} \otimes \P_{u_2})(u_1,v_1) \,\d(\pushforward{(\pinv{A}A)}\P)(u_2).
\end{aligned} 
\end{equation*}
We show that $\pushforward{A} \bar\Q = \Q$ and $\bar\Q \in \ball{\varepsilon}{c}{\P}$. Notice that this is enough to conclude the proof. For any Borel and bounded test function $\phi:\reals^m \to \reals$, we have
\begin{align*}
    \int_{\reals^m}
    &\phi(y)\d(\pushforward{A}\bar\Q)(y)  
    =
    \int_{\reals^n}
    \phi(A(u+v))\d\bar\Q(u,v)
    \\
    &=
    \int_{\reals^n}
    \phi(Au)\d\bar\Q(u,v)
    \\
    &=
    \int_{\reals^n \times \mathcal U}
    \phi(Au_1)\d(\bar\gamma_{u_2} \otimes \P_{u_2})(u_1,v_1) \,\d(\pushforward{(\pinv{A}A)}\P)(u_2)
    \\
    &=
    \int_{\mathcal U\times \mathcal U}
    \phi(Au_1)\d\bar\gamma_{u_2}(u_1) \,\d(\pushforward{(\pinv{A}A)}\P)(u_2)
    \\
    &=
    \int_{\mathcal U\times \mathcal U}
    \phi(Au_1)\d\bar\gamma(u_2,u_1)
    =
    \int_{\mathcal U}
    \phi(Au)\d(\pushforward{\pinv{A}}\Q)(u)
    \\
    &=
    \int_{\reals^m}
    \phi(A\pinv{A}y)\d\Q(y)
    =
    \int_{\reals^m}
    \phi(y)\d\Q(y),
\end{align*}
showing that $\pushforward{A} \bar\Q = \Q$. In the rest of the proof, we will show that $\bar\Q \in \ball{\varepsilon}{c}{\P}$. For this, we first define the coupling
\begin{align*}
    &\d\tilde \gamma(u_1,v_1,u_2,v_2)
    \coloneqq
    \pushforward{(\pi_{u_1} \times \pi_{v_1} \times \pi_{u_2} \times \pi_{v_2})}
    \\
    &\!\!\left(\d(\pushforward{(\pinv{A}A)}\P)(u_1) \!\otimes\! \left(\pushforward{(\Id \times \Id)}\d\P_{u_1}\right)(v_1,v_2) \!\otimes\! \d\bar\gamma_{u_1}(u_2)\right)\!.
\end{align*}
With the test function $\phi(u_1,v_1,u_2,v_2)=\norm{v_1-v_2}$, we see $v_1=v_2$ $\tilde\gamma$-a.e.. We now claim $\tilde \gamma\in\setPlans{\P}{\bar\Q}$. Indeed, for any Borel and bounded test function $\phi:\reals^n \to \reals$, we have
\begin{align*}
    \int_{\reals^n\times\reals^n}&
    \phi(u_1,v_1)\d\tilde\gamma(u_1,v_1,u_2,v_2)
    \\
    &=
    \int_{\reals^n}\phi(u_1,v_1) \d(\pushforward{(\pinv{A}A)}\P \otimes\P_{u_1})(u_1, v_1)
    \\
    &=
    \int_{\reals^n}\phi(u_1,v_1)\d\P(u_1,v_1),
\end{align*}
showing that the first marginal of $\tilde\gamma$ is $\P$, and
\begin{align*}  &\int_{\reals^n\times\reals^n}\phi(u_2,v_2)\d\tilde\gamma(u_1,v_1,u_2,v_2)
    \\
    &=
    \int_{\mathcal U\times\reals^n}\phi(u_2,v_2)\d(\bar\gamma_{u_1} \otimes \d\P_{u_1})(u_2,v_2)\d(\pushforward{(\pinv{A}A)}\P)(u_1)
    \\
    &=
    \int_{\reals^n}\phi(u_2,v_2)\int_{\mathcal U}\d(\bar\gamma_{u_1} \otimes \d\P_{u_1})(u_2,v_2)\d(\pushforward{(\pinv{A}A)}\P)(u_1)
    \\
    &=
    \int_{\reals^n}\phi(u_2,v_2)\d\bar\Q(u_2,v_2),
\end{align*}
showing that the second marginal of $\tilde\gamma$ is $\bar \Q$.
Finally, $\bar\Q \in \ball{\varepsilon}{c}{\P}$ follows from
\begin{align*}
    \transportCost{c}{\P}{\bar\Q}
    &\leq
    \int_{\reals^n\times\reals^n}c(u_1+v_1-(u_2+v_2))\d\tilde\gamma(u_1,v_1,u_2,v_2)
    \\
    &=
    \int_{\reals^n\times\reals^n}c(u_1-u_2)\d\tilde\gamma(u_1,v_1,u_2,v_2)
    \\
    &=
    \int_{\mathcal U\times \mathcal U}c(u_1-u_2)\d(\pushforward{(\pinv{A}A)}\P \otimes \bar\gamma_{u_1})(u_1,u_2)
    \\
    &=
    \int_{\mathcal U\times \mathcal U}c(u_1-u_2)\d(\pushforward{(\pinv{A}\times\pinv{A})}\gamma)(u_1,u_2)
    \\
    &=
    \int_{\reals^m\times\reals^m}c(\pinv{A}y_1-\pinv{A}y_2)\d\gamma(y_1,y_2)
    \leq 
    \varepsilon.
\end{align*}
This concludes the proof of the inclusion
$
    \ball{\varepsilon}{c \circ \pinv{A}}{\pushforward{A}\P}
    \subset
    \pushforward{A}\ball{\varepsilon}{c}{\P},
$
and, with it, the proof of~\cref{thm:lin:trans}.
\end{proof}

\subsection{Proofs for~\cref{sec:additive:multiplicative}}

\begin{proof}[Proof of~\cref{lemma:coupling:convolution}]
Let $\gamma \in \setPlans{\P_1}{\P_2}$. We want to show that $\gamma \ast (\pushforward{(\Id\times\Id)}\Q)$ is a coupling between $\P_1\ast\Q$ and $\P_2\ast\Q$. For any Borel and bounded test function $\phi:\reals^ n \to \reals$, we have
\begin{align*}
    &\int_{\reals^ n \times \reals^ n} \phi(x_1) \d (\gamma \ast \pushforward{(\Id\times\Id)}\Q)(x_1,x_2)
    \\
    &=
    \int_{\reals^ n \times \reals^ n} \phi(\pi_{1}(x_1,x_2)) \d (\gamma \ast \pushforward{(\Id\times\Id)}\Q)(x_1,x_2)
    \\
    \overset{\text{Def.~\ref{def:convolution}}}&{=}
    \int_{\reals^ n \times \reals^ n \times \reals^ n \times \reals^ n} \phi(\pi_{1}(y_1+z_1,y_2+z_2)) \\ &\hspace{3.3cm} \d (\gamma\otimes \pushforward{(\Id\times\Id)}\Q)(y_1,y_2,z_1,z_2)
    \\ &=
    \int_{\reals^ n \times \reals^ n} \phi(y_1+z_1) \d (\P_1\otimes \Q)(y_1,z_1)
    \\
    \overset{\text{Def.~\ref{def:convolution}}}&{=}
    \int_{\reals^ n} \phi(x_1) \d (\P_1\ast \Q)(x_1),
\end{align*}
showing that the first marginal of $\gamma \ast (\pushforward{(\Id\times\Id)}\Q)$ is $\P_1\ast\Q$.
Analogously, the second marginal of $\gamma \ast (\pushforward{(\Id\times\Id)}\Q)$ is $\P_2\ast\Q$.
\end{proof}

\begin{proof}[Proof of~\cref{prop:W:convolution}]\renewcommand{\qedsymbol}{}
The result follows from \cref{lemma:coupling:convolution} and \cref{assump:transportation:cost:conv}(i):
\begin{align*}
     &\transportCost{c}{\P_1\ast \Q}{\P_2\ast  \Q} 
     \\&= 
     \underset{\gamma \in \setPlans{\P_1\ast \Q}{\P_2\ast \Q}}{\inf} \int_{\reals^ n\times \reals^ n} c(x_1-x_2) \d {\gamma (x_1,x_2)}
     \\ &\leq
     \underset{\gamma \in \setPlans{\P_1}{\P_2}\ast \pushforward{(\Id\times\Id)}\Q}{\inf} \int_{\reals^ n\times \reals^ n} c(x_1-x_2)  \d {\gamma (x_1,x_2)}
     \\ &= 
     \underset{\tilde\gamma \in \setPlans{\P_1}{\P_2}}{\inf}  \int_{\reals^ n\times \reals^ n} c(x_1-x_2)  \d {(\tilde\gamma \ast \pushforward{(\Id\times\Id)}\Q)(x_1,x_2)}
     \\ &= 
     \underset{\tilde\gamma \in \setPlans{\P_1}{\P_2}}{\inf}  \int_{\reals^ n\times \reals^ n \times \reals^ n} c(y_1\!+z\!-\!y_2\!-\!z)  \d {(\tilde\gamma \otimes\Q)(y_1,y_2,z)}
     \\ &= 
     \underset{\tilde\gamma \in \setPlans{\P_1}{\P_2}}{\inf}  \int_{\reals^ n\times \reals^ n \times \reals^ n} c(y_1-y_2)  \d {(\tilde\gamma \otimes \Q)(y_1,y_2,z)}
     \\ &=
     \underset{\tilde\gamma \in \setPlans{\P_1}{\P_2}}{\inf}  \int_{\reals^ n\times \reals^ n} c(y_1-y_2)  \d {\tilde\gamma(y_1,y_2)}
     =
     \transportCost{c}{\P_1}{\P_2}.
     \hspace{0.2cm}\square\qedhere 
\end{align*}
\end{proof}

\begin{proof}[Proof of~\cref{thm:conv:trans}]
To start, \cref{assump:transportation:cost:conv}(ii) implies that $\transportCost{c}{\cdot}{\cdot}^{\frac{1}{p}}$ satifies triangle inequality. The proof follows analogously to the one of triangle inequality of the type-$p$ Wasserstein distance~\cite[Lemma 5.4]{Santambrogio2015}.
Then, let $\tilde{\P} \in \ball{\varepsilon_1}{c}{\P}$ and $\tilde{\Q} \in  \ball{\varepsilon_2}{c}{\Q}$. This implies that $\transportCost{c}{\tilde{\P}}{\P} \leqslant \varepsilon_1$ and $\transportCost{c}{\tilde{\Q}}{\Q} \leqslant \varepsilon_2$. Then, 
\begin{equation*}
\begin{split}
    \transportCost{c}{\tilde{\P}\ast\tilde{\Q}}{\P\ast \Q}^\frac{1}{p}  
    &\leq
    \transportCost{c}{\tilde{\P}\ast\tilde{\Q}}{\P\ast \tilde{\Q}}^\frac{1}{p} +   \transportCost{c}{\P\ast\tilde{\Q}}{\P\ast \Q}^\frac{1}{p}
    \\
    \overset{\text{Prop.~\ref{prop:W:convolution}}}&{\leq}
    \transportCost{c}{\tilde{\P}}{\P}^\frac{1}{p} +   \transportCost{c}{\tilde{\Q}}{ \Q}^\frac{1}{p}
    \leq 
    \varepsilon_1^\frac{1}{p} + \varepsilon_2^\frac{1}{p},
\end{split}
\end{equation*}
where the first inequality follows from the triangle inequality. Thus, $\transportCost{c}{\tilde{\P}\ast\tilde{\Q}}{\P\ast \Q}\leq (\varepsilon_1^{1/p} + \varepsilon_2^{1/p})^p$ and \eqref{thm:conv:trans:upperbound} follows.
\end{proof}

\begin{proof}[Proof of~\cref{lemma:coupling:Hadamard}]
Let $\gamma \in \setPlans{\P_1}{\P_2}$. We want to show that $\gamma \odot (\pushforward{(\Id\times\Id)}\Q)$ is a coupling between $\P_1\odot\Q$ and $\P_2\odot\Q$. For any Borel and bounded test function $\phi:\reals^n \to \reals$, we have
\begin{align*}
    &\int_{\reals^n \times \reals^n} \phi(x_1) \d (\gamma \odot \pushforward{(\Id\times\Id)}\Q)(x_1,x_2)
    \\&=
    \int_{\reals^n \times \reals^n} \phi(\pi_{1}(x_1,x_2)) \d (\gamma \odot \pushforward{(\Id\times\Id)}\Q)(x_1,x_2)
    \\
    \overset{\text{Def.~\ref{def:Hadamard}}}&{=}
    \int_{\reals^n \times \reals^n \times \reals^n \times \reals^n} \phi(\pi_{1}(y_1 \cdot z_1,y_2 \cdot z_2)) 
    \\ &\hspace{3cm}\d (\gamma\otimes \pushforward{(\Id\times\Id)}\Q)(y_1,y_2,z_1,z_2)
    \\ &=
    \int_{\reals^n \times \reals^n} \phi(y_1 \cdot z_1) \d (\P_1\otimes \Q)(y_1,z_1)
    \\
    \overset{\text{Def.~\ref{def:Hadamard}}}&{=}
    \int_{\reals^n} \phi(x_1) \d (\P_1\odot \Q)(x_1),
\end{align*}
showing that the first marginal of $\gamma \odot (\pushforward{(\Id\times\Id)}\Q)$ is $\P_1\odot\Q$.
Analogously, the second marginal is $\P_2\odot\Q$.
\end{proof}

\begin{proof}[Proof~\cref{prop:W:Hadamard}]\renewcommand{\qedsymbol}{}
The result follows from \cref{lemma:coupling:Hadamard} and~\cref{assump:transportation:cost:Hadamard}(i):
\begin{align*}
     &\transportCost{c}{\P_1\odot \Q}{\P_2\odot  \Q} 
     \\&= 
     \underset{\gamma \in \setPlans{\P_1\odot \Q}{\P_2\odot \Q}}{\inf} \int_{\reals^n\times \reals^n} c(x_1-x_2) \d {\gamma (x_1,x_2)}
     \\ &\leq
     \underset{\gamma \in \setPlans{\P_1}{\P_2}\odot \pushforward{(\Id\times\Id)}\Q}{\inf} \int_{\reals^n\times \reals^n} c(x_1-x_2)  \d {\gamma (x_1,x_2)}
     \\ &= 
     \underset{\tilde\gamma \in \setPlans{\P_1}{\P_2}}{\inf}  \int_{\reals^n\times \reals^n} c(x_1-x_2)  \d {(\tilde\gamma \odot \pushforward{(\Id\times\Id)}\Q)(x_1,x_2)}
     \\ &= 
     \underset{\tilde\gamma \in \setPlans{\P_1}{\P_2}}{\inf}  \int_{\reals^n\times \reals^n \times \reals^n\times \reals^n} c(y_1\cdot z_1-y_2\cdot z_2) 
     \\&\hspace{3.5cm} \d {(\tilde\gamma \otimes \pushforward{(\Id\times\Id)}\Q)(y_1,y_2,z_1,z_2)}
     \\ &= 
     \underset{\tilde\gamma \in \setPlans{\P_1}{\P_2}}{\inf}  \int_{\reals^n\times \reals^n \times \reals^n} c(y_1\cdot z-y_2\cdot z)  \d {(\tilde\gamma \otimes\Q)(y_1,y_2,z)}
     \\ &\leq 
     \underset{\tilde\gamma \in \setPlans{\P_1}{\P_2}}{\inf}  \int_{\reals^n\times \reals^n \times \reals^n}  c(y_1-y_2)  c(z) \d {(\tilde\gamma \otimes \Q)(y_1,y_2,z)}
     \\ &= 
     \mathbb{E}_{\Q}[c(x)]\underset{\tilde\gamma \in \setPlans{\P_1}{\P_2}}{\inf}  \int_{\reals^n\times \reals^n}  c(y_1-y_2)  \d {\tilde\gamma(y_1,y_2)}.
     \hspace{1.1cm}\square\qedhere 
\end{align*}
\end{proof}

\begin{proof}[Proof of~\cref{thm:Hadamard}]
As in the proof of~\cref{thm:conv:trans},~\cref{assump:transportation:cost:Hadamard}(ii) ensures triangle inequality for $\transportCost{c}{\cdot}{\cdot}^\frac{1}{p}$. Then, let $\tilde{\P} \in \ball{\varepsilon_1}{c}{\P}$ and $\tilde{\Q} \in  \ball{\varepsilon_2}{c}{\Q}$; i.e., $\transportCost{c}{\tilde{\P}}{\P} \leqslant \varepsilon_1$ and $\transportCost{c}{\tilde{\Q}}{\Q} \leqslant \varepsilon_2$. Then,
\begin{align*}
    &\hspace{-.5cm}\transportCost{c}{\tilde{\P}\odot\tilde{\Q}}{\P\odot \Q}^\frac{1}{p} 
    \\
    &\leq
    \transportCost{c}{\tilde{\P}\odot\tilde{\Q}}{\P\odot \tilde{\Q}}^\frac{1}{p} +   \transportCost{c}{\P\odot\tilde{\Q}}{\P\odot \Q}^{\frac{1}{p}}
    \\
    \overset{\text{Prop.~\ref{prop:W:Hadamard}}}&{\leq}
    \transportCost{c}{\tilde{\P}}{\P}^\frac{1}{p}\mathbb{E}_{\tilde\Q}[c(x)]^\frac{1}{p}
    +
    \transportCost{c}{\tilde{\Q}}{ \Q}^\frac{1}{p} \mathbb{E}_{\P}[c(x)]^\frac{1}{p}
    \\
    &\leq 
    \varepsilon_1^\frac{1}{p}\transportCost{c}{\tilde\Q}{\diracDelta{0}}^\frac{1}{p}
    +
    \varepsilon_2^\frac{1}{p}\mathbb{E}_{\P}[c(x)]^\frac{1}{p}
    \\& \leq
    \varepsilon_1^\frac{1}{p}  \left(\transportCost{c}{\tilde{\Q}}{\Q}^\frac{1}{p}+ \transportCost{c}{\Q}{\delta_0}^\frac{1}{p} \right)+   \varepsilon_2^\frac{1}{p} \mathbb{E}_{\P}[c(x)]^\frac{1}{p}
    \\
    &\leq
    \varepsilon_1^\frac{1}{p}  \left(\varepsilon_2^\frac{1}{p}+ \mathbb{E}_{\Q}[c(x)]^\frac{1}{p} \right)+  \varepsilon_2^\frac{1}{p}  \mathbb{E}_{\P}[c(x)]^\frac{1}{p}
    \\
    &=
    \varepsilon_1^\frac{1}{p} \varepsilon_2^\frac{1}{p} +\varepsilon_1^\frac{1}{p}  \mathbb{E}_{\Q}[c(x)]^\frac{1}{p} + \varepsilon_2^\frac{1}{p} \mathbb{E}_{\P}[c(x)]^\frac{1}{p},
\end{align*}
where the first and the fourth inequality follow from the triangle inequality of the \gls{acr:ot} discrepancy.
This establishes~\eqref{eq:thm:hadamard}. 
\end{proof}

\subsection{Proofs for~\cref{sec:applications}}

\begin{proof}[Proof of~\cref{prop:applications:propagation:linear}]
We prove the three cases separately.

1) Since $x_t=A^t x_0+\mathbf{B}_{t-1} \mathbf{u}_{[t-1]}$ and the transportation cost satisfies~\cref{assump:transportation:cost}, the result follows from~\cref{thm:lin:trans}.

2) Since $x_{t} = A^t x_0 + \mathbf{B}_{t-1} \mathbf{u}_{[t-1]} + \mathbf{D}_{t-1} \mathbf{w}_{[t-1]}$ and the transportation cost satisfies~\cref{assump:transportation:cost}, the result follows from~\cref{thm:lin:trans}.

3)  We prove the statement by induction. The base case $t=0$ is trivial. Suppose then the statement holds for $t\in\naturals$. Then, since the element-wise product induces the Hadamard product of distributions and the sum induces the convolution of distributions, we have
$
    \Q_{t+1}=(\Q^{(1)}\odot \pushforward{A}\Q_{t})\ast(\Q^{(2)}\odot\diracDelta{Bu_{t}}).
$
Since $\Q_{t}\in\ball{\rho_t}{\norm{\cdot}_2^2}{\P_t}$, we have that $\Q_{t+1}$ belongs to
\begin{align*}
    &\left(\ball{\varepsilon_1}{\norm{\cdot}_2^2}{\P^{(1)}}\odot\pushforward{A}\ball{\rho_t}{\norm{\cdot}_2^2}{\P_t}\right)
    \ast\left(\ball{\varepsilon_2}{\norm{\cdot}_2^2}{\P^{(2)}}\odot\diracDelta{Bu_{t}}\right)
    \\
    &\hspace{-0.1cm}\subset\!\!
    \left(\ball{\varepsilon_1}{\norm{\cdot}_2^2}{\P^{(1)}}\odot\ball{\rho_t}{\norm{\cdot}_2^2\circ\pinv{A}}{\pushforward{A}\P_t}\right)\!
    \ast\!\left(\ball{\varepsilon_2}{\norm{\cdot}_2^2}{\P^{(2)}}\odot\diracDelta{Bu_{t}}\right)
    \\
    &\hspace{-0.1cm}\subset\!\!
    \left(\ball{\varepsilon_1}{\norm{\cdot}_2^2}{\P^{(1)}}\odot\ball{\rho_t\sigma_\mathrm{\max}(A)^2}{\norm{\cdot}_2^2}{\pushforward{A}\P_t}\right)\!\ast\!\left(\ball{\varepsilon_2}{\norm{\cdot}_2^2}{\P^{(2)}}\odot\diracDelta{Bu_{t}}\right)
    \\
    &\hspace{-0.1cm}\subset \!\!
    \begin{aligned}[t]
    &\ball{(\!\sqrt{\varepsilon_1 \rho_t}\sigma_\mathrm{\max}(\!A)+\sqrt{\rho_tM_{\P^{(1)}}}\sigma_\mathrm{\max}(\!A)+\sqrt{\varepsilon_1M_{\pushforward{A}\P_t}})^2}{\norm{\cdot}_2^2}{\P^{(1)}\!\odot\!
    \pushforward{A}\P_t}
    \\
    &\ast\ball{\varepsilon_2\norm{Bu_t}^2}{\norm{\cdot}_2^2}{\P^{(2)}\odot\diracDelta{Bu_{t}}} \subset
    \ball{\rho_{t+1}}{\norm{\cdot}_2^2}{\P_{t+1}}.
    \end{aligned}
\end{align*}
where the first inclusion follows~\cref{thm:lin:trans}, the second from $\ball{\varepsilon}{\norm{\cdot}_2^2\circ\pinv{A}}{\pushforward{A}\P_t}\subset\ball{\varepsilon\sigma_\mathrm{\max}(A)}{\norm{\cdot}_2^2}{\pushforward{A}\P_t}$ (since $\norm{\cdot}_2^2\circ\pinv{A}\geq \sigma_\mathrm{min}(\pinv{A})^2\norm{\cdot}_2^2=\sigma_\mathrm{max}(A)^{-2}\norm{\cdot}_2^2$),
the third from \cref{thm:Hadamard} and \cref{cor:hadamard:known:noise} (since $\norm{\cdot}_2^2$ satisfies \cref{assump:transportation:cost:Hadamard} with $p=2$), and the last one from~\cref{thm:conv:trans} (since $\norm{\cdot}_2^2$ satisfies \cref{assump:transportation:cost:conv} with $p=2$). 
\end{proof}

\begin{proof}[Proof of~\cref{prop:app:consensus}]
We first claim that $\Q_t \to \Q_\infty\coloneqq\pushforward{h} \Q_0$, with $h$ as in the statement. If this is true, $\Q_\infty \in \pushforward{h} \ball{\varepsilon}{c}{\P_0}$, and the result follows from \cref{thm:lin:trans}. Since the transformation $x \mapsto \T{w} x$ is surjective and $\norm{\cdot}^2_2$ is translation-invariant and orthomonotone,~\eqref{eq:B_epsilon:surjective:A:1} implies that
\begin{align*}
    \pushforward{\T{w}} \ball{\varepsilon}{\norm{\cdot}_2^2}{\P_0} = \ball{\varepsilon}{\norm{\cdot}_2^2 \circ \pinv{(\T w)}}{\pushforward{\T{w}} \P_0},
\end{align*}
with $\pinv{(\T w)} = w(\T{w} w)^{-1}=w/\norm{w}_2^2$ being the right inverse of $\T w$. Notice that $\ball{\varepsilon}{\norm{\cdot}_2^2 \circ w/\norm{w}_2^2}{\pushforward{\T w} \P_0}$ is an \gls{acr:ot} ambiguity set of probability distributions over $\reals$. Then, $\pushforward{h} \Q_0 \in \pushforward{\ones_n}\ball{\varepsilon}{\norm{\cdot}_2^2 \circ w/\norm{w}_2^2}{\pushforward{\T{w}} \P_0}$.
We now prove $\Q_t \to \pushforward{h} \Q_0$. A standard modal decomposition of $A$ gives $A^t x \to h(x) = (\T{w} x) v$ for all $x\in\reals^n$. For any continuous and bounded test function $\phi: \reals^n \to \reals$, dominated convergence gives
$
    \int_{\reals^n} \phi(x) \d\Q_t(x) 
    = 
    \int_{\reals^n} \phi(A^tx) \d\Q_0(x)
    \to
    \int_{\reals^n} \phi(h(x)) \d\Q_0(x)
    =
    \int_{\reals^n} \phi(x) \d(\pushforward{h}\Q_0)(x).
$
Then, \eqref{eq:app:consensus:one} follows from
$(\norm{\cdot}_2^2\circ \frac{w}{\norm{w}^2})(x-y)
=\|\frac{w}{\norm{w}^2}x-\frac{w}{\norm{w}^2}y\|_2^2
=\frac{\abs{x-y}^2}{\norm{w}^2_2}$ and 
the definition of \gls{acr:ot} ambiguity set. If additionally $w=\frac{1}{n}\ones_n$, $\norm{w}_2^2=\frac{1}{n}$, which establishes~\eqref{eq:app:consensus:two}. 
\end{proof}

\begin{proof}[Proof of~\cref{prop:leastsquares}]
By~\eqref{eq:least:squares:estimator}, $\hat{x} = \pinv{A} y = \pinv{A} (A x_0 + z) = x_0 + \pinv{A} z$ or, equivalently, $\hat{x} - x_0 = \pinv{A} z$, with $\pinv{A} = \inv{(\T A A)}\T A$ being the left inverse of $A$. Therefore, $\hat{x} - x_0$ is distributed as $\pinv{A} z$, i.e., $\Q = \pushforward{\pinv{A}}\P$. Since $A$ has full column-rank,  $\pinv{A}$ has full row-rank. Therefore, the equality \eqref{eq:B_epsilon:surjective:A:1} from~\cref{thm:lin:trans} yields
\begin{align*}
    \Q
    =
    \pushforward{\pinv{A}} \P 
    \in \pushforward{\pinv{A}} \ball{\varepsilon}{\norm{\cdot}_2^2}{\widehat\P}
    =
    \ball{\varepsilon}{\norm{\cdot}_2^2 \circ A}{\pushforward{\pinv{A}} \widehat\P},
\end{align*}
where we used that $A$ is the right inverse of $\pinv{A}$. 
\end{proof}


\end{document}